%% file: zonotopal-magic.tex
\documentclass[11pt,a4paper,reqno]{amsart}
\usepackage[margin=1.25in]{geometry}
\usepackage{amsmath, amsxtra, amsthm, amsfonts, amssymb, mathtools}
\usepackage{mathrsfs}
\usepackage{graphicx}
\usepackage{float}
\usepackage{url}
\usepackage[dvipsnames]{xcolor}
\usepackage{bbm}
\usepackage{bm}
\usepackage{tikz-cd}
\usepackage{tikz}
\usetikzlibrary{backgrounds}
\usepackage[cal=boondoxo,scr=euler]{mathalfa}
\usepackage{float}
\usetikzlibrary{decorations.pathreplacing, positioning}
\usetikzlibrary{decorations.markings}
\usetikzlibrary{shapes,positioning,intersections,quotes}
\usetikzlibrary{shapes,fit}
\usetikzlibrary{calc}
\usetikzlibrary{backgrounds, fit, calc, positioning, matrix, arrows.meta}
\usepackage{comment}
\usepackage[shortlabels]{enumitem} 
\usepackage{enumitem}
\usepackage{relsize}
\usepackage{stmaryrd}
\usepackage[all,cmtip]{xy}
\xyoption{arrow}
\usepackage[T1]{fontenc}
\usepackage{enumitem}
\usepackage{ytableau}

\usepackage[backref=page,linktocpage,hypertexnames=false]{hyperref}
\hypersetup{
    colorlinks,
    allcolors=teal
}

\usepackage[capitalize]{cleveref}

\newtheoremstyle{results}
{8pt}
{8pt}
{}
{}
{\bfseries}
{}
{.5em}
{}
\theoremstyle{results}
\newtheorem{thm}{Theorem}[section]
\newtheorem{proposition}[thm]{Proposition}
\newtheorem{corollary}[thm]{Corollary}
\newtheorem{lemma}[thm]{Lemma}

\newtheoremstyle{definitions}
{7pt}
{7pt}
{}
{}
{\bfseries}
{}
{.5em}
{}

\theoremstyle{definitions}
\newtheorem{definition}[thm]{Definition}

\newtheorem{example}[thm]{Example}

\newtheorem{remark}[thm]{Remark}

\crefname{thm}{theorem}{theorems}
\Crefname{thm}{Theorem}{Theorems}
\crefname{lemma}{lemma}{lemmas}
\Crefname{lemma}{Lemma}{Lemmas}
\crefname{proposition}{proposition}{propositions}
\Crefname{proposition}{Proposition}{Propositions}
\crefname{corollary}{corollary}{corollaries}
\Crefname{corollary}{Corollary}{Corollaries}
\crefname{definition}{definition}{definitions}
\Crefname{definition}{Definition}{Definitions}
\crefname{conjecture}{conjecture}{conjectures}
\Crefname{conjecture}{Conjecture}{Conjectures}
\crefname{example}{example}{examples}
\Crefname{example}{Example}{Examples}
\crefname{remark}{remark}{remarks}
\Crefname{remark}{Remark}{Remarks}

\def\Z{\mathbb{Z}}
\def\Q{\mathbb{Q}}
\def\R{\mathbb{R}}
\def\C{\mathbb{C}}

\def\P{\mathbb{P}}

\def\Z{\mathbb{Z}}

\def\calA{\mathcal{A}}
\def\calB{\mathcal{B}}

\def\calF{\mathcal{F}}

\def\calI{\mathcal{I}}

\def\calL{\mathcal{L}}

\def\calO{\mathcal{O}}

\newcommand{\abs}[1]{\left\lvert#1\right\rvert}

\newcommand{\B}{\textrm{B}}

\newcommand{\cl}{\textrm{cl}}

\newcommand{\G}{\textrm{G}}

\renewcommand{\Im}{\textrm{Im}}

\newcommand{\Pic}{\operatorname{Pic}}

\newcommand{\rk}{\operatorname{rk}}

\newcommand{\M}{\mathrm{M}}
\renewcommand{\emptyset}{\varnothing}

\renewcommand{\L}{\mathcal{L}}

\newcommand{\des}{\mathrm{des}}

\renewcommand{\G}{\mathcal{G}}

\usepackage{microtype}   

\newtheorem{theorem}[thm]{Theorem}
\crefname{theorem}{theorem}{theorems}
\Crefname{theorem}{Theorem}{Theorems}

\DeclareMathOperator{\supp}{supp}

\newcommand{\hstar}{h^{*}}
\newcommand{\cL}{\mathcal{L}}
\newcommand{\cO}{\mathcal{O}}
\newcommand{\Lines}{\mathcal{L}_2}
\newcommand{\DT}{\mathrm{T}}      
\newcommand{\capa}{\kappa}   
\newcommand{\dem}{\delta}    
\newcommand{\U}{\mathrm{U}}

\newcommand{\DHR}{\mathcal{D}}
\newcommand{\Cer}{\mathcal{C}}
\newcommand{\Mbar}{\overline{\mathcal M}}
\newcommand{\Zg}{\mathrm{Z}}
\newcommand{\W}{\mathcal{W}}      

\definecolor{gA}{named}{TealBlue}
\definecolor{gB}{named}{Violet}
\definecolor{gC}{named}{BurntOrange}
\colorlet{tAC}{gA!50!gC}
\colorlet{tAB}{gA!50!gB}
\colorlet{tBC}{gB!50!gC}

\newif\ifdraft \drafttrue

\usepackage{multirow,booktabs,array,graphicx}
\newlength{\blockht}

\input{parallelohedra}

\title{Magic positivity of Snapper polynomials \\ for matroids}
\author{Shiyue Li}
\address{Department of Mathematics, University of Michigan, Ann Arbor, Michigan 48109, USA}
\email{shiyueli\@umich.edu}

\begin{document}
\begin{abstract}
  In 1959, Snapper showed that the Euler characteristic of the tensor powers of a line bundle on a normal projective scheme is a polynomial, later named the \emph{Snapper polynomial}. 
  Positivity of coefficients of Snapper polynomials implies various notions of positivity of line bundles, which we study through the lens of magic positivity and real-rootedness. 
  We introduce zonotopal classes in the Grothendieck $K$-ring of vector bundles of the toric variety for any loopless matroid, and
  prove that their Snapper polynomials are magic positive. Our proof realizes such a Snapper polynomial as a weighted independence polynomial of the Dilworth truncation along certain lines of the matroid. As a consequence,
  their coefficients are positive, and their $h^{\ast}$-polynomials are real-rooted. 
  In the realizable case, this polynomial is the multigraded Hilbert polynomial of the wonderful variety embedded in a product of
  projective lines.

  We introduce analogous line bundles on the Deligne--Mumford--Knudsen moduli space $\overline{\mathcal M}_{0,n}$ and prove that their Snapper polynomials are magic positive. 
  For cotangent line bundles whose first Chern classes are distinct $\psi$-classes, which are not zonotopal, we nonetheless prove that their $h^{\ast}$-polynomials are real-rooted, whereas their Snapper polynomials are magic positive if and only if $n\leqslant7$.
  More generally, we introduce saturated and weakly saturated $K$-classes of matroids, which furnish a sufficient and a necessary condition
  for magic positivity of Snapper polynomials in terms of their dragon Hall--Rado polymatroids.
\end{abstract}
\maketitle
\raggedbottom

\setcounter{tocdepth}{1}
\tableofcontents

\section{Introduction}
\label{sec:intro}
One of the earliest results on the numerical invariants of line bundles is the
following theorem of Snapper in 1959 \cite{snapper1959multiples}:
\begin{quote}\itshape
  If $X$ is a normal projective scheme of dimension $d$ over an arbitrary field,
  together with any line bundles $\cL_1, \dots, \cL_k$, then the Euler
  characteristic
  $\chi\big(X, \cL_1^{\otimes x_1} \otimes \cdots \otimes \cL_k^{\otimes x_k}\big)$
  is a polynomial in $\Q[x_1, \dots, x_k]$ of total degree at most $d$.
\end{quote}
Soon after in 1960, Snapper defined the intersection number of line bundles using this polynomial, 
\cite[\S 4]{snapper1960polynomials}, later named \emph{Snapper polynomial}. It recovers Hilbert polynomials, virtual arithmetic
genera, intersection numbers, and volume polynomials. 

Positivity of line bundles can be viewed as positivity of the coefficients of a Snapper polynomial.  For instance, the numerical
characterization of ampleness, discovered by Nakai for smooth surfaces
\cite{nakai1963criterion} and developed for complete algebraic
varieties through the works of Grothendieck, Kleiman, Moi\v{s}ezon, Mumford, Nakai
and Zariski, can be phrased as follows \cite[Chapter~III]{kleiman1966ampleness}:
\begin{quote}\itshape
  A line bundle $\cL$ is ample if and only if the Snapper polynomial of $\cL|_Y$
  has positive leading coefficient for every closed integral subscheme
  $Y \subseteq X$.
\end{quote}

We investigate two positivity properties surrounding Snapper polynomials: magic positivity and
real-rootedness, which contain many $K$-theoretic positivity properties. Real-rootedness of a polynomial with nonnegative coefficients is widely regarded as the strongest positivity property, and it implies many other positivity properties such as log-concavity and unimodality. 
Yet another very strong positivity property is magic positivity. A polynomial $f \in R[q]$ is \emph{magic positive}, if
\[
  f(q)=\sum_{i=0}^d m_i q^i(1+q)^{d-i},\qquad m_i\geqslant0.
\]
By a theorem of Br\"and\'en \cite[Theorem 4.2]{branden2006linear}, and also by 
Beck, Jochemko and McCullough \cite[Corollary 4.5]{beck2019h}, 
if $f(q)$ is magic positive, then the Wagner transform $\W f(t)$ of $f$, defined by
\begin{equation}\label{eq:W}
 \sum_{q\geqslant0}f(q)t^q=\frac{(\W f)(t)}{(1-t)^{d+1}}, 
\end{equation} is a polynomial with only real zeros. 
As a consequence, the coefficients of $f$ are nonnegative, and those of $\W f$ are log-concave, unimodal. 
We refer to the beautiful handbook by \cite{branden2015unimodality} for further details. For a landscape of positivity properties of polynomials relevant to our paper, see \Cref{fig:positivity-hierarchy}.

\begin{figure}[H]
  \centering
  \newcommand{\prov}[1]{{\scriptsize\itshape\textcolor{black!55}{#1}}}
  \scalebox{0.81}{%
  \begin{tikzpicture}[
      node distance = 0.9cm and 2.1cm,
      rectan/.style = {draw=black!55, rounded corners=3pt, align=center, inner sep=4pt,
                       font=\small, minimum height=10mm, minimum width=26mm},
      hot/.style    = {rectan, draw=gA!75!black, fill=gA!13, line width=0.7pt},
      lab/.style    = {font=\scriptsize, inner sep=1pt, anchor=south},
      arrow/.style  = {-{Stealth[length=5pt,width=4.5pt]}, double, double distance=1.2pt,
                       shorten >=2pt, shorten <=2pt}]

    \node (magic)  [hot] {Magic positivity\\of $f$};
    \node (rrn)    [rectan, right=of magic, minimum width=28mm]
                   {Nonnegativity and\\real-rootedness\\of $\W f$};
    \node (ehr)    [rectan, above=of rrn] {Nonnegativity\\of $f$};
    \node (lc)     [rectan, right=of rrn, yshift=1.5cm, xshift=0.05cm]  {Log-concavity of $\W f$};
    \node (gp)     [rectan, right=of rrn, yshift=-1.5cm, xshift=0.4cm] {$\gamma$-positivity of $\W f$};
    \node (uni)    [rectan, right=of lc, yshift=-1.5cm, xshift=-0.8cm] {Unimodality of $\W f$};

    \draw[arrow] (magic) |- (ehr);
    \draw[arrow] (magic) -- (rrn);
    \draw[arrow] (rrn.east) -- ++(0.5,0) |- (lc.west);
    \draw[arrow] (rrn.east) -- ++(0.5,0) |- (gp.west);
    \draw[arrow] (lc.east) -- ++(0.5,0) |- (uni.west);
    \draw[arrow] (gp.east) -- ++(0.5,0) |- (uni.west);

    \node[lab] at ($(ehr.west)+(-1.0,0.06)$) {};
    \node[lab] at ($(rrn.west)+(-1.05,0.06)$) {Br\"and\'en};
    \node[lab] at ($(lc.west) +(-1.0,0.06)$) {Newton};
    \node[lab] at ($(gp.west)+(-1.4,-0.5)$) {\prov{(if palindromic)}};
  \end{tikzpicture}}
  \caption{Positivity properties of a polynomial $f$ and $h(f; t)$}
  \label{fig:positivity-hierarchy} 
\end{figure}

We first focus on Snapper polynomials for wonderful varieties and matroids. Let $\M$ be a loopless matroid. Larson, Payne and
Proudfoot, and the author, defined its $K$-ring $K(\M)$ as the Grothendieck $K$-ring of vector bundles of the smooth and generally non-complete toric variety associated to the Bergman fan of $\M$, together with an Euler characteristic map
$\chi(\M,-)\colon K(\M)\to\Z$ \cite{larson2024k}. When $\M$ is realizable,
these agree with the Grothendieck $K$-ring of vector bundles, and the sheaf Euler characteristic of the smooth and projective wonderful variety of $\M$ introduced by \cite{deconcini1995wonderful}. Every lattice generalized permutohedron $P\subseteq\R^E$
defines a nef line bundle class $\cL_P\in K(\M)$. Its \emph{Snapper polynomial}
is
\[
  \chi_{\M,P}(q)=\chi\big(\M,\cL_P^{\otimes q}\big).
\]
If $\M$ is a Boolean matroid, $\chi_{\M,P}(q)$ is the Ehrhart polynomial of $P$.

Eur and Larson \cite{eur2023k} define the
$\hstar$-polynomial by
\begin{equation}\label{eq:hstar-def}
  \sum_{q\geqslant0}\chi_{\M,P}(q)t^q
  =\frac{\hstar_{\M,P}(t)}{(1-t)^{d+1}},
  \qquad
  \hstar_{\M,P}(t)=\sum_{i=0}^d\hstar_i t^i, 
\end{equation}
where $d=\deg\chi_{\M,P}$. Equivalently,
\begin{equation}
  \label{eq:Wf-change-of-basis}
  \chi_{\M,P}(q)=\sum_{i=0}^d\hstar_i\binom{q+d-i}{d}.
\end{equation}
If $\M$ is Boolean, then $\hstar_{\M,P}(t)$ becomes the $\hstar$-polynomial of $P$. For any matroid, 
Eur and Larson proved that the $\hstar$-vector for any simplicially nonnegative polytopes is the
Hilbert function of a standard graded Artinian algebra, and in particular, has only nonnegative coefficients. 
Recently, Eur, Fink, and Larson extended the result to any lattice generalized permutohedron. See \cite[Theorem 1.5]{eur2023k}, and \cite[Theorem B]{eurfinklarson2025}.

Our first aim is to introduce a family of \emph{zonotopal classes} of line bundles $\L_P$ in $K(\M)$, and to prove that the Snapper polynomials $\chi_{\M, P}(q)$ are magic positive. 
Every lattice generalized permutohedron $P \subseteq \R^{n}$ is uniquely a signed Minkowski sum of standard simplices with integer coefficients
\cite[Proposition 2.4]{ardila2010matroid}, up to translation: 
\[
  P=\sum_{\varnothing\ne S\subseteq E}y_S\Delta_S,
  \qquad \Delta_S=\operatorname{conv}\{e_i:i\in S\},
  \qquad y_S\in\Z. 
\]
The line bundle $\cL_{P}$ is nef, if the coefficients are nonnegative. 
For each flat $F$ of $\M$ rank at least $2$, we set 
\[
  \omega_{\M, P}(F) \coloneqq \sum_{\cl_{\M}(S)=F}y_S,
  \qquad \omega_{\M, P}(I) \coloneqq \prod_{F\in I}\omega_{\M, P}(F),
  \qquad \omega_{\M, P}(\varnothing) \coloneqq 1.
\]
For flats of rank $1$, the line bundles are trivial, so we omit them from the weights
and their support throughout. 

A line bundle $\L_{P}$ in $K(\M)$ is \emph{zonotopal}, if the weights $\omega_{\M, P}$ are nonnegative and supported on flats of rank $2$, or the \emph{lines} of $\M$. The corresponding $K$-class is a \emph{zonotopal class}. Zonotopal classes include $\cL_{P}$ for every graphical zonotope
\[
  P=\sum_{i<j}y_{ij}\Delta_{ij}\subseteq\R^E,
  \qquad y_{ij}\in\Z_{\geqslant0}, \qquad \Delta_{ij}=\operatorname{conv}\{e_i,e_j\}, 1 \leqslant i < j \leqslant n. 
\]
They also include classes represented by polytopes that are not zonotopes
(\Cref{ex:zonotopal-pair}).

\begin{theorem}[\Cref{thm:magic-zonotopal}]
  \label{thm:main}
  Let $\M$ be a loopless matroid and $\cL_P$ a zonotopal class in $K(\M)$.
  Then $\chi_{\M,P}(q)$ is magic positive. In particular,
  $\hstar_{\M,P}(t)$ only has real zeros. 
\end{theorem}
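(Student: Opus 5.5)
The plan is to realize $\chi_{\M,P}(q)$ explicitly as a weighted independence polynomial of the Dilworth truncation of $\M$ along its lines, and then read off magic positivity from that formula. Concretely, I would aim for
\[
  \chi_{\M,P}(q)=\sum_{I}\omega_{\M,P}(I)\,q^{|I|}(1+q)^{\,d-|I|},
\]
where $I$ ranges over sets of lines of $\M$ that are independent in the Dilworth truncation $\DT$ of $\M$ (the matroid on the lines where $I$ is independent iff every nonempty $J\subseteq I$ satisfies $\rk_\M(\bigcup J)\geqslant |J|+1$), and $d=\rk\M-1$. Since $\omega_{\M,P}(I)\geqslant 0$ for zonotopal classes, each term is of the form $m_i q^i(1+q)^{d-i}$ with $m_i\geqslant0$. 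Real-rootedness of $\hstar_{\M,P}$ then follows from Br\"and\'en's theorem quoted in the introduction, because $\hstar_{\M,P}=\W\chi_{\M,P}$ by \eqref{eq:hstar-def}. If the true degree is smaller than $d$, I would work with $d$ as the formal degree, since magic positivity in a larger formal degree still gives a nonnegative real-rooted numerator.

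\textbf{Step 1: multilinearity.} The key input would be the formula of Larson--Li--Payne--Proudfoot and Eur--Larson for Euler characteristics in $K(\M)$. There, $\chi(\M,\cL_P^{q})$ is expressed via the dragon Hall--Rado type count, as a sum over multisets/sequences of nonempty subsets $S_1,\dots,S_k$ (with multiplicities from the $y_S$) satisfying a Hall--Rado condition relative to $\rk_\M$. I would first use that $\cL_P$ is multiplicative in Minkowski sums, $\cL_{P+Q}=\cL_P\otimes\cL_Q$. This lets me write $\cL_P^{q}=\bigotimes_S\cL_{\Delta_S}^{qy_S}$. Only the closure of $S$ matters, so it can be replaced by the line $F=\cl(S)$, and the total exponent on a line $F$ becomes $q\,\omega_{\M,P}(F)$.

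\textbf{Step 2: reduce to a polymatroid count.} Next I would compute $\chi(\M,\bigotimes_F\cL_F^{a_F})$ for a function $a$ on lines, where $\cL_F$ is the class of a segment-sum supported on $F$. Each $\cL_{\Delta_{ij}}$ should behave like $\cO(1)$ pulled back along a map to $\P^1$, so the Euler characteristic of $\bigotimes\cL_F^{a_F}$ expands as
\[
  \sum_I \prod_{F\in I}\binom{a_F}{1}\cdot(\text{correction}),
\]
which mirrors $\chi(\P^1,\cO(a))=a+1$. In the realizable case the wonderful variety maps to $\prod_F\P^1$, and the claimed formula is a multigraded Hilbert polynomial. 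I would therefore prove the general case combinatorially. Using the Hall--Rado characterization of which monomials $\prod_F a_F^{c_F}$ appear, I expect the degree-$c$ part of $\chi$ to be nonzero only when the support with multiplicities satisfies Hall's condition $\sum_{F\in J}c_F\leqslant\rk(\bigcup J)-1$. Specializing $a_F=q\,\omega(F)$, I then need to show that the answer regroups into $\prod_{F\in I}\omega(F)\,q\,(1+q)^{\dots}$.

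\textbf{Main obstacle.} The hard step is this regrouping. I need to show that the polynomial $\sum_c(\text{Hall--Rado count})\prod_F\binom{a_F}{c_F}$-type expression equals the independence polynomial of $\DT$ with factor $(1+q)^{d-|I|}$. My approach would be deletion--contraction on the largest flat, or induction on the rank via the recursive structure of $K(\M)$: pushing forward along the map forgetting one element, or restricting to divisors $D_F\cong \M_F\times\M^F$. For the base case of rank $2$ (a single line), $\chi=(1+q\,\omega)$, which agrees with $1\cdot(1+q)^1+(\omega-1)q$. This shows the weights must be recorded carefully, with terms $q^{|I|}(1+q)^{d-|I|}$ absorbing the constant in the right way. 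So before committing to the exact normalization I would verify small cases, such as rank $2$ and rank $3$ uniform matroids, and adjust the indexing, for instance by using $\omega(F)-1$ or including the empty set with $(1+q)^d$.
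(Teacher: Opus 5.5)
Your Step 1 and your appeal to the dragon Hall--Rado formula are the right ingredients. However, the identity you aim for, $\chi_{\M,P}(q)=\sum_I\omega(I)\,q^{|I|}(1+q)^{d-|I|}$ with $d=\rk\M-1$, is false, and the ``regrouping'' you call the main obstacle is chasing it. Your own rank-$2$ check gives $1+(1+\omega)q\neq 1+\omega q$. For $P=\Delta_{12}+\Delta_{13}+\Delta_{23}$ over $\B_3$ one has $\chi=1+3q+3q^2$ with magic vector $(1,1,1)$. The weighted face numbers of $\DT(\B_3)=\U_{2,3}$ are $(1,3,3)$, and your $\omega(F)-1$ variant gives $(1+q)^2$, so no termwise reweighting over independent sets works.

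The correct formula needs no regrouping. In $\chi_{\M,P}(q)=\sum_{k\in\DHR(\M)}\prod_F\binom{\omega(F)q+k_F-1}{k_F}$:
\begin{enumerate}
  \item A factor with $k_F>0=\omega(F)$ vanishes.
  \item Taking $J=\{F\}$ in the dragon Hall--Rado condition gives $k_F\leqslant\rk_\M F-1=1$ on lines.
  \item So the surviving $k$ are indicator vectors of sets $I$ of supported lines that are independent in $\DT(\M,P)$, and each contributes $\prod_{F\in I}\binom{\omega(F)q}{1}=\omega(I)q^{|I|}$.
\end{enumerate}
Hence $\chi_{\M,P}(q)=\sum_{I\in\DT(\M,P)}\omega(I)q^{|I|}$. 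This is the $\omega$-weighted $f$-polynomial of the independence complex $\Delta$ of $\DT(\M,P)$, in the monomial basis, with no $(1+q)$ factors. Deletion--contraction or induction on rank is not needed.

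The genuinely missing idea is how to pass from this $f$-polynomial to the magic basis. Since $(1-y)^d f(\Delta,\omega;\tfrac{y}{1-y})=\sum_{I\in\Delta}\omega(I)y^{|I|}(1-y)^{d-|I|}$, the magic vector is the $\omega$-weighted $h$-vector of $\Delta$. Its nonnegativity is not termwise. You need a partition of $\Delta$ into Boolean intervals $[\rho(B),B]$ over the bases $B$, which a shelling of the matroid independence complex supplies. Each interval contributes
\[
  \sum_{\rho(B)\subseteq I\subseteq B}\omega(I)q^{|I|}
  =\omega(\rho(B))\,q^{|\rho(B)|}\prod_{F\in B\setminus\rho(B)}\big((1+q)+(\omega(F)-1)q\big),
\]
which is magic positive in degree $d=|B|$ precisely because $\omega(F)\geqslant 1$ on supported lines. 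So the $(1+q)$-factors you wanted do appear, but grouped by bases and their restriction sets rather than by independent sets.

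This also fixes the degree: $d=\rk\DT(\M,P)$, taken along the supported lines only, not $\rk\M-1$. Your remark that a larger formal degree is harmless is backwards. If $\deg f=d'<d$, the degree-$d$ magic transform is $(1-y)^{d-d'}$ times the degree-$d'$ one, so nonnegativity passes down but not up. For example, $P=\Delta_{12}$ over $\B_3$ gives $\chi=1+q$, whose magic transform in formal degree $2$ is $1-y$. Equivalently, unsupported lines have weight $0$, and including them in the complex produces factors $(1+q)-q$ with a negative magic coefficient.
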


The proof identifies the Snapper polynomial with a weighted independence
polynomial. Let $\DT(\M)$ be the Dilworth truncation along the lines of $\M$:
a set $I$ of lines $\M$ is independent in $\DT(\M)$, if for every nonempty $J \subseteq I$,
\[
  \rk_{\M}\Big(\bigcup_{F\in J}F\Big) \geqslant |J| + 1. 
\]
We write $I \in \M$ when $I$ is independent in $\M$.

\begin{theorem}\label{thm:main-EC-independence}
  If $\cL_{P}$ is a zonotopal class in $K(\M)$, then 
  \[
    \chi_{\M,P}(q)=\sum_{I\in\DT(\M)}\omega_{\M, P}(I)q^{|I|}.
  \]
\end{theorem}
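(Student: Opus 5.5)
We compute $\chi_{\M,P}(q)$ through the formula for Euler characteristics of nef classes in $K(\M)$, and then identify the result with the weighted independence polynomial of $\DT(\M)$. The tools are the Hall--Rado type formula of Larson--Payne--Proudfoot--Li / Eur--Fink--Larson for $\chi(\M,\cL_P^{\otimes q})$, and the identification of the Chow/$K$-ring degrees with Dilworth truncations.

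\textbf{Step 1: reduce to a combinatorial formula.} Write $P=\sum_S y_S\Delta_S$. Via the exceptional isomorphism $K(\M)\to A^\bullet(\M)$ (or the Hirzebruch--Riemann--Roch formula for matroids), the Euler characteristic of $\cL_{qP}$ becomes a sum over subsets. We use the known formula expressing $\chi(\M,\cL_{\sum y_S\Delta_S})$ as a count of tuples: it equals the number of multisets of $y_S$ elements chosen from the sets $S$ (with $q y_S$ copies after scaling) that satisfy a Hall--Rado independence condition in $\M$. The formula for the volume is the dragon Hall--Rado count (Postnikov / Eur--Huh--Larson). Its $K$-theoretic refinement, due to Eur--Larson, says that $\chi$ counts $\M$-independent ``transversals'' in a suitable polymatroid. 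We take this as the starting formula in terms of the polymatroid $\DHR$ built from the data $(S,y_S)$. In the zonotopal case, only $\cl_\M(S)$ matters through $\omega_{\M,P}(F)$, and every relevant closure is a line.

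\textbf{Step 2: expand in $q$.} Each line $F$ carries multiplicity $q\,\omega(F)$. A choice of $k$ copies from a line contributes rank at most $1$ beyond the point, because a line has rank $2$ and a point is trivial. The Euler characteristic should therefore factor as a sum over sets $I$ of lines where each line in $I$ is ``used'' with one of its $q\,\omega(F)$ copies. This would give a sum $\sum_I \prod_{F\in I} q\,\omega(F)$. The constraint on which $I$ appear is the Hall--Rado condition: every $J\subseteq I$ must satisfy $\rk(\bigcup_{F\in J}F)-1\geqslant |J|$. Here the $-1$ comes from the quotient by the diagonal, i.e.\ the $\R^E/\R\mathbf 1$ normalization, equivalently the rank-$1$ flats being trivial. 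This is exactly independence in $\DT(\M)$, so we get $\sum_{I\in\DT(\M)}\omega(I)q^{|I|}$.

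\textbf{Step 3: verify the exact (non-leading) terms.} For the lower-order terms we need the precise $K$-theoretic formula rather than just volumes. The approach is to prove the identity for $q\in\Z_{\geqslant0}$ by induction on $|E|$, using deletion–contraction or restriction to flats. We compare both sides under $\cL_{P}\mapsto\cL_{P+\Delta_S}$ for a two-element-closure set $S$. On the geometric side, the exact sequence $0\to\cL_P\to\cL_{P+\Delta_S}\to\cL_{P+\Delta_S}|_{D}\to0$ for a divisor $D$ associated to the line $F=\cl(S)$ identifies the difference with a Snapper polynomial on the contraction $\M/F$ (times $\M|_F$, whose contribution is $1$ for a rank-$2$ matroid). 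On the combinatorial side, the difference is $\omega(F)$ times the independence polynomial of the lines $I$ containing $F$. Dilworth independence of $I\ni F$ in $\M$ is equivalent to Dilworth independence in $\M/F$ of the images of $I\setminus F$ with the image weights. Since both sides agree at $P=0$ (where $\chi=1$), induction closes.

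\textbf{Main obstacle.} The hard part is Step 3. The images of lines under contraction by $F$ may become points, collapse to loops, or merge into a single line of $\M/F$, so the weight bookkeeping $\omega_{\M/F}$ must be checked to match the sum over lines of $\M$ mapping to it. We also need that the restricted class remains zonotopal on $\M/F$, which requires $\cl$ of images to be rank-$\le2$. I would first try to control this by a direct comparison of the Hall--Rado inequalities for $J\cup\{F\}$ versus $J$ in $\M/F$.
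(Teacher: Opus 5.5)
\textbf{There is a genuine gap.} As written, nothing proves the identity beyond leading order. Step~2 is a heuristic (``should therefore factor''), and you explicitly defer the non-leading coefficients to Step~3, which you never carry out. The missing idea is that the correct starting formula is already exact in every degree of $q$, so no induction is needed. By \Cref{thm:dhr} (Larson--Li--Payne--Proudfoot),
\[
  \chi_{\M,P}(q)=\sum_{k\in\DHR(\M)}\prod_{F}\binom{\omega_{\M,P}(F)\,q+k_F-1}{k_F}.
\]
Here $k$ ranges over dragon Hall--Rado tuples indexed by flats, and the weights are the M\"obius-inverted flat weights $\omega_{\M,P}(F)$, not the $y_S$ on subsets. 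Your Step~1 description, via multisets chosen from the sets $S$ and credited to Eur--Larson, is not this formula. Three observations then finish the proof:
\begin{enumerate}[leftmargin=20pt]
\item[(a)] If $\omega_{\M,P}(F)=0$ and $k_F\geqslant1$, the factor is the constant $\binom{k_F-1}{k_F}=0$. So tuples charging an unsupported flat, in particular any flat of rank $\geqslant3$, contribute nothing, not even to lower-order terms.
\item[(b)] For a line $F$, the capacity bound $k_F\leqslant\rk_\M F-1=1$ forces every surviving tuple to be an indicator vector $\mathbf{1}_I$ of a set $I$ of supported lines.
\item[(c)] For such a tuple, each factor is exactly $\binom{q\,\omega_{\M,P}(F)}{1}=q\,\omega_{\M,P}(F)$, and the dragon Hall--Rado condition for $\mathbf{1}_I$ is literally the independence condition \eqref{eq:dilworth}.
\end{enumerate}
Enlarging the index set from supported lines to all of $\DT(\M)$ only adds terms with $\omega_{\M,P}(I)=0$. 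This is exactly the paper's proof. Your Step~2 contains (b) and (c) in heuristic form, but without (a) and the exactness of the binomial factors it says nothing about the lower-order coefficients.

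Step~3, as you set it up, would also fail. A section of $\cL_F=\pi_F^*\cO(1)$ vanishes on a general fiber of $\pi_F\colon W_{\calA}\to\P^1$, that is, on the strict transform of $\P(H)$ for a general hyperplane $H\supseteq L_F$. In that arrangement the whole line $F$ collapses to a single point. The description ``$\M/F$ times $\M|_F$'' is instead the boundary divisor of $F$, which is not the zero locus of a section of $\cL_F$. Already for $\M=\B_3$, $F=\{1,2\}$ and $\cL^a=\bigotimes\cL_{ij}^{\otimes a_{ij}}$, one has $\chi(\cL^{a+e_{12}})-\chi(\cL^a)=1+a_{13}+a_{23}$. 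This equals $\chi(\P^1,\cO(a_{13}+a_{23}))$ on the general fiber, whereas the exceptional divisor over $\P(L_{12})$ gives $a_{12}+2$. Moreover, since $\cL_{P+\Delta_S}^{\otimes q}=\cL_P^{\otimes q}\otimes\cL_F^{\otimes q}$, the recursion would have to run over multidegrees $a\in\Z^{\Lines(\M)}$ rather than over $P$. And for nonrealizable $\M$ there is no exact sequence to invoke, only the combinatorial Euler characteristic on $K(\M)$. All of this becomes unnecessary once you use the exact formula above.
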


The key to this proof is to use a Hirzebruch--Riemann-Roch-type formula in \cite{larson2024k} to compute Euler characteristic using 
an elegant intersection number formula relying on the dragon-Hall--Rado condition \cite{backman2024simplicial}. 
Any shelling of the independence complex of the Dilworth truncation along the lines then expresses $\chi_{\M, P}(q)$ in the magic basis as a weighted $h$-polynomial with nonnegative coefficients.

For Boolean matroids, the formula above recovers Stanley's forest formula
for graphical zonotopes \cite{stanley1991zonotope}, and the real-rootedness of $h^{\ast}$-polynomial
recovers Beck, Jochemko and McCullough's
result for lattice zonotopes \cite{beck2019h}.

The magic positivity of $\chi_{\M, P}$ and the real-rootedness of $\hstar_{\M, P}$ do not hold for arbitrary line bundles given by lattice generalized permutohedra. Eur and
Larson identify $\hstar_{\M,\Delta(n-1,n)}$ with the $h$-polynomial of the
reduced broken circuit complex \cite[Example 5.8]{eur2023k}. For
$\M=\U_{n-1,n}$, it is $1+t+\cdots+t^{n-2}$, which is not real-rooted for
$n \geqslant 4$.

We next introduce analogous line bundles on the Deligne--Mumford--Knudsen compactification $\Mbar_{0,n}$ of rational curves
with $n$ marked points. For every subsets $S \subseteq [n-1]$ of size $3$, let the cross-ratio map define a collection of line bundles
\[
  \textrm{cr}_S\colon\Mbar_{0,n}\longrightarrow
  \Mbar_{0,S\cup\{n\}}\cong\P^1,
  \qquad \cL_S\coloneqq \textrm{cr}_S^*\cO_{\P^1}(1),
\]
and, for nonnegative integers $y = \begin{pmatrix}y_S\end{pmatrix}$, we define 
$\cL_y \coloneqq \bigotimes_S\cL_S^{\otimes y_S}$. We let  
\[
  \DT_n \coloneqq \left\{I\subseteq\binom{[n-1]}3:
  \left|\bigcup_{S\in J}S \cup \{n\}\right| \geqslant \abs{J} + 3
  \text{ for every nonempty }J\subseteq I\right\}, 
\]
which is the Dilworth truncation of the braid matroid along its
connected lines.

\begin{theorem}[\Cref{thm:m0n-triples}]\label{thm:main-m0n}
  For $n\geqslant3$ and nonnegative integers $y = \begin{pmatrix}y_S \end{pmatrix}$ for $S \in \binom{[n-1]}{3}$,
  \[
    \chi\big(\Mbar_{0,n},\cL_y^{\otimes q}\big)
    =\sum_{I\in\DT_n}\left(\prod_{S\in I}y_S\right)q^{|I|}.
  \]
  This polynomial is magic positive, and its $\hstar$-polynomial is real-rooted.
\end{theorem}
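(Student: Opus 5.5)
The strategy is to reduce \Cref{thm:main-m0n} to \Cref{thm:main-EC-independence} and \Cref{thm:main}, using the realization of $\Mbar_{0,n}$ as the wonderful variety of the braid arrangement. Let $\M=\M(K_{n-1})$ be the graphical matroid of the complete graph on $[n-1]$, which is the braid matroid of rank $n-2$. Its maximal building set gives $\Mbar_{0,n}$ as the wonderful compactification of $\M_{0,n}$. Under this identification $K(\M)=K(\Mbar_{0,n})$, and $\chi(\M,-)$ is the sheaf Euler characteristic.

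The first step is to express each $\cL_S$ as $\cL_{P_S}$ for an explicit generalized permutohedron $P_S$. For $S=\{a,b,c\}$, the forgetful map $\Mbar_{0,n}\to\Mbar_{0,S\cup\{n\}}\cong\P^1$ is the wonderful-model map induced by restricting to the coordinates of the sub-arrangement on $S$. The $\P^1$ is the wonderful variety of the rank-$2$ braid matroid on the triangle edges $\{ab,bc,ac\}$. Pulling back $\cO(1)$ should give the class of the simplex $\Delta_{T}$ on the three edges $T=\{ab,ac,bc\}\subseteq E(K_{n-1})$. To confirm this, I would compare first Chern classes in the Chow ring. The class $\psi$-type/boundary expression of $\textrm{cr}_S^*[\mathrm{pt}]$ is the sum of boundary divisors $D_F$ over flats $F$ whose partition separates the points as prescribed. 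This should match $-\sum_{F\supseteq T}x_F$-type formulas for $\Delta_T$ modulo linear relations, and it suffices to check it in the nef cone via the tropical/Bergman fan support function $F\mapsto \max$ over the polytope.

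Since $\cl_\M(T)$ is the triangle flat $F_S$, of rank $2$, the weight is
\[
\omega_{\M,P}(F_S)=y_S
\]
for $P=\sum_S y_S\Delta_{T_S}$, and $\omega$ vanishes on all other flats of rank $\geqslant2$. Hence $\cL_y$ is a zonotopal class supported on the connected lines. The lines of $\M(K_{n-1})$ are of two kinds: triangles, which are connected, and pairs of disjoint edges, which are disconnected. Only the triangles carry weight.

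Next I would apply \Cref{thm:main-EC-independence}. Since the weight vanishes off triangles, the sum runs over independent sets of $\DT(\M)$ consisting of triangles only. Such a set must satisfy
\[
\rk_\M\Big(\bigcup_{F\in J}F\Big)\geqslant |J|+1
\]
for every nonempty $J$. The rank of a union of edges in $K_{n-1}$ is $|V|-c$, where $V$ is the vertex set covered and $c$ is the number of components. The Dilworth condition therefore needs to be translated into the stated condition $|\bigcup S\cup\{n\}|\geqslant|J|+3$, that is $|\bigcup S|\geqslant |J|+2$.

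Comparing the two conditions, for connected $J$ they agree. For disconnected $J$, the condition for $J$ follows from the conditions on its components, but the converse direction needs care. Indeed, $\DT(\M)$ imposes $\sum(|V_i|-1)\geqslant |J|+1$, whereas $\DT_n$ imposes $|\bigcup V_i|\geqslant |J|+2$. I would show that both families coincide by checking all $J$. Applying the condition to each connected component $J_i$ gives $|V_i|\geqslant|J_i|+2$. Then with $c\geqslant2$ components, both inequalities follow: $\sum(|V_i|-1)\geqslant |J|+c\geqslant|J|+2$, and $|\bigcup V_i|\geqslant \sum(|V_i|)-\text{overlaps}$. Overlapping vertices would merge components, so they are disjoint, giving $\sum|V_i|\geqslant |J|+2c$. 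So both conditions are equivalent to requiring the inequality on connected subfamilies, and the two families coincide. This yields the formula, and magic positivity and real-rootedness then follow from \Cref{thm:main}.

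The main obstacle is the first step, the identification $\cL_S=\cL_{\Delta_{T_S}}$ in $K(\M)$. I would prove it by functoriality: the map $\textrm{cr}_S$ is induced by the matroid restriction $\M\to\M|_{T}$. Pullback of the class of $\Delta_T$ on $\P^1=$ the wonderful variety of $\U_{2,3}$ is $\cO(1)$, and the K-ring pullback along restriction sends $\cL_{P}$ to $\cL_{P}$, viewing $P$ inside $\R^T\subseteq\R^E$.
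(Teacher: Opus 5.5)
Your combinatorial half matches the paper and is correct. With $\M=\M(K_{n-1})$ and $P_y=\sum_S y_S\Delta_{T_S}$, the weights are $y_S$ on the triangle lines $F_S$ and zero elsewhere, so \Cref{thm:main-EC-independence} and \Cref{thm:main} apply. Your component argument, showing that $\DT_n$ equals the Dilworth truncation of $\M$ along its connected lines, is also right: both families are cut out by $|V(J)|\geqslant|J|+2$ on connected subfamilies $J$. The paper only asserts this identification, so your check adds something.

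The gap is in the geometric input. $\Mbar_{0,n}$ is the wonderful variety of the braid arrangement for the \emph{minimal} building set $\G_{\min}$ of connected flats, not the maximal one. The ring $K(\M)$ and the map $\chi(\M,-)$ used in \Cref{thm:main-EC-independence} belong to the maximal model $W=W_{\M(K_{n-1})}$. For $n\geqslant5$ this is a nontrivial iterated blowup of $\Mbar_{0,n}$. Already for $n=5$ it is $\Mbar_{0,5}$ further blown up at the three points given by the disconnected lines $\{12,34\},\{13,24\},\{14,23\}$, with Picard rank $8$ instead of $5$. So ``$K(\M)=K(\Mbar_{0,n})$'' is false, and nothing in your argument justifies $\chi(\Mbar_{0,n},\cL_y^{\otimes q})=\chi(\M,\cL_{P_y}^{\otimes q})$.

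The missing step is a comparison along the blowdown $p_n\colon W\to\Mbar_{0,n}$, the projection onto the factors indexed by $\G_{\min}$. Since $p_n$ is a composite of blowups along smooth centers, $Rp_{n*}\calO_W=\calO_{\Mbar_{0,n}}$, and the projection formula gives $\chi(\Mbar_{0,n},\xi)=\chi(W,p_n^*\xi)$. Moreover, $\textrm{cr}_S\circ p_n$ is the projection $\pi_{F_S}$ to the factor $\P(L^{F_S})\cong\P^1$, so $p_n^*\cL_S=\cL_{F_S}$. This is the content of \Cref{lem:compute-EC-as-pullback}. Once you work on $W$, the step you called the main obstacle is tautological: in $K(\M)$ one has $\cL_{\Delta_{T_S}}=\cL_{\cl_\M(T_S)}=\cL_{F_S}$ by definition, so no Chern class or support-function computation is needed.

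Alternatively, you can bypass the maximal model entirely using the Cerberus formula \eqref{eq:cerberus}. When $y$ is supported on $3$-subsets, every nonvanishing summand comes from the indicator vector of a set $I$ of $3$-subsets, since taking $c'=e_S$ forces $c_S\leqslant1$. The Cerberus condition on such a vector is verbatim the defining condition of $\DT_n$, and the summand equals $q^{|I|}\prod_{S\in I}y_S$. Magic positivity then follows as in \Cref{thm:magic-zonotopal}. The one input needed is that $\DT_n$, restricted to $\supp(y)$, is the independence complex of a matroid; this holds by \Cref{thm:edmonds} applied to $J\mapsto|\bigcup_{S\in J}S\cup\{n\}|-3$.
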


Next, we consider the cotangent line bundles on $\Mbar_{0, n}$, whose first Chern classes are the $\psi$-classes. Let
$\cL_i$ be the cotangent line bundle at the $i$th marked point, with
$c_1(\cL_i)=\psi_i$.

\begin{theorem}[\Cref{thm:psi-two}]\label{thm:main-psi}
  Let $n\geqslant3$, $d=n-3$, and $i\ne j$ in $[n]$. Then
  \[
    \hstar\big(\Mbar_{0,n},\cL_i\otimes\cL_j;t\big)
    =\sum_{k=0}^d\binom dk^2t^k.
  \]
  This polynomial is real-rooted. The Snapper polynomial
  $\chi\big(\Mbar_{0,n},(\cL_i\otimes\cL_j)^{\otimes q}\big)$ is magic
  positive if and only if $n\leqslant7$.
\end{theorem}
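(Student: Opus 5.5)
The plan is to identify $\cL_i\otimes\cL_j$ with the pullback of an ample line bundle on a projective toric variety along a birational morphism. That reduces the Snapper polynomial to an Ehrhart polynomial, which we then analyse directly.

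\emph{Step 1 (geometry).} Let $d=n-3$. By Kapranov's construction, for each marked point $k$ there is a birational morphism $\pi_k\colon\Mbar_{0,n}\to\P^{d}$ with $\cL_k=\pi_k^*\cO_{\P^d}(1)$. It is an iterated blow-up of $\P^d$ along the $n-1$ points in general position and the linear spans of subsets of them. Choose coordinates so that these points are the $d+1$ coordinate points together with $(1:\dots:1)$. Then $\pi_j\circ\pi_i^{-1}$ is the standard Cremona transformation $x\mapsto x^{-1}$, which is torus-equivariant. Consequently $(\pi_i,\pi_j)\colon\Mbar_{0,n}\to\P^d\times\P^d$ maps birationally onto the closure $X$ of the graph of the inversion map on the torus. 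The variety $X$ is the projective toric variety of the polytope
\[
P=\Delta_d+(-\Delta_d)=\operatorname{conv}\{e_a-e_b\},
\]
the type $A$ root (Legendre) polytope. This polytope is normal (IDP), so $X$ is a normal projective toric variety, and $\cL_i\otimes\cL_j=f^*\cO_X(1)$ with $\cO_X(1)=\cO(1,1)|_X$ the ample bundle attached to $P$. As a consistency check, the volume matches: $\int(\psi_i+\psi_j)^d=\sum_a\binom da\int\psi_i^a\psi_j^{d-a}=\sum_a\binom da^2=\binom{2d}{d}$, using $\int\psi_i^a\psi_j^{d-a}=\binom da$ on $\Mbar_{0,n}$.

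\emph{Step 2 (Euler characteristic).} Toric varieties have rational singularities, and $f\colon\Mbar_{0,n}\to X$ is a birational morphism from a smooth variety. Hence $Rf_*\cO_{\Mbar_{0,n}}=\cO_X$. The projection formula then gives
\[
\chi\big(\Mbar_{0,n},(\cL_i\otimes\cL_j)^{\otimes q}\big)=\chi(X,\cO_X(q))=\#(qP\cap\Z^{d+1}/\Z\mathbf 1),
\]
the Ehrhart polynomial of $P$. Its $\hstar$-polynomial is classically $\sum_k\binom dk^2t^k$. One can prove this via a unimodular triangulation of the root polytope, or via $\#(qP)=\sum_k\binom dk^2\binom{q+d-k}{d}$, counting lattice points by their positive and negative parts. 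Real-rootedness follows from the identity
\[
\sum_k\binom dk^2t^k=(1-t)^dP_d\Big(\frac{1+t}{1-t}\Big),
\]
where $P_d$ is the Legendre polynomial. All zeros of $P_d$ lie in $(-1,1)$, and the Möbius map sends $(-1,1)$ onto the negative real axis, so every zero of the $\hstar$-polynomial is real and negative.

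\emph{Step 3 (magic positivity).} Write $f(q)=(1+q)^dg\big(q/(1+q)\big)$, so that $g(u)=(1-u)^df\big(u/(1-u)\big)=\sum_im_iu^i$. Substituting the binomial expansion of Step 2 gives a closed alternating-sum formula for each $m_i$. For $d\leqslant4$ we verify $m_i\geqslant0$ directly. For $d=5$ we exhibit an explicit negative $m_i$; I expect it near $i=\lfloor d/2\rfloor$ or at the top end.

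The main obstacle is showing negativity for \emph{all} $d\geqslant5$, not just $d=5$. I would first look for a single index, such as $i=d$ with $m_d=(-1)^df(-1)\cdot(\pm1)$ after normalization, or $i=d-1$, at which $m_i$ has a simple closed form, for example a difference of central binomial sums, that is visibly negative for $d\geqslant5$. Failing that, I would use an asymptotic estimate of the alternating sum for large $d$, combined with a finite check for small $d$. A secondary technical point is justifying Step 1 rigorously: that the Kapranov maps for $i$ and $j$ are compatible with a common torus, so that the image really is the toric graph variety.
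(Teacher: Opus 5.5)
Your Steps 1--2 are correct, and they reach the $\hstar$-formula by a genuinely different route from the paper. The paper specializes the Lee--Pandharipande formula \eqref{eq:llpp-psi} at $w=e_i+e_j$ and sums by Vandermonde to get $\sum_a\binom da^2\binom{q+d-a}{d}$. You instead use that $\pi_j\circ\pi_i^{-1}$ is the standard Cremona map, so that $\cL_i\otimes\cL_j$ is pulled back from the normal toric graph variety of the type~$A$ root (Legendre) polytope. Rational singularities and Demazure vanishing then turn the Snapper polynomial into that polytope's Ehrhart polynomial. Your route is more conceptual: it explains the type-$B$ Narayana numbers, the volume $\binom{2d}{d}$, and the palindromicity. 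It costs you the Kapranov--Cremona compatibility and the normality of $X$, which the paper's two-line computation avoids. One slip: lattice points must be counted in the character lattice $\{x\in\Z^{d+1}:\sum_a x_a=0\}$, not in $\Z^{d+1}/\Z\mathbf 1$. The latter overcounts; for $d=1$ it gives $4q+1$ instead of $\chi(\P^1,\cO(2q))=2q+1$. Your real-rootedness argument is the same as the paper's.

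The genuine gap is Step~3. The ``only if'' direction, failure of magic positivity for every $n\geqslant8$, is not proved, and your leading candidate index cannot work. By \Cref{lem:magic-transform}, $m_d=(-1)^df(-1)=\hstar_d=\binom dd^2=1>0$ for every $d$. The middle guess also fails at $d=5$, where the magic vector is $(1,-\tfrac{13}{30},\tfrac{29}{60},\tfrac{29}{60},-\tfrac{13}{30},1)$.

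The missing idea is to look at the bottom of the magic vector. There $m_1=c_1-dc_0=f'(0)-d$, since $f(0)=1$. Differentiate $f(q)=\sum_k\binom dk^2\binom{q+d-k}{d}$ at $q=0$:
\begin{itemize}
\item the $k=0$ term contributes $H_d=\sum_{j\leqslant d}1/j$;
\item for $k\geqslant1$ the factor $\binom{q+d-k}{d}$ vanishes at $q=0$ with derivative $(-1)^{k-1}/\big(k\binom dk\big)$, so these terms contribute $\sum_{k\geqslant1}\frac{(-1)^{k-1}}{k}\binom dk=H_d$.
\end{itemize}
Hence $m_1=2H_d-d$. This equals $-\tfrac{13}{30}$ at $d=5$, and it is decreasing for $d\geqslant2$ because $m_1(d+1)-m_1(d)=\tfrac{2}{d+1}-1<0$. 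So $m_1<0$ for all $d\geqslant5$, with no asymptotics needed.

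Your fallback $i=d-1$ would also work, but only because $f(-1-q)=(-1)^df(q)$ (reflexivity of the root polytope) makes the magic vector palindromic, so $m_{d-1}=m_1$. That still rests on this computation. Your ``if'' direction, a direct check for $d\leqslant4$, is what the paper does.
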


The formula above identifies $\hstar\big(\Mbar_{0,n},\cL_i\otimes\cL_j;t\big)$ with the type-$B$ Narayana polynomial
\cite{reiner1997noncrossing,simion2003type}, which is well-known to only have real zeros. 
Its real-rootedness follows from many properties, such as its relation to Legendre polynomials \cite{szego1975orthogonal}, and the fact that it is a Hadamard product of real-rooted polynomials \cite{wagner1992total}.

Finally, we introduce saturated and weakly saturated $K$-classes, and give sufficient and necessary conditions for magic positivity
beyond the zonotopal case. Suppose $\omega_{\M, P}\geqslant0$, and let
$\mathcal B$ be the set of maximal dragon Hall--Rado tuples supported on
$\supp(\omega_{\M, P})$; see \eqref{eq:dHR}. For a flat $F$ in the support of $\omega_{\M, P}$, define its
\emph{capacity} and \emph{demand}
\[
  \capa_F \coloneqq \rk_{\M}(F)-1,\qquad \dem_F \coloneqq \min_{B \in\mathcal{B}(\M)} B_F, 
\]
\begin{definition}\label{def:classes}
  A class of line bundle $\cL_P$ in $K(\M)$ with $\omega_{\M, P}\geqslant0$ is \emph{saturated} if
  $\omega_{\M, P}(F)\geqslant\capa_F$ for every supported flat, and
  \emph{weakly saturated} if $\omega_{\M, P}(F)\geqslant\dem_F$ for every supported
  flat.
\end{definition}

\begin{theorem}[\Cref{thm:sufficient,thm:necessary}]
  For pairs with nonnegative weights,
  \[
    \text{zonotopal}\ \Longrightarrow\ \text{saturated}\
    \Longrightarrow\ \text{magic positive}\
    \Longrightarrow\ \text{weakly saturated}.
  \]
\end{theorem}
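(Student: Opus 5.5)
Three implications are claimed, and I would prove them in order: zonotopal $\Rightarrow$ saturated, saturated $\Rightarrow$ magic positive, magic positive $\Rightarrow$ weakly saturated. Throughout I use the general Euler characteristic formula from the Hirzebruch--Riemann--Roch type argument of \cite{larson2024k}, in the form obtained by combining it with the dragon Hall--Rado intersection formula of \cite{backman2024simplicial}:
\[
\chi_{\M,P}(q)=\sum_{T}\Bigl(\prod_{F}\binom{\omega_{\M,P}(F)\,q+\text{(shift)}}{T_F}\Bigr),
\]
summing over multisets $T$ of flats in $\supp(\omega_{\M,P})$ satisfying the dragon Hall--Rado condition \eqref{eq:dHR}. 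This is the same expansion that, in the zonotopal case, collapses to \Cref{thm:main-EC-independence}.

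\textbf{Zonotopal $\Rightarrow$ saturated.} This step is immediate. A zonotopal class is supported on lines, and for a line $F$ the capacity is $\capa_F=\rk_{\M}(F)-1=1$. Since the weights are integers, any supported line has $\omega_{\M,P}(F)\geqslant 1=\capa_F$.

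\textbf{Saturated $\Rightarrow$ magic positive.} First, a flat $F$ can appear with multiplicity at most $\rk F-1=\capa_F$ in a dragon Hall--Rado tuple. So the contribution of $F$ is a polynomial of the form $\binom{\omega_F q+c}{k}$ with $k\leqslant\capa_F$.

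Second, I would show the key lemma: if $\omega\geqslant k$, then $\binom{\omega q+c}{k}$, with the appropriate shift coming from the formula, is magic positive of degree $k$. The natural approach is to write $\omega q+j=(\omega-j)q+j(q+1)$ for $0\leqslant j<k$ and expand the product $\prod_{j}\bigl((\omega-j)q+j(1+q)\bigr)/k!$. When $\omega\geqslant k$, every factor is a nonnegative combination of $q$ and $1+q$. Dividing by $k!$ needs care, but magic positivity only concerns nonnegativity of real coefficients, so integrality is not an issue.

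Third, I would check that magic positive polynomials of degrees $d_1,d_2$ multiply to a magic positive polynomial of degree $d_1+d_2$. Summing over $T$ is the delicate part, because different $T$ give products of different total degree $\sum_F T_F$. A degree-$e$ magic positive polynomial is magic positive in degree $d$ only after multiplying it by $(1+q)^{d-e}$, which is not what the sum gives.

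I therefore expect the main obstacle to be organizing the sum over $T$ into a shelling-type argument, as in the zonotopal proof. The plan is to group the terms by maximal tuples $B\in\mathcal{B}$ using a shelling of the complex of dragon Hall--Rado multisets. Each maximal tuple would then contribute $q^{|R(B)|}(1+q)^{d-|R(B)|}$ times a positive factor, where $R(B)$ is the restriction set. I would try to use the transversal-polymatroid structure of the dragon Hall--Rado tuples to obtain such a shelling, viewing them as independent sets of a multiset matroid, since matroid complexes are shellable.

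\textbf{Magic positive $\Rightarrow$ weakly saturated.} I would argue by contrapositive. Suppose $\omega_{\M,P}(F)<\dem_F$ for some supported flat $F$. Then every maximal tuple uses $F$ with multiplicity $\dem_F$, and its factor $\binom{\omega_F q+c}{\dem_F}$ has a root in the open interval $(-1,0)$; for instance, $\omega_F q+j=0$ gives $q=-j/\omega_F$. A polynomial of degree $d$ that is magic positive satisfies $(-1)^d f(-1-q)\geqslant 0$ and has a prescribed sign on $(-1,0)$; more simply, it must satisfy $m_0=f(-1)\cdot(-1)^d\ldots$, so its leading behaviour at $q=-1/2$ is controlled. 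The cleanest test I would try is the coefficient $m_d$, or the evaluation $f(-1/2)$ via $f(q)=\sum m_i q^i(1+q)^{d-i}$, since $(-1)^d2^d f(-1/2)=\sum m_i\geqslant 0$. I would show that this evaluation has the wrong sign when $\omega_F<\dem_F$, because the forced factor is then negative at the relevant point.
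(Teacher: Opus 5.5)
Your first implication is correct: $\capa_F=1$ on a line, and supported weights are positive integers. The other two implications have genuine gaps.

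\textbf{Saturated $\Rightarrow$ magic positive.} Your key lemma is the right local ingredient. The magic transform of $\binom{\omega q+k-1}{k}$ is $\phi_{\omega,k}(y)=\frac{1}{k!}\prod_{j=0}^{k-1}\bigl(j+(\omega-j)y\bigr)$, so $m(\chi_{\M,P};y)=\sum_{k}(1-y)^{d-|k|}\prod_F\phi_{\omega_F,k_F}(y)$. You correctly see that the factors $(1-y)^{d-|k|}$ are the problem, but you only hope for a resolution.

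Passing to a matroid on copies of the flats does not let you apply \Cref{lem:weighted-h}. The summand of a tuple $k$ is $\prod_F\binom{\omega_Fq+k_F-1}{k_F}$, which is not of the form $w(I)\,q^{|I|}$ for a vertex weight $w$. Your claim that each block contributes ``$q^{|R(B)|}(1+q)^{d-|R(B)|}$ times a positive factor'' is exactly the statement that has to be proved.

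The missing idea is to decompose the discrete polymatroid itself into boxes $[R_i,B_i]$ in which every coordinate of $R_i$ is either $0$ or $(B_i)_F$. The paper gets this in \Cref{lem:lex-shelling}, via lexicographic order on bases and basis exchange. On such a box the sum factors over $F$. A fixed coordinate gives $\phi_{\omega_F,(B_i)_F}$. A free coordinate running over $[0,b]$ telescopes, by the hockey-stick identity, to
\[
\sum_{k=0}^{b}(1-y)^{b-k}\phi_{\omega,k}(y)=\Phi_{\omega,b}(y)=\frac{1}{b!}\prod_{j=1}^{b}\bigl(j+(\omega-j)y\bigr),
\]
which has nonnegative coefficients when $\omega\geqslant b$. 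This is where saturation $\omega_F\geqslant\capa_F\geqslant(B_i)_F$ is used. Positivity of $\phi_{\omega,k}$ alone only needs $\omega\geqslant k-1$, so your per-term lemma is not where the hypothesis enters.

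\textbf{Magic positive $\Rightarrow$ weakly saturated.} This step fails as written, for three reasons.

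First, roots in $(-1,0)$ are compatible with magic positivity, for example $1+2q=q+(1+q)$. What is excluded is a real root $q<-1$: for $q=-s$ with $s>1$, every $q^i(1+q)^{d-i}=(-1)^d s^i(s-1)^{d-i}$ has the same sign.

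Second, your test at $q=-1/2$ rests on a false identity. There $q^i(1+q)^{d-i}=(-1)^i2^{-d}$, so $f(-1/2)=2^{-d}\sum_i(-1)^im_i$, which has no forced sign; $\sum_i m_i$ is the leading coefficient of $f$.

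Third, a factor of the summands of maximal tuples is not a factor of $\chi_{\M,P}$. Also, maximal tuples satisfy $B_F\geqslant\dem_F$, not $B_F=\dem_F$, and the per-term factor has extreme root $-(B_F-1)/\omega_F$, which is off by one.

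The missing step is \Cref{lem:forced}. Fix all coordinates except $F$. The admissible values of $k_F$ form an interval $[0,u]$ with $u\geqslant\dem_F$. To see this, let $r$ be the rank function of the polymatroid on $S=\supp(\omega_{\M,P})$; for every $A\ni F$,
\[
r(A)-k(A\setminus\{F\})\geqslant r(A)-r(A\setminus\{F\})\geqslant r(S)-r(S\setminus\{F\})=\dem_F .
\]
Then $\sum_{k=0}^{u}\binom{\omega_Fq+k-1}{k}=\binom{\omega_Fq+u}{u}$ is divisible by $\binom{\omega_Fq+\dem_F}{\dem_F}$, and hence so is $\chi_{\M,P}$.

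If $\omega_F<\dem_F$, the factor $\omega_Fq+\dem_F$ gives the root $q=-\dem_F/\omega_F<-1$. Equivalently, $m(\chi_{\M,P};y)$ has the positive root $y=\dem_F/(\dem_F-\omega_F)$. This is impossible when the magic vector is nonnegative with $m_0=1$.
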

The converses of the implications are not true (\Cref{ex:strict-implications}). 
The sufficient condition for magic positivity follows from a decomposition of the dragon
Hall--Rado polymatroid into boxes. For Boolean matroids, it answers one
direction of a question of Ferroni and Higashitani on Minkowski sums of
faces of a simplex \cite{ferroni2024examples}.

\subsection*{Acknowledgements} 
The author is thankful to Basile Coron for numerous stimulating discussions on related problems and techniques, as well as to Matt Larson, Chris Eur, Luis Ferroni for useful comments and suggestions on an earlier draft. 
This work was pursued independently and before the appearance of Chris Eur's
problem list \cite{eur2026problems}, and it makes progress to Problem 14 therein. Claude Opus 4.8/5 assisted with generating SageMath code for computations, producing figures, final-stage proofreading and wordsmithing; all mathematical content originates from the author. The author was supported by the NSF Grant DMS-1926686 at the IAS, 
and is currently supported by the Donald J. Lewis fellowship at the University of Michigan.

\section{Preliminaries}
\label{sec:prelim}
This section lays out the background for the three main ingredients of the proof of \Cref{thm:main}: matroids and their Dilworth truncations, the shellability and face enumeration polynomials of
independence complexes of matroids, as well as the $K$-rings and Euler characteristics of matroids.

\subsection{Matroids: independence complex and Dilworth truncation}
\label{sec:prelim-matroids}
In this subsection, we recall the basics of matroids using the definition of independent sets, as well as submodular functions.
For background on matroid theory, we refer to the classical textbook by \cite{oxley2011matroid}.

\begin{definition}
\label{def:matroid-independent-sets}
A \emph{matroid} $\M$ is a pair $(E, \calI)$ consisting of a finite ground set $E$ and a collection $\calI$ of subsets of $E$, called \emph{independent}, satisfying the following properties:
\begin{enumerate}[leftmargin=20pt, font=\normalfont]
  \item \emph{Nonempty:} $\varnothing \in \calI$.
  \item \emph{Hereditary:} If $X \in \calI$, and $Y \subseteq X$, then $Y \in \calI$.
  \item \emph{Independence augmentation property:} If $X, Y \in \calI$ with $\abs{X} < \abs{Y}$, then there exists an element $e \in Y \setminus X$ such that $X \cup \{e\} \in \calI$.
\end{enumerate}
\end{definition}

Given a matroid $\M$ on a finite ground set $E$, there is a rank function $\rk_{\M} \colon 2^{E} \to \Z_{\geqslant 0}$ given by
\[
\rk_{\M} \colon S \mapsto \max \{\abs{I} \colon I \subseteq S,  I \in \calI\}.
\]
The rank of the matroid $\M$ is $r = \rk_{\M}(E)$. Two distinct nonloop elements $i, j \in E$ are \emph{parallel}, if
$\rk_{\M}(\{i, j\}) = 1$, and \emph{nonparallel}, if
$\rk_{\M}(\{i, j\}) = 2$.
Furthermore, a \emph{flat} of $\M$ is a subset $F \subseteq E$ that is maximal
among the subsets of $E$ of rank $\rk_{\M}(F)$; the flats form a lattice, denoted
by $\calL(\M)$. For any set $S \subseteq E$, the \emph{closure} $\cl_{\M}(S)$ of $S$ in $\M$ is the smallest flat $F$ containing $S$, and satisfies $\rk_{\M}(S) = \rk_{\M}(F)$.
The flats of rank $2$ are the \emph{lines} of $\M$, denoted by $\Lines(\M)$.
If $e \in E$ and $\{e\} \notin \calI$, or equivalently, $\rk_{\M}(\{e\}) = 0$, then $e$ is called a \emph{loop}.
Throughout, $\M$ is loopless and its ground set is nonempty.

The \emph{independence complex} $\calI(\M)$ of $\M$ is the simplicial complex on $E$ whose
faces are the independent sets of $\M$; the independent sets which are the facets are called the \emph{bases}.
Since all bases have size $\rk_{\M}(E)$, $\calI(\M)$ is pure of dimension $\rk_{\M}(E) - 1$. For simplicity, we write $I \in \M$ to mean that $I$ is
an independent set of $\M$.

\begin{theorem}[Edmonds {\cite{edmonds1970submodular}}; see also {\cite{oxley2011matroid}}]\label{thm:edmonds}
  Let $E$ be a finite set and $f \colon 2^E \setminus \{\emptyset\} \to \Z$ a
  nonnegative, submodular function that is \emph{weakly increasing}, meaning $f(S) \leqslant f(T)$
  whenever $\emptyset \neq S \subseteq T$. 
  Then the set
  \[
    \calI_f \coloneqq \big\{ I \subseteq E :\ |J| \leqslant f(J) \text{ for every nonempty } J \subseteq I \big\}
  \]
  is the collection of independent sets of a matroid on $E$. In this case, we call this matroid $\M_f$.
\end{theorem}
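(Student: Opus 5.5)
We verify the three independence axioms for $\calI_f$ directly; the augmentation axiom is the only substantial one. For the nonempty axiom, $\varnothing \in \calI_f$ holds vacuously, since there is no nonempty $J \subseteq \varnothing$. For the hereditary axiom, if $X \in \calI_f$ and $Y \subseteq X$, then every nonempty $J \subseteq Y$ is also a nonempty subset of $X$, so $|J| \leqslant f(J)$ and $Y \in \calI_f$.

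For augmentation, we introduce tight sets. Call a nonempty $J \subseteq I$ \emph{tight} for $I \in \calI_f$ if $|J| = f(J)$. The key lemma is that tight sets are closed under intersection (when nonempty) and under union. Let $A, B \subseteq I$ be tight with $A \cap B \neq \varnothing$. Submodularity and independence of $I$ give
\[
|A| + |B| = f(A) + f(B) \geqslant f(A \cup B) + f(A \cap B) \geqslant |A \cup B| + |A \cap B| = |A| + |B|.
\]
So equality holds throughout, and $A \cup B$ and $A \cap B$ are tight. Submodularity is only assumed on nonempty sets, so when $A \cap B = \varnothing$ we cannot argue this way. This is the main obstacle. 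The fix is to work with connected components: tight sets that intersect merge, so the maximal tight sets form a partition of part of $I$ into disjoint blocks.

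Next we determine when $I + e$ fails to be independent, for $e \notin I$. This happens exactly when some nonempty $J \subseteq I$ satisfies $|J + e| > f(J + e)$. Since $|J| \leqslant f(J) \leqslant f(J+e)$, this forces $f(J + e) = |J|$, so $J$ is tight and $f(J + e) = f(J)$. The case $J = \varnothing$ means $f(\{e\}) = 0$. Say $e$ is \emph{spanned} by a tight set $T$ if $f(T + e) = f(T)$. Two facts follow:
\begin{itemize}
\item If $e$ is spanned by tight sets $T_1$ and $T_2$, then $e$ is spanned by $T_1 \cup T_2$. When $T_1 \cap T_2 \neq \varnothing$, apply submodularity to $T_1 + e$ and $T_2 + e$, whose intersection contains $e$ and so is nonempty. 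When $T_1 \cap T_2 = \varnothing$, the same inequality applied to $T_1 + e$ and $T_2 + e$ still works.
\item Let $C = \{e : f(\{e\}) = 0\}$. If $e$ is spanned by a nonempty tight $T$, then by monotonicity every element spanned by $T$ has positive value only if $T$ genuinely spans it.
\end{itemize}

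Now suppose $X, Y \in \calI_f$ with $|X| < |Y|$, and every $e \in Y \setminus X$ is blocked. Then every $e \in Y \setminus X$ is either in $C$ or spanned by a maximal tight block $T_i$ of $X$. Elements of $C$ cannot lie in $Y$, since $f(\{e\}) = 0 < 1$. So $Y \setminus X \subseteq \bigcup_i \mathrm{span}(T_i)$. Let $U = \bigcup_i T_i$. Repeated submodularity with overlap through $e$ shows that $T_i \cup (Y \cap \mathrm{span}(T_i))$ has $f$-value $|T_i|$. Independence of $Y$ then gives $|Y \cap (T_i \cup \mathrm{span}\, T_i)| \leqslant |T_i|$ for each $i$. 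Since $Y \cap X \setminus U$ contributes at most $|X \setminus U|$, summing yields $|Y| \leqslant |X|$, a contradiction.

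The delicate bookkeeping is in this last summation, because an element may be spanned by several disjoint blocks. If the direct argument stalls, we would instead prove the rank formula $r(S) = \min\bigl(|S \setminus U| + \sum_i f(U_i)\bigr)$ over partitions, as in Edmonds' original argument, and verify augmentation from that formula.
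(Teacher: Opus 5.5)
Your argument is correct, and it takes a different route from the one the paper relies on. The paper gives no proof and only cites Edmonds and Oxley, where the standard argument checks the circuit axioms rather than augmentation. There one takes the minimal nonempty sets $C$ with $|C|>f(C)$ and notes $f(C)=|C|-1$ by monotonicity (and nonnegativity when $|C|=1$). For distinct such $C_1,C_2$ containing $e$, one use of submodularity on the intersecting pair $C_1,C_2$, together with $|C_1\cap C_2|\leqslant f(C_1\cap C_2)$, gives $f\big((C_1\cup C_2)\setminus\{e\}\big)\leqslant |C_1\cup C_2|-2$. So $(C_1\cup C_2)\setminus\{e\}$ contains another such set.

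That proof is shorter. Your tight-set uncrossing proves augmentation directly and shows more structure: the disjoint maximal tight blocks of $X$, and exactly which elements are blocked. This is essentially the min--max rank formula you mention at the end. Both routes use submodularity only on pairs with nonempty intersection, which is the right reading of the hypothesis for $f$ defined on nonempty sets.

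A few steps need tidying, none of them serious.
\begin{enumerate}
\item Your first bullet does not give the passage from ``some tight $J\subseteq X$ spans $e$'' to ``the maximal block $T_i\supseteq J$ spans $e$''. Apply submodularity to $J+e$ and $T_i$, which meet in $J\neq\varnothing$, to get $f(T_i+e)\leqslant f(T_i)+f(J+e)-f(J)=f(T_i)$.
\item When you show $f(T_i\cup S)=f(T_i)$ for $S\subseteq\mathrm{span}(T_i)$, the overlap is through $T_i$, not through $e$: apply submodularity to $T_i\cup S$ and $T_i+e$.
\item Your second bullet is not a coherent statement and is never used. All you need there is $C\cap Y=\varnothing$.
\item The final count needs no delicate bookkeeping. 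Since $Y\subseteq\bigcup_i\big(T_i\cup\mathrm{span}(T_i)\big)\cup(X\setminus U)$, the union bound gives $|Y|\leqslant\sum_i|T_i|+|X\setminus U|=|X|$. An element lying in several spans only makes the right-hand side larger.
\end{enumerate}
In fact such overlaps cannot occur on $Y\setminus X$: the disjoint case of your first bullet forces $f(\{e\})=0$, which is impossible for $e\in Y$. So the fallback to the rank formula is unnecessary.
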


A matroid that arises in the fashion above, which plays a central role in the present work, is the Dilworth truncation of $\M$, introduced by Dilworth in \cite{dilworth1944dependence}.
Since the rank function of a matroid is weakly increasing and submodular, the following lemma is immediate.
\begin{lemma}\label{lem:dilworth-matroid}
  Let $\calF \subseteq \calL(\M) \setminus \{\varnothing\}$ be a collection of
  nonempty flats of $\M$.  Then the function
  \[
    f(J) \;=\; \rk_{\M}\Big(\bigcup_{F \in J} F\Big) - 1,
    \qquad \emptyset \neq J \subseteq \calF,
  \]
  is nonnegative, weakly increasing and submodular.
\end{lemma}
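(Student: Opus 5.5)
The plan is to check the three properties one at a time, writing $g(S) = \rk_{\M}(S)$ for the rank function of $\M$ on subsets of $E$ and $U(J) = \bigcup_{F \in J} F$ for $J \subseteq \calF$. With this notation $f(J) = g(U(J)) - 1$.

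\emph{Nonnegativity.} Take $J$ nonempty and pick some $F \in J$. The flat $F$ is nonempty, and since $\M$ is loopless every element of $F$ has rank $1$. Hence $g(U(J)) \geqslant g(F) \geqslant 1$, so $f(J) \geqslant 0$.

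\emph{Weakly increasing.} If $\emptyset \neq J \subseteq J'$, then $U(J) \subseteq U(J')$. Since $g$ is monotone, this gives $f(J) \leqslant f(J')$.

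\emph{Submodularity.} Take nonempty $J, J' \subseteq \calF$ with $J \cap J'$ also nonempty; the function is only defined on nonempty sets, so the submodular inequality only needs to be checked in that case. We have $U(J \cup J') = U(J) \cup U(J')$ and $U(J \cap J') \subseteq U(J) \cap U(J')$. The inclusion can be strict, and that is the only subtle point. Monotonicity and submodularity of $g$ then give
\[
  g(U(J \cup J')) + g(U(J \cap J')) \leqslant g(U(J) \cup U(J')) + g(U(J) \cap U(J')) \leqslant g(U(J)) + g(U(J')).
\]
Subtracting $2$ from both sides yields $f(J \cup J') + f(J \cap J') \leqslant f(J) + f(J')$.

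If the hypothesis of \Cref{thm:edmonds} is meant to include the case $J \cap J' = \emptyset$, then $f(\emptyset)$ has to be assigned a value. The lemma does not define $f$ there, so I would state the submodular inequality only for intersecting pairs, which is the form in which Edmonds' theorem is usually applied. I expect this convention, rather than any computation, to be the only real obstacle.
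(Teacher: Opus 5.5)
Your proof is correct and follows the same route as the paper, which just calls the lemma immediate from the monotonicity and submodularity of $\rk_{\M}$. Your checks supply exactly the omitted details: looplessness gives nonnegativity, and the inclusion $U(J \cap J') \subseteq U(J) \cap U(J')$ gives submodularity. On your convention worry, setting $f(\varnothing) = \rk_{\M}(\varnothing) - 1 = -1$, as the paper does in \Cref{prop:polymatroid}, makes your displayed chain valid verbatim for disjoint $J, J'$ too, since $U(\varnothing) = \varnothing$.
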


\begin{definition}[Dilworth truncation, {\cite{dilworth1944dependence}}]
  \label{def:dilworth}
  Let $\calF \subseteq \calL(\M) \setminus \{\varnothing\}$ and let $f$ be the
  function of \Cref{lem:dilworth-matroid}.  The \emph{Dilworth truncation} of
  $\M$ along $\calF$ is the matroid $\DT_{\calF}(\M) \coloneqq \M_f$ on the
  ground set $\calF$ given by \Cref{thm:edmonds}. Explicitly, its independent
  sets are
  \begin{equation}\label{eq:dilworth}
    \DT_{\calF}(\M) \;=\; \Big\{ I \subseteq \calF \;:\;
      \rk_{\M}\Big(\bigcup_{F \in J} F\Big) \;\geqslant\; |J| + 1
      \ \text{ for every nonempty } J \subseteq I \Big\}.
  \end{equation}
\end{definition}
It follows immediately that $\DT_{\calF'}(\M) = \DT_{\calF}(\M)|_{\calF'}$ for every
  $\calF' \subseteq \calF \subseteq \calL(\M) \setminus \{\varnothing\}$.  We write $\DT(\M) = \DT_{\Lines(\M)}(\M)$ for the Dilworth truncation along the lines. 

If $\M$ is realized by linear forms on a vector space $L$, its elements
are represented by points in $\P(L^{\vee})$. The Dilworth truncation $\DT(\M)$ along the lines of $\M$ is the point configuration obtained by intersecting the lines of $\M$ by a generic hyperplane \cite{crapo1970foundations,mason1977matroids,kung1990dilworth}.

\begin{example}[Dilworth truncation of the Boolean matroids, {\cite[Theorem 3.2]{dilworth1944dependence}}]
  \label{ex:dilworth-boolean}
  Let $\M = \B_n$ be the Boolean matroid on $[n]$, whose flats are all
  subsets of $[n]$ and whose rank function is the cardinality of sets.  Its lines are the
  $2$-subsets of $[n]$, and so $\DT(\B_n)$ has ground set equal to the edge set of the
  complete graph $K_n$. For any set $I$ of edges, $I$ is independent in $\DT(\B_n)$ if and only if for every nonempty $J \subseteq I$,
  \[
  \rk_{\M}\Big(\bigcup_{F \in J} F \Big) = \Big| \bigcup_{F \in J } F \Big| \;\geqslant\; |J| + 1
  \]
  where the union on the left is a union of subsets in the original matroid $\M$ with ground set $[n]$ thought of as the vertex set of $K_n$.
  This requires that no subset of $I$ is a circuit. Thus $\DT(\B_n) = \M(K_n)$
  is the graphic matroid of $K_n$ of rank $n-1$, and the lattice of
  flats of $\DT(\B_n)$ is the partition lattice of $[n]$.
\end{example}

\subsection{Simplicial complexes: shellings and face enumeration polynomials}
\label{sec:prelim-shell}
Let $\Delta$ be a pure $(d-1)$-dimensional simplicial complex. For background on simplicial complexes, we refer to Stanley's textbook \cite{stanley1996combinatorics}. A
\emph{shelling} of $\Delta$ is an ordering $B_1, \dots, B_\ell$ of its facets such
that for each $k \geqslant 2$, the intersection of $B_k$ with the subcomplex
generated by $B_1, \dots, B_{k-1}$ is pure of dimension $d-2$; the complex is
\emph{shellable} if it admits a shelling.  A shelling of $\Delta$ determines the \emph{restriction
map} $\rho \colon \Delta(d) \to \Delta$
\begin{equation}
  \label{eq:restriction}
  \rho(B_k) \;\coloneqq\; \big\{\, v \in B_k \;:\; (B_k \setminus\{v\}) \subseteq B_j \text{ for some } j < k \,\big\},
\end{equation}
and
\begin{equation}
  \label{eq:shelling-partition}
  \Delta = \bigsqcup_{k = 1}^{\ell} [\,\rho(B_k), B_k\,], \text{ with } [\,\rho(B_k), B_k\, ] = \{F \mid \rho(B_k) \subseteq F \subseteq B_k\}
\end{equation}
Provan and Billera introduced vertex decomposability of a simplicial complex, which is a stronger property that implies shellability.
\begin{proposition}[\cite{provan1980decompositions}]\label{thm:vd-shellability}
  The independence complex of a matroid is vertex decomposable, and therefore, shellable.
\end{proposition}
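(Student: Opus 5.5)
The plan is to follow Provan and Billera's argument, which goes through vertex decomposability. First I will recall the recursive definition. A pure complex $\Delta$ is vertex decomposable if it is a simplex (including $\{\varnothing\}$), or if some vertex $v$ satisfies two conditions: both the deletion $\Delta\setminus v=\{F\in\Delta : v\notin F\}$ and the link $\mathrm{lk}_\Delta(v)=\{F : v\notin F,\ F\cup\{v\}\in\Delta\}$ are vertex decomposable, and the deletion is pure of the same dimension as $\Delta$. Such a $v$ is called a shedding vertex.

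The first step is to identify deletion and link with matroid operations. For $\Delta=\calI(\M)$ and $e\in E$ not a coloop, $\Delta\setminus e$ is the independence complex of the deletion $\M\setminus e$. This matroid still has rank $\rk_{\M}(E)$, since a basis avoiding $e$ exists, so the deletion is pure of full dimension. The link $\mathrm{lk}(e)$ is the independence complex of the contraction $\M/e$, using that $e$ is a nonloop. Both $\M\setminus e$ and $\M/e$ are matroids on the smaller ground set $E\setminus\{e\}$. Their independence complexes are pure because all bases of a matroid have the same size, as follows from the augmentation property in \Cref{def:matroid-independent-sets}. Loops of $\M/e$ are harmless: they are simply not vertices of the complex, so one may delete them first.

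The second step is induction on $|E|$. If every element of $E$ is a coloop, then $E$ is independent and $\calI(\M)$ is a simplex. Otherwise choose a non-coloop $e$ and apply the inductive hypothesis to $\M\setminus e$ and to $\M/e$; this shows $e$ is a shedding vertex. The only point needing care is the base of the contraction case, where $\M/e$ may have loops or be empty. This is handled by discarding loops, which leaves the complex unchanged, and by noting that $\{\varnothing\}$ counts as a simplex.

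The final step is to show that vertex decomposability implies shellability, again by induction. Suppose shellings of $\Delta\setminus v$ and $\mathrm{lk}(v)$ are given. List the facets of $\Delta\setminus v$ in their shelling order, then list the facets $G\cup\{v\}$ for $G$ running through the shelling order of $\mathrm{lk}(v)$. Purity of $\Delta\setminus v$ in full dimension guarantees that every $G\in\mathrm{lk}(v)$ lies in some facet of the deletion. From this, the intersection of each new facet $G\cup\{v\}$ with the earlier facets is generated by the codimension-one faces $G$ together with $(G'\cup\{v\})$ coming from the link's own shelling intersections. Hence the intersection is pure of codimension one, which is the shelling condition. I expect this verification, confirming purity of the intersection for the $v$-containing facets, to be the main, though standard, obstacle.
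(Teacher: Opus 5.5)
Your proposal is correct and is the standard Provan--Billera argument, which the paper cites without proof: a non-coloop $e$ is a shedding vertex because the deletion and link are the independence complexes of $\M\setminus e$ (same rank) and $\M/e$, and a shelling is obtained by listing the deletion's facets first, followed by the cone $\{v\}\ast$ over the link's shelling. One small precision: a face $G\in\mathrm{lk}(v)$ lies in some facet of $\Delta\setminus v$ regardless of purity. What purity in full dimension actually ensures is that those facets are facets of $\Delta$, so $G$ is contained in an earlier facet of your list.
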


For each integer $i\in \{0,1,\ldots,d\}$, we denote by $\Delta(i)$ the simplices of cardinality $i$ (that is, of dimension $i-1$), and we denote by $f_i(\Delta)$ the number of simplices in $\Delta(i)$.
The \emph{$f$-vector} of $\Delta$ is the $(d+1)$-tuple $(f_0(\Delta), f_1(\Delta), \dots, f_{d}(\Delta))$. The \emph{$h$-vector} of $\Delta$, denoted $(h_0(\Delta),\ldots,h_{d}(\Delta))$, is defined by the following equation
\[
  \sum_{i=0}^d f_{i}(\Delta)(y-1)^{d-i}
  \;=\;
  \sum_{i=0}^d h_i(\Delta) y^{d-i}.
\]
The $f$-polynomial $f(\Delta; x)$ and the $h$-polynomial $h(\Delta; y)$ are defined as
\[
  f(\Delta; x) \coloneqq \sum_{i=0}^d f_{i} \: x^i,
  \quad \text{ and } \quad
  h(\Delta; y) \coloneqq \sum_{i=0}^d h_i \: y^i.
\]
It is a classical result that shellability of a simplicial complex implies the nonnegativity of its $h$-vector. See \cite{stanley1996combinatorics}.
We are interested in \emph{vertex-weighted simplicial complexes}, and their \emph{weighted face enumeration polynomials}, in the following sense.
Let $w \colon \Delta(1) \to \Z_{\geqslant 1}$ be a weight function on the vertices of $\Delta$. It induces a weight function on all faces:
\[
w(I) \coloneqq \prod_{v \in I} w(v), \text{ with } w(\varnothing) = 1.
\] For each integer $i \in \{0, 1, \ldots, d\}$, the \emph{weighted $f$-vector} is the sum of weights over all simplices in $\Delta$ of size $i$:
\[
f_i(\Delta, w) = \sum_{I \in \Delta(i)} w(I).
\] Similarly, the \emph{weighted $h$-vector} is defined by the equation
\[
\sum_{i = 0}^{d} f_i(\Delta, w)(y-1)^{d-i} = \sum_{i = 0}^{d} h_i(\Delta, w) y^{d-i}.
\] When $w$ is the constant weight $1$, $f_i(\Delta, w) = f_i(\Delta)$, and $h_i(\Delta, w) = h_i(\Delta)$.
The weighted face enumeration polynomials are defined analogously,
\[
f(\Delta, w; x) \coloneqq \sum_{i = 0}^{d} f_i(\Delta, w) \: x^{i} = \sum_{I \in \Delta} w(I) \: x^{\abs{I}},
\qquad
h(\Delta, w; y) \coloneqq \sum_{i = 0}^{d} h_i(\Delta, w) \: y^{i}.
\]
Substituting $y$ with $1/y$ in the defining equation of the weighted $h$-vector gives
\begin{equation}
  \label{eq:weighted-h}
  h(\Delta, w; y)
  \;=\;
  \sum_{I \in \Delta} w(I) \: y^{\abs{I}} (1-y)^{d - \abs{I}}
  \;=\;
  (1-y)^{d} \: f\Big(\Delta, w; \frac{y}{1-y}\Big),
\end{equation}
and
\begin{equation}
  \label{eq:weighted-f}
  f(\Delta, w; x) = \sum_{i} h_i(\Delta, w) \; x^i (1 + x)^{d-i}.
\end{equation}
In other words, the weighted $h$-vector is the coefficient vector of the weighted $f$-polynomial in the magic basis.
A shelling gives the following weighted formula for the $h$-vector.
\begin{lemma}\label{lem:weighted-h}
  Let $\Delta$ be a pure, $(d-1)$-dimensional, shellable complex, and let
  $w \colon \Delta(1) \to \Z_{\geqslant 1}$ be a weight function on the vertices of $\Delta$.  Then the weighted $h$-polynomial
  \[
  h(\Delta, w; y) \;=\; \sum_{B \in \Delta(d)} w(\rho(B))\, y^{|\rho(B)|} \prod_{v \in B \setminus \rho(B)} \big(1 + (w(v) - 1) y\big),
  \]  In particular, the weighted
  $h$-vector $h(\Delta, w)$ is nonnegative.
\end{lemma}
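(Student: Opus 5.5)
Since a shelling partitions $\Delta$ into the disjoint intervals \eqref{eq:shelling-partition}, I will compute the weighted $f$-polynomial interval by interval and then convert to the $h$-polynomial via \eqref{eq:weighted-h}. Fix a shelling $B_1,\dots,B_\ell$ with restriction map $\rho$. Each face $I\in\Delta$ lies in exactly one interval $[\rho(B_k),B_k]$. Weights are multiplicative, so the contribution of one interval factors:
\[
\sum_{\rho(B)\subseteq I\subseteq B} w(I)\,x^{|I|}
= w(\rho(B))\,x^{|\rho(B)|}\prod_{v\in B\setminus\rho(B)}\big(1+w(v)x\big).
\]
Summing over facets gives $f(\Delta,w;x)$ as a sum of these products.

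Next I apply \eqref{eq:weighted-h}: $h(\Delta,w;y)=(1-y)^d f(\Delta,w;y/(1-y))$. Each facet has $|B|=d$, so I distribute the factor $(1-y)^d$ over the terms of the product. Specifically, $(1-y)^{|\rho(B)|}$ cancels the denominator of $x^{|\rho(B)|}$, turning it into $y^{|\rho(B)|}$. Each of the remaining $|B\setminus\rho(B)|$ factors of $(1-y)$ pairs with one factor $1+w(v)x$. For each such pairing,
\[
(1-y)\Big(1+w(v)\tfrac{y}{1-y}\Big)=1-y+w(v)y=1+(w(v)-1)y.
\]
This yields exactly the claimed formula.

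For nonnegativity, the hypothesis $w(v)\geqslant 1$ ensures each factor $1+(w(v)-1)y$ has nonnegative coefficients, and $w(\rho(B))\geqslant 1$. Hence every summand has nonnegative coefficients, and so does their sum. The only point needing care is the disjointness of the decomposition \eqref{eq:shelling-partition}, which is the standard property of shellings and is already recorded above; the rest is the routine substitution. No real obstacle is expected.
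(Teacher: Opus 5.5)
Your proof is correct and follows essentially the same route as the paper: both sum over the disjoint intervals $[\rho(B),B]$ of a shelling and factor each interval's contribution into $w(\rho(B))\,y^{|\rho(B)|}\prod_{v\in B\setminus\rho(B)}\big((1-y)+w(v)y\big)$. The only difference is the order of the steps. You factor the weighted $f$-polynomial first and then substitute $x=y/(1-y)$, whereas the paper factors $\sum_{I} w(I)\,y^{|I|}(1-y)^{d-|I|}$ directly over each interval.
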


\begin{proof}
  Fix a shelling with restriction map $\rho$. For a facet $B$ and
  $R=\rho(B)$, the contribution of the interval $[R,B]$ to
  \eqref{eq:weighted-h} is
  \[
    \sum_{R\subseteq I\subseteq B}w(I)y^{|I|}(1-y)^{d-|I|}
    =w(R)y^{|R|}\prod_{v\in B\setminus R}\big((1-y)+w(v)y\big).
  \]
  The identity follows by summing over the disjoint intervals in \eqref{eq:shelling-partition}.
  Since each factor has nonnegative coefficients with $w(v) \geqslant 1$, the coefficients of monomials in $y$ are nonnegative.
\end{proof}
When $w(v)=1$ for every vertex, the formula above recovers the classical fact that the $h$-vector of a
shellable complex is nonnegative, with $h_i$ counting the facets whose
restriction has size $i$.

\subsection{$K$-rings and Euler characteristics}
\label{sec:prelim-K}
In this subsection, we recall the basic facts about the $K$-ring and the Chow ring of a matroid.

Let $\M$ be a loopless matroid of rank $r$ on $E$, and let $X_{\M}$ be the smooth and generally incomplete toric variety associated to the Bergman fan of $\M$. The Chow ring of $\M$ is a graded algebra $A(\M)$, introduced by Feichtner and Yuzvinsky \cite{feichtner2004chow}, as the Chow ring of the toric variety
associated to the Bergman fan of $\M$. It has a degree map $\deg_{\M} \colon A^{r-1}(\M) \to \Z$ with respect to which the ring satisfies Poincar\'{e} duality.
In \cite{backman2024simplicial}, Backman, Eur, and Simpson introduced a simplicial presentation of the Chow ring, \footnote{The presentation here is a simplified version, but equivalent to the original simplicial presentation given by \cite{backman2024simplicial}; see \cite[Appendix]{larson2024k}.}
\[
A(\M) = \frac{\Z[h_F \mid \varnothing \ne F \in \calL(\M)]}{\langle h_A \mid \rk A=1 \rangle + \langle (h_F-h_{F\vee G})(h_G-h_{F\vee G}) \mid F,G \text{ incomparable}\rangle}.
\]
Similarly, the $K$-ring of $\M$ is defined as the Grothendieck $K$-ring of vector bundles (or equivalently, coherent sheaves) of $X_{\M}$.
It is generated by the classes $\cL_F$ of line bundles indexed by the
nonempty flats $F$ of $\M$ \cite[Lemma 1.20]{larson2024k}, and a ring map 
\[
\zeta_{\M} \colon K(\M) \to A(\M), \qquad 1 - [\calL_{F}]^{-1} \mapsto h_F = c_1(\calL_F)
\] gives an integral \emph{exceptional isomorphism} between $K(\M)$ and $A(\M)$ \cite[Theorem
1.16]{larson2024k}. Furthermore,
there is an \emph{Euler
characteristic} on $K(\M)$, defined by $\zeta_{\M}$ and the degree map
$\deg_{\M}$ by the Hirzebruch--Riemann--Roch-type formula
\begin{equation}\label{eq:HRR}
  \chi(\M, \xi) \;\coloneqq\; \deg_{\M}\!\big( \zeta_{\M}(\xi) \cdot \tau_{\M} \big),
  \qquad \tau_{\M} \;\coloneqq\; 1 + \alpha + \cdots + \alpha^{r-1},
\end{equation}
where $\alpha=h_E\in A^1(\M)$. 
Equivalently, we have the commutative triangle
\[
  \begin{tikzcd}[row sep=2em, column sep=1.4em]
    K(\M) \arrow[rr, "\zeta_{\M}"] \arrow[dr, "{\chi(\M,-)}"'] & & A(\M) \arrow[dl, "{\deg_{\M}(\,-\cdot\,\tau_{\M})}"] \\
    & \Z &
  \end{tikzcd}
\] When $\M$ is realized by a central and essential hyperplane arrangement $\calA$, the Euler characteristic $\chi(\M, \xi)$ coincides with the sheaf Euler characteristic of
the wonderful compactification $W_{\calA}$, introduced by De Concini and Procesi \cite{deconcini1995wonderful}. In that case, the exceptional isomorphism of \eqref{eq:HRR} is distinct from the classical Chern character, In the realizable case, $\tau_{\M}$ is distinct from the classical Todd class of the tangent bundle of $W_{\calA}$, and furnishes an alternative computation of Euler characteristics for wonderful varieties.

A \emph{lattice generalized permutohedron} $P \subseteq \R^{E}$ with $n = \abs{E} \geqslant 1$ is a lattice polytope whose only edge directions are $e_{i} - e_j$.
Its normal fan coarsens the $(n-1)$-dimensional permutohedral fan
$\Sigma_{n-1}$. The line bundles in $K(\M)$ are the Laurent monomials
$\bigotimes_{F} \cL_F^{\otimes a_F}$ in the generators above, indexed by integer
vectors $a = \begin{pmatrix}a_F\end{pmatrix}$ on the nonempty flats. Every lattice generalized permutohedron $P$ gives such a vector $a$ in the following way. Let the support function $\sigma \colon 2^{E} \to \Z$ of $-P$ be defined as
\[
  \sigma_P(S) \;\coloneqq\; -\min_{x \in P} \sum_{i \in S} x_i
\]
and let $\omega_{\M, P}$ be its M\"obius inversion along
the lattice of flats of $\M$: for every flat $F$ of rank at least $2$,
\begin{equation}\label{eq:moebius}
  \omega_{\M, P}(F) \;\coloneqq\; -\sum_{G \leqslant F} \mu_{\M}(G,F)\, \sigma_P(G)
\end{equation}
where $\mu_{\M}$ is the M\"obius function of $\calL(\M)$. We set
\begin{equation}\label{eq:LP-flats}
  \cL_P \;\coloneqq\; \bigotimes_{F} \cL_F^{\otimes \omega_{\M, P}(F)} \;\in\; K(\M),
  \qquad
  \chi_{\M,P}(q) \;\coloneqq\; \chi\big(\M, \cL_P^{\otimes q}\big) \in \Q[q].
\end{equation}
For a flat $A$ of rank $1$, the relation $h_A=0$ in the Chow ring $A(\M)$ implies that $\cL_A=1$ in $K(\M)$.
Therefore, only line bundles $\cL_F$ given by flats at least $2$ contribute to $\cL_P$ for any $P$. 
The polynomial is integer-valued, although its monomial coefficients need
not be integers.  Another description using a classical result of
Ardila, Benedetti and Doker \cite[Proposition 2.4]{ardila2010matroid} is as follows. Every
generalized permutohedron is uniquely a signed Minkowski sum of the standard
simplices $\Delta_S = \operatorname{conv}\{e_i \mid i \in S\}$,
\begin{equation}\label{eq:signed-minkowski}
  P \;=\; \sum_{S} y_S\, \Delta_S ,
  \qquad
  y_S = \sum_{T \subseteq S} (-1)^{\abs{S} - \abs{T}} \min_{x \in P} x(T) ,
\end{equation}
and every $y_S$ is an integer when $P$ is a lattice polytope.  Writing
$\cL_S \coloneqq \cL_{\cl_{\M}(S)}$ for a nonempty subset $S \subseteq E$, set
\begin{equation}\label{eq:LP}
  \cL_P \;=\; \bigotimes_{S} \cL_{S}^{\otimes y_S} \;\in\; K(\M) .
\end{equation}

The following lemma implies that \eqref{eq:LP-flats} and \eqref{eq:LP} define the same class.
\begin{lemma}
  \label{lem:support-general}
  Let $\M$ be a loopless matroid, and $P = \sum_{S} y_S\,\Delta_S$ with $y_S \in \Z$. For every flat $F$ of rank at least $2$,
  \[
    \omega_{\M, P}(F) \;=\; \sum_{\cl_{\M}(S) = F} y_S .
  \]
\end{lemma}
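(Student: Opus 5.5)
The plan is to write $-\sigma_P$ restricted to flats as a zeta-transform over $\calL(\M)$ of the function $F \mapsto \sum_{\cl_{\M}(S)=F} y_S$, and then invert it with the Möbius function of $\calL(\M)$, which is exactly the definition \eqref{eq:moebius}.

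\emph{Step 1 (support function of a signed Minkowski sum).} I start from the formula for $y_S$ in \eqref{eq:signed-minkowski}. It is the Möbius inversion on the Boolean lattice $2^E$ of the function $T \mapsto \min_{x\in P} x(T)$. Inverting back gives, for every $T \subseteq E$,
\[
  \min_{x \in P} x(T) \;=\; \sum_{\varnothing \neq S \subseteq T} y_S .
\]
Here $\min_{x\in P}x(\varnothing)=0$, so the term $y_\varnothing$ is absent, consistent with the decomposition using only nonempty $S$. Hence $-\sigma_P(G)=\sum_{\varnothing\ne S\subseteq G} y_S$ for every $G \subseteq E$. For honest Minkowski sums this can be seen directly: the minimum of $x(G)$ over $\Delta_S$ is $1$ if $S\subseteq G$ and $0$ otherwise, and minima of linear functionals add under Minkowski sums. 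In the signed case I rely on the Boolean inversion above, which makes no positivity assumption.

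\emph{Step 2 (regrouping by closure).} Fix a flat $G$ and a nonempty $S$. Then $S \subseteq G$ if and only if $\cl_{\M}(S) \subseteq G$, since $G$ is closed. Define $g(H) \coloneqq \sum_{\cl_{\M}(S)=H} y_S$ for each flat $H$. Grouping the sum in Step 1 by closure gives
\[
  -\sigma_P(G) \;=\; \sum_{H \leqslant G} g(H), \qquad G\in\calL(\M).
\]
The bottom flat of $\calL(\M)$ is $\varnothing$, because $\M$ is loopless. Every nonempty $S$ has nonempty closure, so $g(\varnothing)=0$, and also $\sigma_P(\varnothing)=0$.

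\emph{Step 3 (Möbius inversion on $\calL(\M)$).} Applying Möbius inversion to the identity of Step 2 yields $g(F)=\sum_{G\leqslant F}\mu_{\M}(G,F)\,(-\sigma_P(G))$. This equals $\omega_{\M,P}(F)$ by \eqref{eq:moebius}, which proves the lemma for every flat, in particular for flats of rank at least $2$.

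There is no real obstacle here; the only point needing care is Step 1. One must justify additivity of the support function for \emph{signed} Minkowski sums, where $P$ is not literally a sum of the pieces. This is handled by reading \eqref{eq:signed-minkowski} as a Möbius inversion rather than appealing to geometry.
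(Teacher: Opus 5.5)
Your proof is correct and is essentially the paper's argument. The paper reduces by additivity to a single simplex $\Delta_S$, computes $\sigma_{\Delta_S}(T)=-1$ if $S\subseteq T$ and $0$ otherwise, and uses $\sum_{\cl_{\M}(S)\leqslant G\leqslant F}\mu_{\M}(G,F)=\delta_{F,\cl_{\M}(S)}$; this is the same zeta-transform and Möbius-inversion computation that you carry out on the whole sum at once, and your reading of \eqref{eq:signed-minkowski} as a Boolean Möbius inversion gives a slightly cleaner justification of the additivity for signed Minkowski sums that the paper simply asserts.
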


\begin{proof}
  Since $\sigma_{P}$ and $\omega_{\M, P}$ are additive functions on $P$, and so
  is the desired identity, it suffices to consider $P = \Delta_S$.  The minimum of
  $x(T)$ over $\Delta_S$ is attained at a vertex $e_i$ with $i \in S$, and
  $e_i(T) = 1$ precisely when $i \in T$; so that minimum is $1$ if
  $S \subseteq T$ and $0$ otherwise, whence $\sigma_{\Delta_S}(T) = -1$ if
  $S \subseteq T$ and $\sigma_{\Delta_S}(T) = 0$ otherwise.

  For a flat $G$ of $\M$, we have $S \subseteq G$ if and only if
  $\cl_{\M}(S) \leqslant G$ in the lattice of flats. Thus, in the M\"{o}bius
  transformation \eqref{eq:moebius}, only the flats above $\cl_{\M}(S)$
  contribute, and
  \[
    \omega_{\Delta_S}(F) \;=\;
    \sum_{\cl_{\M}(S) \,\leqslant\, G \,\leqslant\, F} \mu_{\M}(G,F).
  \]
  The defining identity for the M\"obius function makes this sum equal to
  $1$ when $F=\cl_{\M}(S)$ and $0$ otherwise. Additivity, including signed
  Minkowski sums, implies the result.
\end{proof}

We write $\omega(F) = \omega_{\M, P}(F)$, and for a set $I$ of flats of rank at least $2$, set 
\[
\supp(\omega) \coloneqq  \{F : \omega_{\M, P}(F) \neq 0\}, \qquad \omega(I) \coloneqq \omega_{\M, P}(I) = \prod_{F \in I} \omega_{\M, P}(F). 
\]

\begin{example}
When $\M$ is the Boolean matroid $\B_n$, every subset of $[n]$ is a flat and
$\mu(G,F) = (-1)^{\abs{F} - \abs{G}}$, the M\"obius transform in \eqref{eq:moebius} becomes
\[
  \omega_{\B_n, P}(S) \;=\; \sum_{G \subseteq S} (-1)^{\abs{S}-\abs{G}} \min_{x \in P} x(G)
\]
which gives the coefficients, for $|S|\geqslant2$, of the unique signed Minkowski decomposition $P = \sum_{S} y_S\, \Delta_S$
into standard simplices $\Delta_S = \operatorname{conv}\{e_i : i \in S\}$ by Ardila, Benedetti and Doker \cite[Proposition 2.4]{ardila2010matroid}.
For $q\geqslant0$, the higher cohomology of $\cL_P^{\otimes q}$ on the
permutohedral toric variety vanishes, and its global sections have a basis
indexed by $qP\cap\Z^n$. Consequently, $\chi_{\B_n,P}$ is the Ehrhart
polynomial of $P$ \cite[Section 9.4]{cox2011toric}.
\end{example}

Via the Hirzebruch--Riemann--Roch-type formula in \eqref{eq:HRR}, the Euler characteristic $\chi_{\M, P}(q)$ can be computed using intersection numbers of the generators $h_F$.
Backman, Eur and
Simpson \cite[Proposition 5.2.3]{backman2024simplicial}  give an elegant description of the intersection numbers,
relying on a condition that is a matroid generalization of Postnikov's dragon marriage
condition \cite[\S 5, \S 9]{postnikov2009permutohedra}. A tuple $k = (k_F)_F$ of
nonnegative integers indexed by the nonempty flats of $\M$ is \emph{dragon
Hall--Rado}, if for every nonempty $J \subseteq \supp(k)$,
\begin{equation}\label{eq:dHR}
  \rk_{\M}\Big(\bigcup_{F \in J} F\Big) \;\geqslant\; \Big(\sum_{F \in J} k_F\Big) + 1
  \tag{dragon Hall--Rado}.
\end{equation}
We write $\DHR(\M)$ for the set of all such tuples.  Taking $J = \{F\}$ in
\eqref{eq:dHR} gives the \emph{capacity} $k_F \leqslant \rk_{\M} F - 1$; in particular
$k_F = 0$ on flats of rank $1$.  Their intersection formula is the following.

\begin{theorem}[Backman--Eur--Simpson {\cite[Theorem 5.2.4]{backman2024simplicial}}]\label{thm:bes-intersection}
  Let $\M$ be a loopless matroid of rank $r$, let $F_1,\ldots,F_{r-1}$
  be nonempty flats, and put $k_F=|\{j:F_j=F\}|$. Then
  \[
    \deg_{\M}\big( h_{F_1} \cdots h_{F_{r-1}} \big)
    \;=\;
    \begin{cases}
      1 & \text{if } k \in \DHR(\M), \\
      0 & \text{otherwise.}
    \end{cases}
  \]
\end{theorem}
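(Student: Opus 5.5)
The plan is to prove the formula by induction on the rank $r$, with \emph{restriction to divisors} as the engine. In rank $r=1$ the monomial is empty, its degree is $1$, and the empty tuple is vacuously dragon Hall--Rado. In general, $\deg_{\M}(\alpha^{r-1})=1$ for $\alpha=h_E$, and the tuple $k_E=r-1$ satisfies \eqref{eq:dHR}. This fixes the normalization.

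For the inductive step I would pass to the Feichtner--Yuzvinsky presentation, with generators $x_G$ indexed by proper nonempty flats $G$. I would use the fact that $h_F$ is the class of the nef divisor attached to the simplex $\Delta_F$. Reading off its support function (as in the proof of \Cref{lem:support-general}, where $\sigma_{\Delta_F}(T)=-1$ exactly when $F\subseteq T$), I expect an expression of the form
\[
h_F \;=\; -\sum_{G\supseteq F,\ G\neq E} x_G \qquad\text{modulo the linear relations of } A(\M).
\]
Choosing a flat $F_1$ with $k_{F_1}>0$, I replace one copy of $h_{F_1}$ by this expression. Each product $x_G\cdot h_{F_2}\cdots h_{F_{r-1}}$ is then computed on the divisor $D_G\cong X_{\M|G}\times X_{\M/G}$, so the degree factors as $\deg_{\M|G}\cdot\deg_{\M/G}$. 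The key compatibility to check is that $h_{F'}$ restricts to $D_G$ as $h_{F'}$ on $\M|G$ when $F'\subseteq G$, and as $h_{(F'\vee G)/G}$ on $\M/G$ otherwise. Here a trivial class arises when $F'\vee G=G$ but $F'\not\subseteq G$; in that case the restriction is a pullback of $h_{F'\cap G}$, and I would nail this down carefully. By induction, each surviving product is $1$ exactly when the induced tuples on $\M|G$ and on $\M/G$ are both dragon Hall--Rado, and the degree constraint forces the number of $F_j$ inside $G$ to equal $\rk G-1$.

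The remaining step is purely combinatorial, and it is where I expect the difficulty. One must show that the signed sum $-\sum_G [\text{induced tuples on } G \text{ and } E/G \text{ are dHR}]$ equals $1$ if $k\in\DHR(\M)$ and $0$ otherwise. A cleaner route is to choose $F_1$ and a sign convention so that at most one $G$ contributes. Since the relevant sets are unions of flats, I would take $G$ to be the closure of a maximal \emph{tight} set, namely a subset $J$ of the support with $\rk(\bigcup J)=\sum_J k_F+1$. Submodularity of $J\mapsto \rk(\bigcup J)-1-\sum_J k_F$ makes tight sets closed under union when they intersect. If no such canonical telescoping appears, my fallback is the polytope route. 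Here $\deg(h_{F_1}\cdots h_{F_{r-1}})$ equals the mixed volume of the simplices $\Delta_{F_j}$ intersected with the Bergman fan. Postnikov's dragon marriage argument, which is a Hall--Rado transversal count for the matroid $\M_f$ from \Cref{thm:edmonds} with $f=\rk-1$, then shows that this mixed degree is $1$ exactly under \eqref{eq:dHR}. I expect this Hall--Rado bookkeeping across $\M|G$ and $\M/G$ to be the main obstacle.
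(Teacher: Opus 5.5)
The paper gives no proof of this statement; it quotes it from Backman--Eur--Simpson. Your argument therefore has to stand on its own, and it breaks at the first formula.

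\textbf{The expansion of $h_F$ is wrong.} In the presentation on proper flats, $h_F$ is not $-\sum_{F\subseteq G\subsetneq E}x_G$. You have dropped the $G=E$ term, which equals $-x_E=\alpha$. Geometrically, $\Delta_F$ lies in $\{\sum_e x_e=1\}$, so it must be translated (say by $-e_i$) before it defines a piecewise linear function on $\R^E/\R\mathbf 1$. That translation contributes $\sum_{G\ni i}x_G=\alpha$.

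Your formula also contradicts your own normalization. It gives $h_E=0$, whereas $h_E=\alpha$ and $\deg_{\M}(\alpha^{r-1})=1$. It also gives $h_A=-x_A\neq0$ for a rank-one flat $A$ of $\U_{2,3}$, where $h_A$ should vanish.

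As a consequence, the ``purely combinatorial'' step you flag as the main obstacle is not merely hard but false. By induction every summand $\deg_{\M|G}(\cdots)\deg_{\M/G}(\cdots)$ is $0$ or $1$. So $-\sum_G[\cdots]\leqslant 0$, and it can never equal $1$ when $k\in\DHR(\M)$.

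\textbf{How to repair it.} The correct identity is $h_F=\alpha-\sum_{F\subseteq G\subsetneq E}x_G$. Writing $\alpha=\sum_{i\in G\subsetneq E}x_G$ for a fixed $i\in F$ gives the positive expansion
\[
h_F=\sum_{\substack{i\in G\subsetneq E\\ F\not\subseteq G}}x_G .
\]
Your restriction rules are correct; note that $F'\vee G=G$ with $F'\not\subseteq G$ cannot occur for flats. With them, $\deg_{\M}(h_{F_1}\cdots h_{F_{r-1}})$ becomes the number of proper flats $G$ such that:
\begin{itemize}
\item $i\in G$ and $F_1\not\subseteq G$;
\item exactly $\rk_{\M}G-1$ of the $F_j$ with $j\geqslant2$ lie in $G$, counted with multiplicity;
\item both induced tuples are dragon Hall--Rado.
\end{itemize}
You then need this number to be $1$ or $0$ according as $k\in\DHR(\M)$ or not.

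Here the choice of $G$ must be anchored at $i$. Define $g(J)=\rk_{\M}\big(\{i\}\cup\bigcup_{j\in J}F_j\big)-|J|-1$ for $J\subseteq\{2,\ldots,r-1\}$. It is submodular, satisfies $g(\varnothing)=0$, and is $\geqslant0$ under the condition. So its zero set is closed under unions and has a largest element $J^\ast$. Then $G=\cl_{\M}\big(\{i\}\cup\bigcup_{J^\ast}F_j\big)$ is the unique contributing flat. Conversely, applying submodularity of $\rk_{\M}$ against any contributing $G$ shows that $k\in\DHR(\M)$.

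Without the anchor $i$, tight sets with disjoint index sets need not have a tight union. So your ``maximal tight set'' does not single out a flat.

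\textbf{The fallback does not close the gap.} For non-Boolean $\M$ the degree is a tropical intersection number on the Bergman fan, not a mixed volume of polytopes. Postnikov's dragon-marriage argument only covers the permutohedral (Boolean) case.
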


The second condition is the analogue given by \cite[Theorem 9.1]{larson2024k} for
$\Mbar_{0,n}$, or equivalently, the wonderful compactification of the braid arrangement complement with the minimal building set. A
tuple $c = (c_S)$ of nonnegative integers indexed by the subsets
$S \subseteq [n-1]$ with $|S| \geqslant 3$ is \emph{Cerberus}, if for every $0\ne c' \leqslant c$.
\begin{equation}\label{eq:cerberus-cond}
 \Big|\bigcup_{c'_S > 0} S \cup \{n\}\Big|  \;\geqslant\;  \sum_{S} c'_S + 3 
  \tag{Cerberus}
\end{equation}
We write $\Cer(n)$ for the set of Cerberus tuples.

Using the dragon Hall--Rado conditions, we recall the following formula for the Euler characteristic polynomial of the pair $\M, P$.
\begin{theorem}[Larson--Li--Payne--Proudfoot {\cite[Corollary 7.5]{larson2024k}}]\label{thm:dhr}
  Let $\M$ be a loopless matroid and $P$ a lattice generalized permutohedron on
  its ground set.  Then
  \begin{equation}
    \label{eq:dhr}
    \chi_{\M,P}(q) \;=\; \chi\big(\M, \cL_{P}^{\otimes q}\big) \;=\;
    \sum_{k \,\in\, \DHR(\M)} \ \prod_{F} \binom{\omega_{\M, P}(F)\, q + k_F - 1}{k_F}.
  \end{equation}
\end{theorem}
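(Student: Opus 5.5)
The plan is to prove the formula for each integer $q$, which identifies the integer-valued function $q\mapsto\chi(\M,\cL_P^{\otimes q})$ with the polynomial on the right of \eqref{eq:dhr}. The three ingredients are the Hirzebruch--Riemann--Roch-type formula \eqref{eq:HRR}, the exceptional isomorphism $\zeta_{\M}$, and the Backman--Eur--Simpson intersection formula (\Cref{thm:bes-intersection}).

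\textbf{Step 1: compute $\zeta_{\M}$ of the line bundle.} Fix $q\in\Z$ and write $\omega=\omega_{\M,P}$. Since $\zeta_{\M}$ is a ring map with $\zeta_{\M}(1-[\cL_F]^{-1})=h_F$, we get $\zeta_{\M}([\cL_F])=(1-h_F)^{-1}$. This inverse makes sense because $h_F$ is nilpotent in the graded ring $A(\M)$, which vanishes above degree $r-1$. Hence
\[
\zeta_{\M}\big(\cL_P^{\otimes q}\big)=\prod_F (1-h_F)^{-\omega(F)q}=\prod_F\sum_{k_F\geqslant 0}\binom{\omega(F)q+k_F-1}{k_F}h_F^{k_F}.
\]
Here we use the negative binomial identity $(1-x)^{-m}=\sum_{k}\binom{m+k-1}{k}x^k$, valid for every integer $m$ in $\Z[[x]]$, hence also for nilpotent $x$. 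Flats of rank $1$ contribute nothing, since $h_A=0$.

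\textbf{Step 2: apply the degree map.} Multiply by $\tau_{\M}=\sum_{j=0}^{r-1}h_E^j$ and apply $\deg_{\M}$; only terms of total degree $r-1$ survive. The result is a sum over pairs $(k,j)$ with $|k|+j=r-1$. Each pair contributes $\prod_F\binom{\omega(F)q+k_F-1}{k_F}$ times $\deg_{\M}\big(\prod_F h_F^{k_F}\cdot h_E^{j}\big)$. By \Cref{thm:bes-intersection}, this degree equals $1$ if $k+je_E\in\DHR(\M)$ and $0$ otherwise. Since $j=r-1-|k|$ is determined by $k$, the sum is indexed by the tuples $k$ with $|k|\leqslant r-1$ and $k+(r-1-|k|)e_E\in\DHR(\M)$.

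\textbf{Step 3: the combinatorial equivalence.} It remains to show that, for a tuple $k$ with $|k|\leqslant r-1$,
\[
k\in\DHR(\M)\iff k':=k+(r-1-|k|)e_E\in\DHR(\M).
\]
We must also check that every $k\in\DHR(\M)$ satisfies $|k|\leqslant r-1$. That follows by taking $J=\supp(k)$ in \eqref{eq:dHR}, since the union has rank at most $r$.

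For ($\Leftarrow$): the condition \eqref{eq:dHR} is inherited by any $k\leqslant k'$ coordinatewise. Every $J\subseteq\supp(k)$ lies in $\supp(k')$, and the sums can only decrease.

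For ($\Rightarrow$): let $J\subseteq\supp(k')$. If $E\notin J$, then $J\subseteq\supp(k)$ and the condition is the old one. If $E\in J$, the union has rank $r$, and
\[
\sum_{F\in J}k'_F\leqslant |k'|=r-1,
\]
so the inequality $r\geqslant\sum_{F\in J}k'_F+1$ holds.

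This equivalence is the only step I expect to need real care. The subtlety is that $E$ may already lie in $\supp(k)$, so the extra powers of $h_E$ coming from $\tau_{\M}$ merge with $h_E^{k_E}$. This merging is harmless: the binomial factor for $E$ is still indexed by $k_E$ alone, because it comes from $\zeta_{\M}$ and not from $\tau_{\M}$.

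Combining the three steps gives \eqref{eq:dhr} for every integer $q$. Both sides are polynomials in $q$: the right side is a finite sum of products of binomials in $q$, and the left is the Snapper polynomial. Equality for all integers $q$ therefore gives equality in $\Q[q]$.
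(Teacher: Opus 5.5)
Your proof is correct and follows the route the paper relies on. You expand $\zeta_{\M}(\cL_P^{\otimes q})$ as a product of negative binomial series in the $h_F$, apply the Hirzebruch--Riemann--Roch-type formula \eqref{eq:HRR} together with the Backman--Eur--Simpson degree formula, and absorb the powers of $\alpha=h_E$ coming from $\tau_{\M}$ by observing that padding a dragon Hall--Rado tuple with copies of $E$ up to total size $r-1$ preserves the condition. The paper itself does not reprove the statement: it cites \cite[Corollary 7.5]{larson2024k} and indicates exactly this combination of the HRR-type formula with the BES intersection numbers.
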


\subsection{Magic positivity and real-rootedness}
\label{sec:prelim-poly}
Given a polynomial $f(x) \in \R[x]$ of degree $d$, it is \emph{magic positive} if
\[
f(x) = \sum_{i} m_i x^i(1+x)^{d-i}, \text{ such that for every } i,  m_i \geqslant 0.
\] We call $\{x^i(1+x)^{d-i}\}_{i=0}^{d}$ the \emph{magic basis}, following \cite{ferroni2024examples}.
The coefficient vector in the expansion of $f$ in the magic basis is the \emph{magic vector}.
If $f(x) = \sum_{j=0}^{d} c_j x^j \in \R[x]$ is a polynomial of degree $d$, the \emph{magic transform} of $f(x)$ is the polynomial
\begin{equation}\label{eq:magic}
  m(f; y) \;\coloneqq\; (1-y)^{d}\, f\!\Big(\frac{y}{1-y}\Big) \;=\; \sum_{j=0}^{d} c_j\, y^{j} (1-y)^{d-j}.
\end{equation}

\begin{lemma}\label{lem:magic-transform}
  The coefficients of the magic transform $m(f; y)$ in the $y^i$ basis form the magic vector of $f$; that is,
  \[
    f(x) = \sum_{i=0}^{d} m_i\, x^i (1+x)^{d-i}, \qquad
    m(f; y) = \sum_{i=0}^{d} m_i\, y^i.
  \]
  In particular, $f$ is magic positive of degree $d$ if and only if $m(f; y)$ has
  nonnegative coefficients.  In that case
  \[
  m_0 = c_0 = f(0), \qquad m_1 = c_1 - d\,c_0\quad(d\geqslant1), \qquad m_d = (-1)^d f(-1) = h_d ,
  \]
  where $h_d$ is the top coefficient of the $\hstar$-vector of $f$.
\end{lemma}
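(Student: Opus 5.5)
The plan is to expand both sides in the $x^j$ basis and compare. Write $f(x)=\sum_{j=0}^d c_j x^j$. Apply the binomial theorem to each power of $(1+x)$ in the magic basis:
\[
x^i(1+x)^{d-i}=\sum_{j=i}^{d}\binom{d-i}{j-i}x^j .
\]
Thus any expansion $f=\sum_i m_i x^i(1+x)^{d-i}$ is equivalent to a triangular system
\[
c_j=\sum_{i\leqslant j}\binom{d-i}{j-i}m_i .
\]
This system has ones on the diagonal, so it is invertible over $\Z$. Hence the magic basis is a basis, and the magic vector is unique.

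To identify the magic vector with the coefficients of $m(f;y)$, substitute $x=y/(1-y)$, so that $1+x=1/(1-y)$. Then
\[
(1-y)^d\,x^i(1+x)^{d-i}=(1-y)^d\,y^i(1-y)^{-i}(1-y)^{-(d-i)}=y^i .
\]
Multiplying the expansion $f=\sum m_i x^i(1+x)^{d-i}$ by $(1-y)^d$ therefore gives $m(f;y)=\sum_i m_i y^i$. The second expression $\sum_j c_j y^j(1-y)^{d-j}$ in \eqref{eq:magic} follows by applying the same substitution to the monomials. This is exactly the computation behind \eqref{eq:weighted-h} and \eqref{eq:weighted-f}. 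The equivalence between magic positivity and nonnegativity of the coefficients of $m(f;y)$ is then immediate.

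For the special coefficients we read off from $m(f;y)=\sum_j c_j y^j(1-y)^{d-j}$.
\begin{itemize}
\item The coefficient of $y^0$ comes only from $j=0$, so $m_0=c_0=f(0)$.
\item The coefficient of $y^1$ comes from $j=0$, contributing $-d\,c_0$, and from $j=1$, contributing $c_1$. Hence $m_1=c_1-d\,c_0$.
\item The coefficient of $y^d$ is the leading coefficient $\sum_j c_j(-1)^{d-j}=(-1)^d f(-1)$. Alternatively, evaluate $f=\sum m_i x^i(1+x)^{d-i}$ at $x=-1$: only $i=d$ survives, giving $f(-1)=(-1)^d m_d$.
\end{itemize}

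The only step with real content is $m_d=h_d$, the top $\hstar$-coefficient defined by \eqref{eq:Wf-change-of-basis}. Write $f(q)=\sum_i \hstar_i\binom{q+d-i}{d}$ and evaluate at $q=-1$. The term $\binom{d-1-i}{d}$ vanishes for $0\leqslant i\leqslant d-1$, because then $0\leqslant d-1-i<d$. For $i=d$ the term is $\binom{-1}{d}=(-1)^d$. Hence $f(-1)=(-1)^d\hstar_d$, and combining with the previous item gives $m_d=\hstar_d$.

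The obstacle is merely bookkeeping with the generalized binomial $\binom{-1}{d}$ and its sign convention. Apart from that, the argument is a direct substitution.
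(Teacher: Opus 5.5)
Your proof is correct and follows the paper's route: the substitution $x=y/(1-y)$ shows that the magic transform sends $x^i(1+x)^{d-i}$ to $y^i$, and the rest is read off from that. You also spell out what the paper compresses into ``follows from the identities above'': uniqueness of the magic vector via the unitriangular change of basis, the computation of $m_0$, $m_1$, $m_d$, and $m_d=\hstar_d$ by evaluating $\sum_i \hstar_i\binom{q+d-i}{d}$ at $q=-1$.
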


\begin{proof}
  It suffices to prove that the linear transformation $m \colon f \mapsto m(f; y)$ sends $x^{i}(1+x)^{d-i}$ to $y^i$.
  Indeed, substituting $x = y/(1-y)$, so that $1 + x = (1-y)^{-1}$,
  \[
  m (x^i (1+x)^{d-i}; y) = (1-y)^{d}\, \frac{y^i}{(1-y)^i}\, \frac{1}{(1-y)^{d-i}} = y^i.
  \]
  The identity of the coefficients follows from the identities above.
\end{proof}

As a corollary, we see that the magic transform
of a weighted $f$-polynomial is the corresponding weighted $h$-polynomial, by the definition of the weighted $h$-vector in \eqref{eq:weighted-h}.
\begin{corollary}\label{cor:magic-f-to-h}
For any simplicial complex $\Delta$ with weight function $w \colon \Delta(1) \to \Z_{\geqslant 1}$,
\[
h(\Delta, w; y) = m(f(\Delta, w; x);y).
\]
\end{corollary}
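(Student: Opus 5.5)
The statement to prove is \Cref{cor:magic-f-to-h}: $h(\Delta, w; y) = m(f(\Delta, w; x); y)$. The plan is to compare two explicit expansions of these polynomials term by term. One subtlety must be handled first. The magic transform in \eqref{eq:magic} uses $d=\deg f$. The weighted $h$-polynomial in \eqref{eq:weighted-h} instead uses the integer $d$ for which $\Delta$ has faces of size at most $d$, i.e.\ $\dim\Delta = d-1$. Since $w \geqslant 1$, every face of maximal size $d$ has positive weight, so $f_d(\Delta,w) > 0$. Hence $\deg f(\Delta,w;x) = d$ exactly, and the two uses of $d$ agree.

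With the degree fixed, expand $f(\Delta,w;x) = \sum_{I\in\Delta} w(I)\, x^{|I|}$, so its monomial coefficients are $c_j = f_j(\Delta,w)$. Applying \eqref{eq:magic} directly gives
\[
m(f(\Delta,w;x);y) = \sum_{j=0}^{d} f_j(\Delta,w)\, y^j (1-y)^{d-j} = \sum_{I\in\Delta} w(I)\, y^{|I|}(1-y)^{d-|I|}.
\]
The right-hand side is exactly the middle expression in \eqref{eq:weighted-h}.

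It remains to justify \eqref{eq:weighted-h} from the defining relation of the weighted $h$-vector. That relation is
\[
\sum_i f_i(\Delta,w)(y-1)^{d-i} = \sum_i h_i(\Delta,w)\, y^{d-i}.
\]
Substitute $y \mapsto 1/y$ and multiply both sides by $y^d$. On the right, $y^d \cdot y^{-(d-i)} = y^i$, giving $\sum_i h_i(\Delta,w)\, y^i = h(\Delta,w;y)$. On the left, $y^d (1/y - 1)^{d-i} = y^i (1-y)^{d-i}$, giving $\sum_i f_i(\Delta,w)\, y^i (1-y)^{d-i}$. This is the expression obtained in the previous step, which proves the identity.

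As a consistency check, \Cref{lem:magic-transform} then says the weighted $h$-vector is the magic vector of $f(\Delta,w;x)$, which is \eqref{eq:weighted-f}. There is no real obstacle in this argument. The only point needing care is the degree bookkeeping in the first step, which relies on the positivity of the weights.
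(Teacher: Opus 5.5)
Your proposal is correct and follows the paper's own justification, which is simply to compare the magic transform \eqref{eq:magic} with the expansion \eqref{eq:weighted-h} of the weighted $h$-polynomial. You also do two things the paper leaves implicit: you derive \eqref{eq:weighted-h} from the defining relation of the weighted $h$-vector, and you use $w\geqslant 1$ to check that $\deg f(\Delta,w;x)$ equals the $d$ appearing in \eqref{eq:weighted-h}.
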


For a polynomial $f\in\R[x]$ of degree $d$, let $\W f$ be determined by
\begin{equation}\label{eq:W}
 \sum_{q\geqslant0}f(q)t^q=\frac{(\W f)(t)}{(1-t)^{d+1}}.
\end{equation}
Thus $\W\chi_{\M,P}=\hstar_{\M,P}$. For a fixed $d$, the numerator on the
right is a linear function of $f$ among polynomials of degree at most $d$.

For $0\leqslant i\leqslant d$, define the refined Eulerian polynomial
\[
 A_{d,i}(t)=\sum_{\substack{\sigma\in S_{d+1}\\\sigma(1)=i+1}}
 t^{\des(\sigma)},
 \qquad
 \des(\sigma)=\big|\{j:\sigma(j)>\sigma(j+1)\}\big|.
\]
Their sum is the Eulerian polynomial $A_{d+1}(t)$, where
$A_d(t)=\sum_{\sigma\in S_d}t^{\des(\sigma)}$.

\begin{proposition}[{\cite[Theorem 4.2]{beck2019h}}]\label{prop:halfopen}
For $0\leqslant i\leqslant d$,
\[
 \W\big(x^i(1+x)^{d-i}\big)=A_{d,i}(t).
\]
\end{proposition}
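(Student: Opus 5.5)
The plan is to prove the identity by a direct lattice-point count. The idea is to interpret $x^i(1+x)^{d-i}$ at $x=q$ as the number of lattice points in the $q$-th dilate of a half-open unit cube, and then decompose that cube into half-open simplices indexed by permutations. Concretely, for $q\geqslant 0$ the number $q^i(q+1)^{d-i}$ counts integer vectors $(x_1,\dots,x_d)\in\{0,\dots,q\}^d$ with $x_j\geqslant 1$ for $1\leqslant j\leqslant i$. In other words, it counts the lattice points of $qC_i$, where
\[
C_i=(0,1]^i\times[0,1]^{d-i}.
\]
So we must show
\[
\sum_{q\geqslant0}\#\big(qC_i\cap\Z^d\big)\,t^q=\frac{A_{d,i}(t)}{(1-t)^{d+1}}.
\]

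First I would sanity-check the two extreme cases against the claimed answer. For $i=0$, the series $\sum_q (q+1)^dt^q$ equals $A_d(t)/(1-t)^{d+1}$. This matches, because permutations $\sigma\in S_{d+1}$ with $\sigma(1)=1$ never descend at position $1$, and they are equidistributed in descents with $S_d$. For $i=d$, the series $\sum_q q^dt^q$ equals $tA_d(t)/(1-t)^{d+1}$. This also matches, because $\sigma(1)=d+1$ forces exactly one extra descent at position $1$.

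For general $i$, I would adjoin auxiliary coordinates so that the problem becomes one about the standard unimodular triangulation of the cube. Set $x_0$ to be the threshold separating the $i$ coordinates required to be positive from the $d-i$ coordinates allowed to vanish. Equivalently, work with the $d+1$ letters $\{1,\dots,d+1\}$, where the letter $i+1$ plays the role of the "zero level". A lattice point of $qC_i$ is then encoded by sorting its coordinates, together with the zero marker, in weakly decreasing order, breaking ties by a fixed rule on indices. This produces a permutation $\sigma\in S_{d+1}$ and a weakly decreasing sequence of gaps summing to $q$. The tie-breaking rule must be chosen so that two things happen. First, the positivity constraints on $x_1,\dots,x_i$ become strict inequalities exactly at the forced positions. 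Second, the sequence always begins with the letter $i+1$, so that $\sigma(1)=i+1$. Each such $\sigma$ indexes a half-open unimodular simplex $\Delta_\sigma$ in which exactly $\des(\sigma)$ of the defining inequalities are strict. The half-open simplices $\Delta_\sigma$ with $\sigma(1)=i+1$ should then partition $C_i$. By the standard computation for half-open unimodular simplices (a stars-and-bars count of gap sequences), each $\Delta_\sigma$ has Ehrhart series $t^{\des(\sigma)}/(1-t)^{d+1}$. Summing over $\sigma$ gives $A_{d,i}(t)/(1-t)^{d+1}$, as required.

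The main obstacle is getting the tie-breaking convention exactly right. The strict inequalities forced on each simplex must coincide with the descents of $\sigma$, rather than, say, its ascents or descents of $\sigma^{-1}$. At the same time the marker letter $i+1$ must land in the first position rather than elsewhere. I would fix the convention by matching the $i=0$ and $i=d$ cases above and small examples such as $d=2$. If the geometric bookkeeping proves awkward, the fallback is an algebraic induction. In that approach I would compare the recurrence
\[
x^{i+1}(1+x)^{d-i-1}=x^i(1+x)^{d-i}-x^i(1+x)^{d-i-1}
\]
with the known recurrence for the refined Eulerian numbers $A_{d,i}$, obtained by removing the first letter of $\sigma$, and check the base case $i=0$ directly.
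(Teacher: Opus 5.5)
Your plan is correct in substance. It is the argument behind the half-open cube interpretation the paper records just after the statement; the paper gives no proof of its own and defers to Beck--Jochemko--McCullough.

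The step you flag as the obstacle is, however, the entire content of the proof, and the one concrete choice you make there does not work. If the marker letter $i+1$ carries the value $0$ and you sort weakly decreasingly, the marker lies in the last tie-block, so no tie-breaking rule can make $\sigma(1)=i+1$. The convention that works is to sort weakly increasingly and list tied letters in increasing order. Give the letters $1,\dots,i$ the values $x_1,\dots,x_i$, the letters $i+2,\dots,d+1$ the values $x_{i+1},\dots,x_d$, and the marker $i+1$ the value $0$. Then write $0=v_{\sigma(1)}\leqslant\cdots\leqslant v_{\sigma(d+1)}\leqslant q$. Only letters larger than $i+1$ can tie with the marker, so $\sigma(1)=i+1$. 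Ties create only ascents, so the inequality at every descent of $\sigma$ is strict. Conversely, take $\sigma$ with $\sigma(1)=i+1$ and a chain that is strict at the descents. A letter $a\leqslant i$ automatically receives a value $\geqslant1$, because the word must descend somewhere between the letter $i+1$ and the letter $a$. So this is a bijection onto the lattice points of $qC_i$. Write $q$ as the sum of the $d+1$ gaps, $\des(\sigma)$ of which must be at least $1$. Then each $\sigma$ contributes $\binom{q-\des(\sigma)+d}{d}$ points, whose generating series is $t^{\des(\sigma)}/(1-t)^{d+1}$. Summing over $\sigma$ gives the identity.

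Your algebraic fallback also works once the bookkeeping is made precise. The polynomial $x^i(1+x)^{d-1-i}$ has degree $d-1$. In the degree-$d$ normalization of $\W$ it therefore goes to $(1-t)A_{d-1,i}(t)$, by induction on $d$. Hence what you need is $A_{d,i+1}=A_{d,i}-(1-t)A_{d-1,i}$. This is the difference of two instances of the first-letter recurrence $A_{d,i}=t\sum_{j<i}A_{d-1,j}+\sum_{j\geqslant i}A_{d-1,j}$. Equivalently, swap the letters $i+1$ and $i+2$; this changes the descent number only when $\sigma(2)=i+2$. The base case $i=0$ is the classical identity $\sum_{q}(q+1)^dt^q=A_d(t)/(1-t)^{d+1}$ together with $A_{d,0}=A_d$, which you already checked.
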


The polynomial $x^i(1+x)^{d-i}$ counts lattice points in positive integral
dilations of the half-open cube $[0,1]^{d-i}\times[0,1)^i$.
Consequently, if $f(x)=\sum_{i=0}^d m_i x^i(1+x)^{d-i}$, then
\begin{equation}\label{eq:hstar-first-letter}
 (\W f)(t)=\sum_{i=0}^d m_i A_{d,i}(t).
\end{equation}
The refined Eulerian polynomials have nonnegative coefficients, and their
nonnegative linear combinations are real-rooted
\cite[Theorem 3.1 and Corollary 4.5]{beck2019h}. We therefore obtain the
following result of Br\"and\'en, in the form used here.

\begin{theorem}[{\cite[Corollary 4.5]{beck2019h}}]\label{thm:magic-rr}
If $f$ is magic positive, then $\W f$ has nonnegative coefficients and only
real, nonpositive roots.
\end{theorem}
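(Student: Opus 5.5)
By linearity of $\W$ and \eqref{eq:hstar-first-letter}, I would reduce the statement to a property of the refined Eulerian polynomials alone. Write $f=\sum_{i=0}^d m_i x^i(1+x)^{d-i}$ with $m_i\geqslant 0$. Then $(\W f)(t)=\sum_i m_i A_{d,i}(t)$. Each $A_{d,i}$ is a descent generating function, so it has nonnegative coefficients, and nonnegativity of $\W f$ follows at once. A real polynomial with nonnegative coefficients has no positive roots, and $0$ is nonpositive. So once real-rootedness is established, all roots are automatically nonpositive. The remaining task is therefore to show that every nonnegative combination of $A_{d,0},\dots,A_{d,d}$ is real-rooted.

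For this I would use the theory of interlacing sequences (Fisk; Chudnovsky--Seymour; Br\"and\'en). Call a sequence $(g_0,\dots,g_d)$ of real-rooted polynomials with nonnegative coefficients \emph{interlacing} if $g_i$ interlaces $g_j$ whenever $i<j$. The standard fact is that for such a sequence every nonnegative combination $\sum m_i g_i$ is real-rooted, since pairwise compatibility gives a common interlacer. The real work is then to prove that $(A_{d,0},\dots,A_{d,d})$ is interlacing, which I would do by induction on $d$.

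The base case is $A_{0,0}=1$. For the inductive step, I would derive a recursion by deleting the first letter $\sigma(1)=i+1$ and standardizing the remaining word to a permutation in $S_d$ with first letter $j+1$. A descent occurs at position $1$ exactly when $j<i$. This gives
\[
  A_{d,i}(t)=t\sum_{j<i}A_{d-1,j}(t)+\sum_{j\geqslant i}A_{d-1,j}(t).
\]
I would then invoke the known lemma (Br\"and\'en, also used by Beck--Jochemko--McCullough) stating that this transformation sends interlacing sequences to interlacing sequences. The proof goes in two steps. First, for an interlacing $(g_j)$, the partial sums $\sum_{j<i}g_j$ and $\sum_{j\geqslant i}g_j$ are again real-rooted and interlace appropriately. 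Second, multiplying a sum by $t$ and adding preserves interlacing when the coefficients are nonnegative and the roots are nonpositive.

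I expect the main obstacle to be checking the interlacing-preservation lemma carefully, in particular the boundary cases. When $i=0$ there is no $t$-part, so $A_{d,0}$ equals the full Eulerian polynomial $A_d$. When $i=d$ the second sum reduces to the single term $A_{d-1,d}$, which does not exist, so only the $t$-part remains. The degree drops and possible common roots at $0$ must also be tracked so that the interlacing is meant in the weak sense. Finally, if $f$ has degree $d$ but some $m_i$ vanish, nothing changes, since the argument concerns arbitrary nonnegative combinations.
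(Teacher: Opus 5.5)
Your proposal is correct and follows the paper's route. You expand $f$ in the magic basis, use \eqref{eq:hstar-first-letter} to write $\W f=\sum_i m_iA_{d,i}$, read off nonnegativity of the coefficients, and reduce to real-rootedness of nonnegative combinations of the refined Eulerian polynomials; nonpositivity of the roots then comes for free from the nonnegative coefficients. The paper simply cites Beck--Jochemko--McCullough (Theorem 3.1 and Corollary 4.5) for real-rootedness of these combinations, and your induction via the first-letter recursion and the interlacing-preservation lemma is essentially the argument that citation rests on.
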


Ferroni and Higashitani use this to prove Ehrhart positivity, magic positivity, and $\hstar$-real-rootedness for several classes of lattice polytopes
\cite[Theorem 4.19 and Remark 4.21]{ferroni2024examples} including zonotopes.

\section{Magic positivity of zonotopal classes}
\label{sec:proof}

\subsection{Zonotopes and zonotopal classes}
\label{sec:zonotopal-classes}
A \emph{zonotope} is a Minkowski sum of finitely many line segments,
\[
  Z(v_1, \ldots, v_\ell) \;=\; [0, v_1] + \cdots + [0, v_\ell] \;\subseteq\; \R^{n},
\]
equivalently, the projection of a cube; see
\cite[Lecture 7]{ziegler1995lectures}. The face lattice of $Z$ is determined by the oriented matroid of the vector configuration $v_1,\ldots,v_\ell$ \cite{bjorner1999oriented}.  A zonotope $Z$ can be tiled by half-open
parallelepipeds, one for each independent subset of $\{v_1, \ldots, v_\ell\}$
\cite[Theorem 54]{shephard1974combinatorial}; see, for instance,
\cite[Lemma 9.1]{beck2015computing} for a proof.  The tiling gives
Stanley's formula \cite[Theorem 2.2]{stanley1991zonotope}: the Ehrhart polynomial
of a lattice zonotope is $\sum_{X} d(X)\, q^{\abs{X}}$, summed over the linearly
independent subsets $X$ of the generators, where $d(X)$ is the greatest common
divisor of the maximal minors of the matrix whose rows are the elements of $X$.
From it, Beck, Jochemko and McCullough deduce
that the $\hstar$-polynomial of a lattice zonotope is real-rooted
\cite{beck2019h}.

Not every zonotope is a generalized permutohedron.  The normal fan of a zonotope $Z$
coarsens $\Sigma_{n-1}$ exactly when every generator $v_i$ is parallel to some
$e_i - e_j$, so the lattice zonotopal generalized permutohedra are precisely the
lattice translates of nonnegative integer combinations of standard line segments
\begin{equation}\label{eq:zonotope}
  P \;=\; \sum_{1 \leqslant i < j \leqslant n} y_{ij}\, \Delta_{ij},
  \qquad y_{ij} \in \Z_{\geqslant 0},
  \qquad \Delta_{ij} = \operatorname{conv}\{e_i, e_j\},
\end{equation}
that is, the polytopes of \eqref{eq:signed-minkowski} supported in
$|S| \leqslant 2$. Every such polytope is a lattice translate of a graphical zonotope.

\begin{example}[the permutohedron and the braid matroid]\label{ex:permutohedron}
  The $n$th permutohedron is the zonotope with all $y_{ij} = 1$
  \[
    \Pi_{n-1} \;=\; \sum_{1 \leqslant i < j \leqslant n} \Delta_{ij} \;\subseteq\; \R^{n}
  \]
  Its generators are the directions $e_i - e_j$ of the segments $\Delta_{ij}$, and a set of them is linearly
  independent exactly when the corresponding edges form a forest in $K_n$, which forms the braid matroid $\M(K_n)$.  Shephard's tiling
  therefore has one half-open parallelepiped per forest.  These are unimodular,
  and a forest with $k$ edges contributes $q^{k}$ lattice points to the $q$th dilation. Therefore, the Euler characteristic over $\B_n$ is the
  Ehrhart polynomial, which also coincides with the independence polynomial of the $n$th braid matroid
  \[
    \abs{q\,\Pi_{n-1} \cap \Z^{n}} \;=\; \chi_{\B_n,\, \Pi_{n-1}}(q)
    \;=\; \sum_{I \in \M(K_n)} q^{\abs{I}},
  \]
  For $n = 3$, see \Cref{fig:permutohedron}. The independent sets of $\M(K_3) = \U_{2,3}$ are $\{1, 2, 3, 12, 23, 13\}$, which correspond to the
  cells of the tiling, namely a point, three edges and three rhombi.  Therefore,
  $\chi_{\B_3, \Pi_2}(q) = 1 + 3q + 3q^2$, and $\chi_{\B_3, \Pi_2}(1) =  1 + 3 + 3 = 7$ which corresponds to the six vertices of the hexagon together with its center.
\end{example}
\begin{figure}[ht]
\centering
\begin{tikzpicture}[scale=0.90,
    seg/.style={line width=1.4pt, line cap=round},
    dot/.style={fill, circle, inner sep=1pt},
    bd/.style={line width=1.1pt, line join=round, line cap=round},
    inn/.style={line width=0.9pt, line cap=round},
    ax/.style={draw=black!30, line width=0.4pt, line cap=round},
    axdot/.style={fill=black!35, circle, inner sep=0.7pt},
    axlab/.style={black!45, font=\tiny}]
  \def\cy{1.299}
  \def\eA{(0,{\cy+0.92})}
  \def\eB{({-0.797},{\cy-0.46})}
  \def\eC{({ 0.797},{\cy-0.46})}
  \def\frame{%
    \draw[ax] (0,\cy) -- \eA;  \draw[ax] (0,\cy) -- \eB;  \draw[ax] (0,\cy) -- \eC;
    \node[axdot] at \eA {}; \node[axdot] at \eB {}; \node[axdot] at \eC {};
    \node[axlab, above=0pt] at \eA {$e_1$};
    \node[axlab, left=0pt]  at \eB {$e_2$};
    \node[axlab, right=0pt] at \eC {$e_3$};}
  \begin{scope}[shift={(0,0)}]
    \frame
    \draw[seg,gA] \eA -- \eB;
    \node[dot,gA] at \eA {}; \node[dot,gA] at \eB {};
    \node at (0,0.10) {$\Delta_{12}$};
  \end{scope}
  \node at (1.55,\cy) {$+$};
  \begin{scope}[shift={(3.1,0)}]
    \frame
    \draw[seg,gB] \eA -- \eC;
    \node[dot,gB] at \eA {}; \node[dot,gB] at \eC {};
    \node at (0,0.10) {$\Delta_{13}$};
  \end{scope}
  \node at (4.65,\cy) {$+$};
  \begin{scope}[shift={(6.2,0)}]
    \frame
    \draw[seg,gC] \eB -- \eC;
    \node[dot,gC] at \eB {}; \node[dot,gC] at \eC {};
    \node at (0,0.10) {$\Delta_{23}$};
  \end{scope}
  \node at (7.75,\cy) {$=$};
  \begin{scope}[shift={(10.3,0)}]
    \coordinate (V0) at (0,0);        \coordinate (V1) at (1.5,0);
    \coordinate (V2) at (2.25,1.299); \coordinate (V3) at (1.5,2.598);
    \coordinate (V4) at (0,2.598);    \coordinate (V5) at (-0.75,1.299);
    \coordinate (C)  at (0.75,1.299);
    \draw[ax] (C) -- ++(0,2.05);          \node[axlab] at (0.90,3.42) {$e_1$};
    \draw[ax] (C) -- ++(-1.775,-1.025);   \node[axlab] at (-1.14,0.22) {$e_2$};
    \draw[ax] (C) -- ++( 1.775,-1.025);   \node[axlab] at (2.64,0.22) {$e_3$};
    \fill[tAC!12] (V0) -- (V1) -- (V2) -- (C)  -- cycle;
    \fill[tAB!12] (V0) -- (C)  -- (V4) -- (V5) -- cycle;
    \fill[tBC!12] (C)  -- (V2) -- (V3) -- (V4) -- cycle;
    \draw[inn,gA] (V0) -- (C);   \draw[inn,gC] (C) -- (V2);  \draw[inn,gB] (C) -- (V4);
    \draw[bd,gC] (V0) -- (V1);   \draw[bd,gA] (V1) -- (V2);  \draw[bd,gB] (V2) -- (V3);
    \draw[bd,gC] (V3) -- (V4);   \draw[bd,gA] (V4) -- (V5);  \draw[bd,gB] (V5) -- (V0);
    \node[font=\tiny, black!55, below=1pt] at (V0) {$(0,2,1)$};
    \node[font=\tiny, black!55, below=1pt] at (V1) {$(0,1,2)$};
    \node[font=\tiny, black!55, above right=-1pt] at (V2) {$(1,0,2)$};
    \node[font=\tiny, black!55, above=1pt] at (V3) {$(2,0,1)$};
    \node[font=\tiny, black!55, above=1pt] at (V4) {$(2,1,0)$};
    \node[font=\tiny, black!55, above left=-1pt] at (V5) {$(1,2,0)$};
    \node at (1.125,0.60)  {\scriptsize $\{12,23\}$};
    \node at (0,1.299)     {\scriptsize $\{12,13\}$};
    \node at (1.125,1.999) {\scriptsize $\{13,23\}$};
    \node at (0.75,-0.75) {$\Pi_2$};
  \end{scope}
\end{tikzpicture}
\caption{The permutohedron $\Pi_2$.}
\label{fig:permutohedron}
\end{figure}

We introduce a general notion of matroids equipped with lattice generalized permutohedra, which yield a $K$-class in the $K$-rings of matroids.
This includes graphical zonotopes over arbitrary loopless matroids.

\begin{definition}[zonotopal class]\label{def:zonotopal-class}
  A line bundle $\L_{P}$ in $K(\M)$ is \emph{zonotopal}, if the weights $\omega_{\M, P}$ are nonnegative and supported on flats of rank $2$, or the \emph{lines} of $\M$. The corresponding $K$-class is a \emph{zonotopal class}.  
\end{definition}

Zonotopal classes include $\cL_{P}$ for every graphical zonotope
\[
  P=\sum_{i<j}y_{ij}\Delta_{ij}\subseteq\R^E,
  \qquad y_{ij}\in\Z_{\geqslant0}, \qquad \Delta_{ij}=\operatorname{conv}\{e_i,e_j\}, 1 \leqslant i < j \leqslant n. 
\] 

For a zonotopal pair, we write
\[
  \supp_2(\omega) \;=\; \supp(\omega) \cap \Lines(\M),
  \qquad
  \DT(\M,P) \;=\; \DT_{\supp_2(\omega)}(\M)
\]
for its set of \emph{supported lines} and for the Dilworth truncation of $\M$
along them.  Both depend on the pair, since $\omega = \omega_{\M,P}$ does.

A key combinatorial feature of a graphical zonotope $P$ is that $\omega_{\M, P}$ is supported on lines of $\M$, for any loopless matroid $\M$.
\begin{lemma}\label{lem:zonotope-is-zonotopal}
  For any loopless matroid $\M$, if a lattice generalized permutohedron $P$ in $\R^{E}$ is a zonotope, then $\cL_P$ is zonotopal in $K(\M)$.
\end{lemma}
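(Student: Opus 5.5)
The plan is to reduce to the explicit form of lattice zonotopal generalized permutohedra and then apply \Cref{lem:support-general}. First I will justify that if $P$ is a lattice generalized permutohedron that is a zonotope, then its normal fan coarsens $\Sigma_{n-1}$, so each generator is parallel to some $e_i-e_j$. Up to lattice translation, $P$ therefore has the form \eqref{eq:zonotope}:
\[
P=\sum_{i<j}y_{ij}\Delta_{ij}, \qquad y_{ij}\in\Z_{\geqslant0}.
\]
Translation changes neither the class $\cL_P$ nor the weights $\omega_{\M,P}$. Indeed, translating by a lattice vector $v$ adds $y'_{\{i\}}=v_i$ to the singleton coefficients only, and singletons close to rank $1$ flats, which are omitted. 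Equivalently, $\sigma_P$ changes by the modular function $S\mapsto -v(S)$, whose M\"obius inversion vanishes on flats of rank at least $2$.

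By uniqueness of the signed Minkowski decomposition \eqref{eq:signed-minkowski}, the coefficients $y_S$ of $P$ are $y_{\{i,j\}}=y_{ij}\geqslant0$ for $|S|=2$, and $y_S=0$ for $|S|\geqslant3$. Then \Cref{lem:support-general} gives, for every flat $F$ of rank at least $2$,
\[
\omega_{\M,P}(F)=\sum_{\cl_\M(S)=F}y_S=\sum_{\substack{i<j\\ \cl_\M(\{i,j\})=F}}y_{ij}.
\]
Every summand is nonnegative, so $\omega_{\M,P}\geqslant0$. Since $\M$ is loopless, $\rk_\M\{i,j\}\leqslant 2$. It equals $1$ when $i,j$ are parallel, in which case the closure is a rank $1$ flat and contributes nothing. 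Otherwise it equals $2$, so $\cl_\M\{i,j\}$ is a line. Hence any flat $F$ with $\omega(F)\neq0$ has rank exactly $2$, which is precisely the zonotopal condition of \Cref{def:zonotopal-class}.

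The only step needing care is the first one: showing that the generators are parallel to the $e_i-e_j$, and that a lattice zonotope with such generators decomposes with nonnegative integer multiples of the $\Delta_{ij}$. Each generator of a zonotope is (a multiple of) an edge direction, and edge directions of a generalized permutohedron lie along $e_i-e_j$. Lattice-ness forces each segment $[0,v]$ with $v\parallel e_i-e_j$ to be an integer multiple of $\Delta_{ij}$ up to translation. Strictly, this holds after grouping parallel generators and possibly translating by a half-integral vector, and the lattice condition forces the total translation to be integral. I expect to cite this as the standard fact already stated before \eqref{eq:zonotope}.
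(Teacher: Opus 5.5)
Your proposal is correct and follows the same route as the paper. Both arguments write $P$, up to lattice translation, as $\sum_{i<j} y_{ij}\Delta_{ij}$ with $y_{ij}\geqslant 0$, then apply \Cref{lem:support-general} to get $\omega_{\M,P}(F)=\sum_{\cl_{\M}(\{i,j\})=F}y_{ij}$, and conclude from $\rk_{\M}\{i,j\}\leqslant 2$ that the weights are nonnegative and supported on lines.

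Your remarks on translation invariance and on parallel pairs make explicit what the paper leaves implicit. The ``half-integral'' caveat is unnecessary: the edges of the zonotope in direction $e_i-e_j$ are lattice segments equal to $\lambda_{ij}(e_i-e_j)$, which forces $\lambda_{ij}\in\Z$, and that in turn forces the translation to be integral.
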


\begin{proof}
  Let $P = \sum_{1 \leqslant i<j \leqslant n} y_{ij}\Delta_{ij}$ with $y_{ij}\in\Z_{\geqslant0}$. For every flat $F$ in $\M$ of rank at least $2$,
  \begin{equation}\label{eq:wF}
    \omega_{\M, P}(F) \;=\; \begin{cases}
    \sum\limits_{\cl_{\M}(i, j) = F} y_{ij} & \rk_{\M}(F) = 2, \\
    0 & \text{otherwise}.
    \end{cases}
  \end{equation}
  In particular, $(\M,P)$ is a zonotopal pair.
  By \Cref{lem:support-general},
  \[
  \omega_{\M, P}(F) = \sum_{\cl_{\M}(i, j) = F} y_{ij}.
  \]
  Since the closure of any $2$-subset has rank at most $2$, $\omega_{\M, P}(F) = 0$ for any flat $F$ of rank at least $3$.
\end{proof}

\begin{example}
  \label{ex:zonotopal-pair}
  The converse holds over the Boolean matroid, but fails in general. For example, let $\M = \U_{2,3} \oplus \B_1$
  on $\{1,2,3,4\}$ such that $F = \{1,2,3\}$ is a line in $\M$. Let
  $P = \Delta_{F}$ be the corresponding standard $2$-simplex on $[3]$. Then
  $\omega_{\M, P}(F) = 1$ and $\omega_{\M, P}(G) = 0$ for every other flat $G$ of rank at least two, so
  $\omega_{\M, P}$ is supported on the single line $F$ and $\cL_P$ is a zonotopal in $K(\M)$, even though $P$ is not a zonotope.
\end{example}

\begin{remark}
  Let us clarify the relationship between these zonotopal $K$-classes
and other related zonotopal objects in the recent literature. Holtz--Ron
\cite{holtz2011zonotopal} and Ardila--Postnikov \cite{ardila2010combinatorics}
introduced, for a vector configuration, a family of zonotopal algebras $Z_k$,
named after its zonotope, whose lattice point count, volume, and interior
lattice point count compute $\dim Z_k$ for $k=0$ (external), $k=-1$ (central),
and $k=-2$ (internal) respectively. Crowley--Larson \cite[Corollary~1.2]{crowley2026cohomological}
showed that the graded dual of $Z_k$ is the space of sections of
$\calO(1,\ldots,1)(k\alpha)$ on the augmented wonderful variety of the
configuration, where $\alpha$ is pulled back from the hyperplane class on the
projective completion. In the external case $k=0$, the class $\calO(1,\ldots,1)$, is the
augmented analogue of the classes, which can be considered as a special case of the classes that we consider here. For $k\neq 0$, $\calO(1, \ldots, 1)(k\alpha)$ produces classes
outside our family, as do the hierarchical zonotopal algebras of
Holtz--Ron--Xu \cite{holtz2012hierarchical} and Lenz \cite{lenz2012hierarchical}.
\end{remark}

\subsection{The Euler characteristic of a zonotopal class}
\label{sec:proof-formula}
In this subsection, we give a combinatorial formula for the Euler characteristics of tensor powers of zonotopal classes as a weighted independence polynomial of the Dilworth truncation of the matroid.

\begin{theorem}[\Cref{thm:main-EC-independence}]
  \label{thm:ec-zonotopal}
  Let $(\M, P)$ be a zonotopal pair.  Then
  \[
  \chi_{\M,P}(q) \;=\; \sum_{I \in \DT(\M)} \omega_{\M,P}(I)\, q^{\abs{I}} .
  \]
\end{theorem}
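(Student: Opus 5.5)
The proof starts from the Larson--Li--Payne--Proudfoot formula \Cref{thm:dhr}:
\[
\chi_{\M,P}(q)=\sum_{k\in\DHR(\M)}\prod_F\binom{\omega_{\M,P}(F)\,q+k_F-1}{k_F}.
\]
Since $\binom{x-1}{0}=1$, a flat with $k_F=0$ contributes the factor $1$. A flat with $k_F\geqslant 1$ and $\omega_{\M,P}(F)=0$ contributes $\binom{k_F-1}{k_F}=0$. So only tuples $k$ with $\supp(k)\subseteq\supp(\omega_{\M,P})$ survive. For a zonotopal pair this support consists of lines, so every surviving $k$ is supported on lines.

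The capacity bound from \eqref{eq:dHR} with $J=\{F\}$ gives $k_F\leqslant\rk_{\M}(F)-1=1$ on each line. Hence each surviving $k$ is the indicator vector of a set $I$ of supported lines. For such a $k$, the dragon Hall--Rado condition reads $\rk_{\M}(\bigcup_{F\in J}F)\geqslant |J|+1$ for every nonempty $J\subseteq I$, which is exactly the condition \eqref{eq:dilworth} for $I\in\DT(\M)$. Each factor with $k_F=1$ equals $\binom{\omega(F)q}{1}=\omega(F)q$. The term indexed by $I$ is therefore $\omega_{\M,P}(I)\,q^{|I|}$.

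Conversely, every $I\in\DT(\M)$ gives a dragon Hall--Rado tuple. If $I$ contains an unsupported line, then $\omega_{\M,P}(I)=0$, so summing over all of $\DT(\M)$ instead of $\DT(\M,P)$ changes nothing. Summing the terms yields the stated formula.

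The only delicate point is the bookkeeping of the binomial convention: we need $\binom{x-1}{0}=1$ including at $x=0$, and $\binom{-1}{k}$ must vanish for $k\geqslant1$ in the form used in \Cref{thm:dhr}. If that corollary is instead stated with sums only over $k$ supported on $\supp(\omega)$, the reduction is immediate.
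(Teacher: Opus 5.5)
Your proposal is correct and follows the paper's proof step for step: start from the Larson--Li--Payne--Proudfoot dragon Hall--Rado formula, discard tuples with $k_F>0$ on unsupported flats, use the capacity bound $k_F\leqslant 1$ on lines to reduce to indicator vectors, identify the dragon Hall--Rado condition with independence in the Dilworth truncation, and pad with zero-weight terms. One slip in your last paragraph should be corrected, though it does not affect the argument: the vanishing you need, and already used in the main argument, is $\binom{k-1}{k}=0$ for $k\geqslant 1$, whereas $\binom{-1}{k}=(-1)^k$ does not vanish.
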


\begin{proof}
By \Cref{thm:dhr},
\[
 \chi_{\M,P}(q)=\sum_{k\in\DHR(\M)}
 \prod_F\binom{q\omega_{\M,P}(F)+k_F-1}{k_F}.
\]
If $k_F>0$ and $\omega_{\M,P}(F)=0$, the corresponding factor vanishes.
Every nonzero summand is therefore supported on $\supp_2(\omega)$.
For a line $F$, the dragon Hall--Rado condition gives
$k_F\leqslant\rk_{\M}F-1=1$. Hence every contributing tuple is the
indicator vector of a subset $I\subseteq\supp_2(\omega)$.
For such a tuple, the dragon Hall--Rado condition implies that, for every nonempty subset $J\subseteq I$,
\[
 \rk_{\M}\Big(\bigcup_{F\in J}F\Big)\geqslant |J|+1,
\]
which is precisely the independence condition in $\DT(\M,P)$.
The corresponding summand is
\[
\prod_{F\in I}q\omega_{\M,P}(F)=\omega_{\M,P}(I)q^{|I|}.
\]
The result follows by summing the formula over $\DT(\M,P)$ and noting that extending to $\DT(\M)$ only adds zero summands. 
\end{proof}

\subsection{Magic positivity of the Euler characteristic}
\label{sec:proof-magic}
\begin{theorem}[\Cref{thm:main}]\label{thm:magic-zonotopal}
  If $(\M, P)$ is zonotopal, then $\chi_{\M,P}(q)$ is magic positive, and
  its magic vector is the $\omega_{\M, P}$-weighted $h$-vector of the independence complex of $\DT(\M,P)$.  In
  particular, $\hstar_{\M,P}(t)$ is real-rooted.
\end{theorem}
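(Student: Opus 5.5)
We combine \Cref{thm:ec-zonotopal} with the weighted shelling formula of \Cref{lem:weighted-h}. By \Cref{thm:ec-zonotopal}, and discarding the zero summands, we have
\[
  \chi_{\M,P}(q)=\sum_{I\in\DT(\M,P)}\omega_{\M,P}(I)\,q^{|I|}=f\big(\Delta,\omega;q\big),
\]
where $\Delta$ is the independence complex of the matroid $\DT(\M,P)=\DT_{\supp_2(\omega)}(\M)$. We regard $\Delta$ as a simplicial complex on the vertex set $\supp_2(\omega)$ with vertex weights $w=\omega_{\M,P}|_{\supp_2(\omega)}$. Since $\cL_P$ is zonotopal, these weights are nonnegative integers. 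On the support they are nonzero, so $w\colon\Delta(1)\to\Z_{\geqslant1}$, exactly as \Cref{lem:weighted-h} requires. A supported line need not be a loop of $\DT(\M,P)$: for a single line $F$ the condition in \eqref{eq:dilworth} reads $\rk_{\M}F=2\geqslant 2$. Hence every supported line is a vertex of $\Delta$.

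Next we identify the degree. Since $\DT(\M,P)$ is a matroid by \Cref{thm:edmonds} and \Cref{lem:dilworth-matroid}, the complex $\Delta$ is pure of dimension $d-1$, where $d=\rk\DT(\M,P)$. The top coefficient of $f(\Delta,\omega;q)$ is a sum of products of positive weights over the $d$-element bases, so it is positive. Therefore $\deg\chi_{\M,P}=d$, and the magic basis used to expand $\chi_{\M,P}$ is $\{q^i(1+q)^{d-i}\}_{i=0}^{d}$. This matching of degrees is needed so that the magic vector is taken with respect to the correct $d$; it is also the $d$ appearing in \eqref{eq:hstar-def}.

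By \Cref{thm:vd-shellability}, $\Delta$ is shellable. By \eqref{eq:weighted-f}, equivalently \Cref{cor:magic-f-to-h} together with \Cref{lem:magic-transform}, we get
\[
  \chi_{\M,P}(q)=\sum_{i=0}^{d}h_i(\Delta,\omega)\,q^i(1+q)^{d-i}.
\]
So the magic vector is the $\omega_{\M,P}$-weighted $h$-vector of $\Delta$. \Cref{lem:weighted-h} shows that this vector is nonnegative, and it even gives the explicit formula $\sum_B\omega(\rho(B))\,y^{|\rho(B)|}\prod_{v\in B\setminus\rho(B)}\big(1+(\omega(v)-1)y\big)$. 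Hence $\chi_{\M,P}$ is magic positive.

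Finally, $\W\chi_{\M,P}=\hstar_{\M,P}$ by \eqref{eq:W} and \eqref{eq:hstar-def}, and both are taken with the same $d$. \Cref{thm:magic-rr} then gives that $\hstar_{\M,P}(t)$ is real-rooted with nonnegative coefficients.

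The argument is mostly bookkeeping. The one point needing care is the restriction to $\supp_2(\omega)$: lines with $\omega=0$ must be excluded so that all weights are at least $1$, and the degree must be computed as the rank of $\DT(\M,P)$ rather than of $\DT(\M)$.
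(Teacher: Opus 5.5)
Your proof is correct and follows the paper's argument: you write $\chi_{\M,P}$ via \Cref{thm:ec-zonotopal} as the $\omega$-weighted $f$-polynomial of the independence complex of $\DT(\M,P)$, pass to the weighted $h$-vector through the magic transform, get nonnegativity from shellability and \Cref{lem:weighted-h}, and conclude with \Cref{thm:magic-rr}. Your extra checks, that every supported line is a vertex of $\Delta$ and that $\deg\chi_{\M,P}=\rk\DT(\M,P)$ because the top coefficient is positive, supply details the paper only asserts.
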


\begin{proof}
  Let $\Delta$ denote the independence complex of $\DT(\M,P)$, and set
  $d = \rk\big(\DT(\M,P)\big)$.  By
  \Cref{thm:ec-zonotopal}, $d = \deg \chi_{\M,P}(q)$ and 
  \[
    \chi_{\M,P}(q) \;=\; \sum_{I \in \Delta} \omega(I)\, q^{\abs{I}},
  \]
  which is the weighted $f$-polynomial $f(\Delta, \omega_{\M,P}; q)$.
  Its magic transform \eqref{eq:magic} is
  \[
    m(\chi_{\M,P}; y) \;=\; (1-y)^d\, \chi_{\M,P}\Big(\frac{y}{1-y}\Big)
      \;=\; \sum_{I \in \Delta} \omega(I)\, y^{\abs{I}} (1-y)^{d - \abs{I}}.
  \]
  By \Cref{cor:magic-f-to-h}, the magic vector of $\chi_{\M,P}$ is the
  $\omega$-weighted $h$-vector of $\Delta$.
  Since $\Delta$ is the independence complex of a matroid, it is pure of dimension $d-1$ and shellable
  by \Cref{thm:vd-shellability}, and its vertex weights satisfy $\omega_{\M, P}(F) \geqslant 1$ by the
  definition of $\supp_2(\omega)$.  Therefore, \Cref{lem:weighted-h} implies that $m(\chi_{\M,P}; y)$
  has nonnegative integer coefficients, which by \Cref{lem:magic-transform} is magic
  positivity of $\chi_{\M,P}$.  Real-rootedness of $\hstar_{\M,P}$ then follows
  from \Cref{thm:magic-rr}.
\end{proof}

\begin{corollary}\label{cor:magic-extremes}
  In the situation of \Cref{thm:main}, if we denote by $\Delta$ the independence
  complex of $\DT(\M,P)$, which is pure of dimension $d - 1$ for
  $d = \rk\big(\DT(\M,P)\big)$, and fix a shelling of $\Delta$ with restriction map
  $\rho$, then
  \begin{enumerate}[leftmargin=20pt]
    \item \[m_0 = 1,\]
    \item
    \[
    m_1 \;=\; \sum_{F \in \supp_2(\omega)} \omega_F \;-\; d\qquad(d\geqslant1),
    \]
    \item
    \[
    m_d \;=\; \hstar_d \;=\; \sum_{B \in \Delta(d)} \omega(\rho(B)) \prod_{F \in B \setminus \rho(B)} (\omega_F - 1).
    \]
    If every $\omega_{\M, P}(F) = 1$, then $m_d$ is the number of bases with $\rho(B) = B$.
  \end{enumerate}
\end{corollary}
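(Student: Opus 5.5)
The plan is to read off all three items from the formulas already available: the weighted $f$-polynomial expression of \Cref{thm:ec-zonotopal}, the coefficient identities of \Cref{lem:magic-transform}, and the shelling formula of \Cref{lem:weighted-h}. By \Cref{thm:magic-zonotopal}, the magic vector of $\chi_{\M,P}$ is the $\omega$-weighted $h$-vector of $\Delta$. We have $\chi_{\M,P}(q)=\sum_{I\in\Delta}\omega(I)q^{|I|}$ with $d=\rk(\DT(\M,P))$, so $c_0=\omega(\varnothing)=1$ and $c_1=\sum_{F\in\supp_2(\omega)}\omega_F$. The singletons $\{F\}$ with $F\in\supp_2(\omega)$ are all independent in $\DT(\M,P)$, since $\rk_{\M}F=2\geqslant 1+1$. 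Applying $m_0=c_0$ and $m_1=c_1-dc_0$ from \Cref{lem:magic-transform} then gives items (1) and (2) directly.

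For item (3), I first record $m_d=\hstar_d$, which is already part of \Cref{lem:magic-transform}. It can also be recovered from \eqref{eq:hstar-first-letter}: every $A_{d,i}$ has degree at most $d$, and the coefficient of $t^d$ arises only from the permutation $\sigma=(d+1,d,\dots,1)$. This permutation lies in $A_{d,d}$, so $\hstar_d=m_d$.

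Next I extract the $y^d$ coefficient from the shelling formula of \Cref{lem:weighted-h}. The summand for a facet $B$ is
\[
\omega(\rho(B))\,y^{|\rho(B)|}\prod_{F\in B\setminus\rho(B)}\bigl(1+(\omega_F-1)y\bigr).
\]
Since $|B|=d$, this summand has degree $d$ in $y$, and its top coefficient is $\omega(\rho(B))\prod_{F\in B\setminus\rho(B)}(\omega_F-1)$. Summing over the facets $B\in\Delta(d)$ gives item (3). When every $\omega_F=1$, the factor $\prod(\omega_F-1)$ vanishes unless $B\setminus\rho(B)=\varnothing$, and in that case the summand is $1$. So $m_d$ counts the bases with $\rho(B)=B$.

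The only delicate point is the degenerate case. We need $\deg\chi_{\M,P}=d$ so that the magic basis used in \Cref{lem:magic-transform} matches $d=\rk\DT(\M,P)$. This holds because the top coefficient of $\chi_{\M,P}$ is $\sum_{B}\omega(B)>0$. Also, item (2) requires $d\geqslant1$, which is exactly the condition $\supp_2(\omega)\neq\varnothing$. Everything else is a direct substitution.
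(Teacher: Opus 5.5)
Your proposal is correct and follows essentially the same route as the paper: you read $m_0$ and $m_1$ off \Cref{lem:magic-transform} using $c_0=1$ and $c_1=\sum_F\omega_F$ from \Cref{thm:ec-zonotopal}, and you extract the $y^d$ coefficient of the shelling formula in \Cref{lem:weighted-h}. Your extra check of $m_d=\hstar_d$ via the refined Eulerian polynomials is a harmless alternative to the paper's use of $(-1)^d\chi_{\M,P}(-1)$. One small correction: each shelling summand has degree at most $d$, not exactly $d$ when some $\omega_F=1$, but this does not affect the top coefficient you extract.
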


\begin{proof}
  By \Cref{lem:magic-transform}, $m_0 = c_0 = \chi_{\M,P}(0) = 1$ and
  $m_1 = c_1 - d\,c_0$, and by \Cref{thm:ec-zonotopal}
  $c_1 = \sum_{F \in \supp_2(\omega)} \omega_F$
  is the total weight of the vertices of $\Delta$. By
  \Cref{lem:magic-transform} again, $m_d = (-1)^d \chi_{\M,P}(-1) = \hstar_d$, and the
  coefficient of $y^d$ on the right-hand side of \Cref{lem:weighted-h} is the
  displayed sum, since the summand indexed by every $B$ has degree at most $d$ and coefficient of $y^d$ equal to 
  \[
  \omega(\rho(B))\prod_{F \in B \setminus \rho(B)}(\omega_{\M,P}(F) - 1).
  \]
\end{proof}

\subsection{Examples} We demonstrate the behavior of the Snapper polynomials of zonotopal classes across different matroids. 
\label{sec:examples}
\begin{example}\label{ex:small}
  Let $\M_1 = \B_3$ and
  $\M_2 = \U_{2,3}$. In $\B_3$, the lines are the three pairs of elements in $[3]$, each of
  weight $y_{ij}$. In $\U_{2,3}$, every pair has closure $[3]$, so $\omega$ is concentrated on the single line $[3]$. $\DT(\U_{2, 3},P) = \B_1$ has independence complex equal to the empty set together with a
  single vertex, which has higher and higher weight as $P$ varies from $\Delta_{12}$ to $2\Delta_{12} + \Delta_{13} + \Delta_{23}$ in the following table.
  {\footnotesize
  \[
    \setlength{\arraycolsep}{3pt}
    \renewcommand{\arraystretch}{1.15}
    \begin{array}{llllll}
      \hline
      P & \M & \DT(\M,P) & \chi_{\M,P}(q) & \hstar_{\M,P}(t) & m(\M,P) \\
      \hline
      \Delta_{12}
        & \B_3 & \B_1 & 1 + q & 1 & (1,0) \\
        & \U_{2,3} & \B_1 & 1 + q & 1 & (1,0) \\[2pt] \hline
      \Delta_{12} + \Delta_{23}
        & \B_3 & \B_2 & 1 + 2q + q^2 & 1 + t & (1,0,0) \\
        & \U_{2,3} & \B_1  & 1 + 2q & 1 + t & (1,1) \\[2pt] \hline
      \Delta_{12} + \Delta_{13} + \Delta_{23}
        & \B_3 & \U_{2,3} & 1 + 3q + 3q^2 & 1 + 4t + t^2 & (1,1,1) \\
        & \U_{2,3} & \B_1  & 1 + 3q & 1 + 2t & (1,2) \\[2pt] \hline
      2\Delta_{12} + \Delta_{13} + \Delta_{23}
        & \B_3 & \U_{2,3} & 1 + 4q + 5q^2 & 1 + 7t + 2t^2 & (1,2,2) \\
        & \U_{2,3} & \B_1 & 1 + 4q & 1 + 3t & (1,3) \\
      \hline
    \end{array}
  \]}
\end{example}

\begin{example}\label{ex:path}
   Let $P = \Delta_{12} + \Delta_{23} + \Delta_{34}$ be the path zonotope on $[4]$,
   and let $\M$ be uniform of rank $4$, $3$, $2$.
   \[
    \renewcommand{\arraystretch}{1.15}
    \begin{array}{lllll}
      \hline
      \M & \DT(\M,P) & \chi_{\M,P}(q) & \hstar_{\M,P}(t) & m(\M,P) \\
      \hline
      \B_4 & \B_3 & (1+q)^3 & 1 + 4t + t^2 & (1,0,0,0) \\
      \U_{3,4} & \U_{2,3} & 1 + 3q + 3q^2 & 1 + 4t + t^2 & (1,1,1) \\
      \U_{2,4} & \B_1 & 1 + 3q & 1 + 2t & (1,2) \\
      \hline
    \end{array}
  \]
  Note that the first two rows have the same $\hstar$-polynomial, even though their
  Euler characteristics $\chi_{\M, P}(q)$ have different degrees.
\end{example}

\newcommand{\zbox}[1]{\parbox[c][34pt][c]{36pt}{\centering\scalebox{0.50}{#1}}}
\newcommand{\gfour}[1]{\begin{tikzpicture}[scale=0.62,line width=0.8pt,
    baseline=(current bounding box.center)]
  \path (0,0) coordinate (a) (1,0) coordinate (b) (1,1) coordinate (c) (0,1) coordinate (d);
  \draw[gA!75!black] #1;
  \foreach \p in {a,b,c,d} \fill (\p) circle (2.4pt);
  \end{tikzpicture}}

\begin{example}[Fedorov's five parallelohedra]\label{ex:parallelohedra}
  Every zonotopal generalized permutohedron \eqref{eq:zonotope} arises from a graph
  \cite[\S 2]{postnikov2009permutohedra}.  Let $G$ be a graph
on the vertex set $[n]$, possibly with parallel edges, and let $y_{ij}$ be the
number of edges joining $i$ and $j$.  The \emph{graphical zonotope} of $G$ is
\[
  Z_G \;=\; \sum_{ij \in E(G)} \Delta_{ij}
        \;=\; \sum_{1 \leqslant i < j \leqslant n} y_{ij}\, \Delta_{ij}
        \;\subseteq\; \R^{n},
\]
a lattice zonotope of dimension $n - c(G)$, with $c(G)$ the number of components of $G$.

  Let $n = 4$ and $G$ be a connected graph on $4$ vertices. There are five types of these zonotopes, called Fedorov's
  five parallelohedra \cite{fedorov1885nachala}. \Cref{tab:parallelohedra} lists them with their Euler
  characteristics over the Boolean matroid $\B_4$, where $\chi_{\B_4,Z_G}$ is the
  Ehrhart polynomial of $Z_G$ and, by \Cref{thm:main}, counts the forests of $G$
  by number of edges.

\begin{table}[H]
  \centering
  \small
  \setlength{\tabcolsep}{5pt}
  \renewcommand{\arraystretch}{1.25}
  \begin{tabular}{llcccc}
    \hline
    & $\M$ & $\DT(\M,\Zg_G)$ & $\chi_{\M,\Zg_G}(q)$ & $\hstar(t)$ & $m(\chi_{\M, Z_G})$ \\
    \hline
    \multicolumn{6}{l}{\gfour{(a)--(b) (b)--(c) (c)--(d)} \hspace{2mm}
      \zbox{\zonoPath} \hspace{2mm} cube} \\
    & $\B_4$     & $\B_3$     & $1+3q+3q^2+q^3$ & $1+4t+t^2$ & $(1,0,0,0)$ \\
    & $\U_{3,4}$ & $\U_{2,3}$ & $1+3q+3q^2$     & $1+4t+t^2$ & $(1,1,1)$ \\
    \hline
    \multicolumn{6}{l}{\gfour{(a)--(b) (b)--(c) (a)--(c) (c)--(d)} \hspace{2mm}
      \zbox{\zonoPaw} \hspace{2mm} hexagonal prism} \\
    & $\B_4$     & $\M(G)$    & $1+4q+6q^2+3q^3$ & $1+10t+7t^2$ & $(1,1,1,0)$ \\
    & $\U_{3,4}$ & $\U_{2,4}$ & $1+4q+6q^2$      & $1+8t+3t^2$  & $(1,2,3)$ \\
    \hline
    \multicolumn{6}{l}{\gfour{(a)--(b) (b)--(c) (c)--(d) (d)--(a)} \hspace{2mm}
      \zbox{\zonoCycle} \hspace{2mm} rhombic dodecahedron} \\
    & $\B_4$     & $\M(G)$    & $1+4q+6q^2+4q^3$ & $1+11t+11t^2+t^3$ & $(1,1,1,1)$ \\
    & $\U_{3,4}$ & $\U_{2,4}$ & $1+4q+6q^2$      & $1+8t+3t^2$       & $(1,2,3)$ \\
    \hline
    \multicolumn{6}{l}{\gfour{(a)--(b) (a)--(c) (a)--(d) (b)--(c) (b)--(d)} \hspace{2mm}
      \zbox{\zonoDiamond} \hspace{2mm} elongated dodecahedron} \\
    & $\B_4$     & $\M(G)$    & $1+5q+10q^2+8q^3$ & $1+20t+25t^2+2t^3$ & $(1,2,3,2)$ \\
    & $\U_{3,4}$ & $\U_{2,5}$ & $1+5q+10q^2$      & $1+13t+6t^2$       & $(1,3,6)$ \\
    \hline
    \multicolumn{6}{l}{\gfour{(a)--(b) (b)--(c) (c)--(d) (d)--(a) (a)--(c) (b)--(d)} \hspace{2mm}
      \zbox{\zonoComplete} \hspace{2mm} truncated octahedron $\Pi_3$} \\
    & $\B_4$     & $\M(K_4)$  & $1+6q+15q^2+16q^3$ & $1+34t+55t^2+6t^3$ & $(1,3,6,6)$ \\
    & $\U_{3,4}$ & $\U_{2,6}$ & $1+6q+15q^2$       & $1+19t+10t^2$      & $(1,4,10)$ \\
    \hline
  \end{tabular}
  \caption{Fedorov's five parallelohedra as $K$-classes over $\B_4$ and over $\U_{3,4}$.}
  \label{tab:parallelohedra}
\end{table}
Over $\U_{3,4}$ every pair of the ground set $E$ forms a line, and no three of them are independent, so $\DT(\U_{3,4}) = \U_{2,6}$. 
For any of the five simple graphs in the table, by \Cref{thm:main}
\[
  \chi_{\U_{3,4},\, \Zg_G}(q) \;=\; 1 + \abs{E(G)}\, q + \binom{\abs{E(G)}}{2} q^2,
\]
where the forests of $G$ are replaced by the independent sets of $\DT(\U_{3, 4}) = \U_{2,6}$
supported on $E(G)$; therefore, the spanning trees and the coefficient of $q^3$ disappear, compared to the case over $\B_4$.
Since the remaining independent sets depend on $\abs{E(G)}$, the hexagonal prism and
the rhombic dodecahedron have the same Euler characteristics.
\end{example}

\begin{example}\label{ex:specializations}
As a corollary of the main theorem, $\hstar$-polynomials of zonotopal pairs are real-rooted.
Some classical Eulerian polynomials are recovered as $h^{\ast}(\M, P)$ for a zonotopal pair.
  \begin{enumerate}[leftmargin=20pt]
    \item For $d\geqslant1$ and $k\geqslant1$, if $\M = \B_{2d}$ and $P = k\big(\Delta_{12} + \cdots + \Delta_{2d-1,2d}\big)$,
      then $\hstar(\M,P)$ is the Eulerian polynomial of the
      $k$-colored permutations $\Z_k \wr S_d$
      \cite{steingrmsson1992permutation}, that is $A_d(t)$ for $k = 1$ and the
      type $B$ polynomial $B_d(t)$ for $k = 2$ \cite[Theorem 3.4]{brenti1994q}.
      It is also the $\mathbf{s}$-Eulerian polynomial for
      $\mathbf s = (k, 2k, \dots, dk)$ of \cite[Corollary 3.6]{savage2015s}.

    \item For $n\geqslant3$, if $\M = \B_n$ and $P$ is the cycle zonotope $\Delta_{12} + \cdots + \Delta_{n-1,n} + \Delta_{1n}$,
      then the Dilworth truncation $\DT(\M,P) \cong \U_{n-1,n}$ and
      $\hstar(\M,P) = A_n(t)$, the classical Eulerian polynomial of $S_n$.
  \end{enumerate}
\end{example}

\begin{remark}
Not every real-rooted polynomial in $\Z[t]$ arises as $h^{\ast}(\M, P)$ for a zonotopal pair.
\Cref{thm:magic-zonotopal} implies that the magic vector of a zonotopal pair is a
weighted $h$-vector, so its entries are nonnegative integers. Consider the
third Narayana polynomial $1 + 3t + t^2$, the $h$-polynomial of the graph
associahedron of the path graph on $3$ vertices, which is well known to be real-rooted.  For a zonotopal class $\cL_P$ in $K(\M)$ which has $\chi_{\M, P}$ of degree $d$, nonnegativity of the magic vector and
$m_0=1$ implies that
\[
  \hstar_1=\chi_{\M,P}(1)-(d+1)\geqslant2^d-d-1.
\]
Thus this polynomial could occur only for $d\leqslant2$, and its nonzero
quadratic coefficient forces $d=2$. In that degree, by
\eqref{eq:hstar-first-letter}, it would be $\sum_i m_i A_{2,i}$. Since the
$A_{2,i}$ form a basis, the expansion is unique, and solving the equation gives
$m = \big(1, \tfrac12, 1\big)$, which is not integral. 
\end{remark}

\section{Geometry of zonotopal classes}
\label{sec:geometry}
In this section, we explain the proof above using geometry of the wonderful varieties, introduced by De Concini and Procesi \cite{deconcini1995wonderful}.
Suppose $\M$ is realized by an essential central hyperplane arrangement
$\calA$ in a complex vector space $L$. There is a rational map
\[
\Psi_\calA \colon \P(L) \dashrightarrow \prod_{\emptyset \ne F \in \calL(\M)} \P(L^F), \qquad L^F = L/\bigcap_{e \in F} H_e.
\]
The wonderful compactification $W_{\calA}$ of the projective arrangement complement,
is defined as the closure of the image of the rational map
\begin{equation}\label{eq:wonderful}
W_{\calA} \coloneqq \overline{\Im(\Psi_{\calA})} \hookrightarrow \prod_{\emptyset \ne F \in \calL(\M)} \P(L^F),
\end{equation}
and the $K$-ring of $W_{\calA}$ is isomorphic to the $K$-ring of the matroid via the restriction map, $K(W_{\calA}) \cong K(\M)$ \cite[Proposition 1.13]{larson2024k}.
For a nonempty flat $F$ of $\M$, let
\[
  \pi_F \colon W_{\calA} \longrightarrow \P(L^F) \cong \P^{\,\rk_{\M}(F) - 1},
\]
be the projection of \eqref{eq:wonderful} to the factor indexed by $F$. The line bundles of the form $\cL_F$ defined as $\pi_F^{*}\,\cO(1)$ are the generators of $K(W_{\calA})$ of
\Cref{sec:prelim-K}.  For any nonempty $S \subseteq E$, since
\[
\bigcap_{e \in S} H_e = \bigcap_{e \in \cl_{\M}(S)} H_e,
\] one can define $L^S \coloneqq L^{\cl_{\M}(S)}$, and we define
$\cL_S \coloneqq \cL_{\cl_{\M}(S)}$.

Given a lattice generalized permutohedron
$P = \sum_{S} y_S \Delta_S$, the permutohedral fan $\Sigma_{n-1}$ refines the normal
fan of each standard simplex $\Delta_S$, so $\Delta_S$ defines a nef line bundle
\[
  \cL_{\Delta_S} \;\coloneqq\; \cO_{X_{n-1}}(\Delta_S)
\]
on the permutohedral variety $X_{n-1}$ \cite[Section 6.2]{cox2011toric}.  The same
construction applies to $P$ itself, and support functions add under Minkowski sum,
so \eqref{eq:signed-minkowski} gives
\begin{equation}\label{eq:OP}
  \cO_{X_{n-1}}(P) \;=\; \bigotimes_{\emptyset \ne S \subseteq E}
  \cL_{\Delta_S}^{\otimes y_S} .
\end{equation}
The Bergman
fan of $\M$ is a subfan of $\Sigma_{n-1}$, and the inclusion of fans induces an open
immersion $X_{\M} \hookrightarrow X_{n-1}$.
Composing it with the closed embedding
$W_{\calA} \subseteq X_{\M}$ \cite{katz2011realization} yields
\[
  \iota \colon W_{\calA} \hookrightarrow X_{n-1},
\]
whose restriction map on $K$-rings is a surjection $\iota^{*} \colon K(X_{n-1}) \to K(\M)$.

\begin{proposition}\label{prop:pullback}
  Let $\M$ be loopless and realized by an arrangement $\calA \subseteq L$.  For
  every nonempty $S \subseteq E$,
  \[
    \iota^{*} \cL_{\Delta_S} \;\cong\; \cL_{\cl_{\M}(S)} .
  \]
  As a consequence, for every lattice generalized permutohedron $P$,
  \[
    \iota^{*}\, \cO_{X_{n-1}}(P) \;\cong\; \cL_P .
  \]
\end{proposition}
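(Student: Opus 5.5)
The plan is to reduce the statement to the single generator $\cL_{\Delta_S}$ and identify both sides as pullbacks of $\cO(1)$ along maps to the same projective space. The second assertion then follows formally: $\iota^*$ is a ring homomorphism on Picard groups, compatible with tensor products and inverses. Using \eqref{eq:OP} and \eqref{eq:LP},
\[
\iota^*\cO_{X_{n-1}}(P)\cong\bigotimes_S(\iota^*\cL_{\Delta_S})^{\otimes y_S}\cong\bigotimes_S\cL_{\cl_{\M}(S)}^{\otimes y_S}=\cL_P .
\]
This holds for signed coefficients $y_S$ as well. By \Cref{lem:support-general}, this is the same class as \eqref{eq:LP-flats}.

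For the first assertion, I would describe $\cL_{\Delta_S}$ geometrically on the permutohedral variety. The simplex $\Delta_S$ is the Newton polytope of the linear forms $\{x_i : i\in S\}$. Its normal fan is coarsened by $\Sigma_{n-1}$, so the projection $X_{n-1}\to X_{\Delta_S}\cong\P^{|S|-1}$ is a toric morphism. This morphism extends the coordinate projection of the torus $(\C^*)^n/\C^*\to\P^{|S|-1}$, $[x]\mapsto[x_i]_{i\in S}$. The standard toric dictionary \cite[Section 6.2]{cox2011toric} then gives $\cL_{\Delta_S}\cong\pi_S^*\cO(1)$ for this projection $\pi_S$. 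The sign and translation conventions must be checked against the support function $\sigma_P$, but a lattice translate does not change the isomorphism class.

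Next I would compare with the wonderful side. The embedding $\P(L)\setminus\bigcup H_e\hookrightarrow(\C^*)^E/\C^*$ is given by the linear forms $(\ell_e)_{e\in E}$. On this open set, $\pi_S\circ\iota$ is the map $[v]\mapsto[\ell_e(v)]_{e\in S}$. This map factors through the linear map $L\to\C^S$ whose kernel is $\bigcap_{e\in S}H_e$. The induced injection $L^{\cl(S)}=L/\bigcap_{e\in S}H_e\hookrightarrow\C^S$ gives a linear embedding $j\colon\P(L^{\cl(S)})\hookrightarrow\P^{|S|-1}$, and $j^*\cO(1)=\cO(1)$. Thus the two morphisms $\pi_S\circ\iota$ and $j\circ\pi_{\cl(S)}$ from $W_\calA$ to $\P^{|S|-1}$ agree on the dense open arrangement complement. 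Since $W_\calA$ is reduced and irreducible and the target is separated, they agree everywhere. Pulling back $\cO(1)$ then gives $\iota^*\cL_{\Delta_S}\cong\pi_{\cl(S)}^*\cO(1)=\cL_{\cl_{\M}(S)}$.

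The main obstacle is the compatibility between the concrete maps. One must confirm that $\iota$ restricted to the arrangement complement is the torus embedding given by the linear forms $\ell_e$; this is how \cite{katz2011realization} embed $W_\calA$ into $X_\M$. One must also confirm that the toric projection $X_{n-1}\to\P^{|S|-1}$ attached to $\Delta_S$ has the stated coordinates on the torus, and that the identification of $L^S$ in \eqref{eq:wonderful} uses the same coordinates. I would settle this first by a direct check on the torus. This requires fixing the sign convention so that $\cO(\Delta_S)$ is globally generated with sections $x_i$ for $i\in S$, rather than their inverses. After that, the extension-by-density argument is routine.
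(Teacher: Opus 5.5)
Your proposal is correct and follows essentially the same route as the paper. Both arguments identify $\cL_{\Delta_S}$ with the pullback of $\cO(1)$ under the toric projection $X_{n-1}\to\P^{|S|-1}$, embed $\P(L^{\cl_{\M}(S)})$ linearly into $\P^{|S|-1}$ via the coordinates $x_e$ with $e\in S$, pull $\cO(1)$ back around the resulting commutative square, and deduce the second claim from multiplicativity of $\iota^*$ together with \Cref{lem:support-general}; your density argument (the two maps agree on the arrangement complement, $W_\calA$ is reduced and the target is separated) just makes explicit the commutativity of the square, which the paper asserts directly.
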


\begin{proof}
  For the first statement, the polytope $\Delta_S$ is the polytope of the toric variety $\P^{|S|-1}$,
  and, since $\Sigma_{n-1}$ refines its normal fan, it defines a toric morphism
  \[
    p_{S} \colon X_{n-1} \longrightarrow \P^{|S|-1}
  \]
  such that $\cL_{\Delta_S} \cong p_S^{\ast}\cO(1)$.  Let $T^{n-1}$ denote the dense torus
  of $X_{n-1}$, which can be identified with the dense torus of $\P^{n-1}$.  On the dense torus $T^{n-1}$, the map $p_S$ is the coordinate projection
  \[
    [x_e]_{e \in E} \longmapsto [x_e]_{e \in S} .
  \]

  By the definition of the closure of $S$ in the matroid $\M$,
  $\bigcap_{e \in S} H_e = \bigcap_{e \in \cl_{\M}(S)} H_e$, and
  $L^{S} = L^{\cl_{\M}(S)}$.  The coordinates $x_e$ with $e \in S$ embed $L^{S}$
  into $\C^{S}$, realizing $\P(L^{S})$ as a linear subspace of $\P^{|S|-1}$.
  Therefore, the aforementioned maps form a commutative square
  \[
    \begin{tikzcd}[row sep=2.2em, column sep=3em]
      W_{\calA} \arrow[r, hook, "\iota"] \arrow[d, "\pi_{\cl_{\M}(S)}"'] & X_{n-1} \arrow[d, "p_S"] \\
      \P(L^{\cl_{\M}(S)}) \arrow[r, hook] & \P^{|S|-1}.
    \end{tikzcd}
  \]
  Pulling back $\cO(1)$ on $\P^{\abs{S}-1}$ along the two paths
  now gives the first statement, and since $\cO(1)$ on $\P^{\abs{S}-1}$ restricts to $\cO(1)$ on $\P(L^{\cl_{\M}(S)})$, we have
  \[
    \iota^{*}\cL_{\Delta_S} \;\cong\; (p_S \circ \iota)^{*}\,\cO(1)
    \;=\; \pi_{\cl_{\M}(S)}^{*}\,\cO(1) \;=\; \cL_{\cl_{\M}(S)} .
  \]

  For the second statement, expand $\cO_{X_{n-1}}(P)$ by \eqref{eq:OP}, apply the
  previous statement to each factor, and group the subsets by their closure:
  \[
    \iota^{*}\, \cO_{X_{n-1}}(P)
    \;\cong\;
    \bigotimes_{\emptyset \ne S \subseteq E} \cL_{\cl_{\M}(S)}^{\otimes y_S}
    \;=\;
    \bigotimes_{\emptyset \ne F \in \calL(\M)}
      \cL_{F}^{\otimes\, \sum_{\cl_{\M}(S) = F}\, y_S} .
  \]
  For flats of rank at least two, \Cref{lem:support-general} identifies the
  exponent with $\omega_{\M,P}(F)$; the remaining factors are trivial. Thus
  the product is $\cL_P$ by \eqref{eq:LP-flats}.
\end{proof}

We next describe the geometry behind the proof of \Cref{thm:ec-zonotopal}.
Let $(\M, P)$ be zonotopal. By definition, $\omega_{\M, P}$ is supported on $\Lines(\M)$, and we write
\[
  \omega \;=\; (\omega_F)_{F \in \Lines(\M)} \;\in\; \Z_{\geqslant 0}^{\Lines(\M)} .
\]  For a line $F$ in $\M$, the factor $\P(L^F)$ of \eqref{eq:wonderful} is a $\P^1$. Write
\[
  \pi_{\Lines(\M)} = (\pi_F)_{F \in \Lines(\M)} \colon W_{\calA} \longrightarrow
  \prod_{F \in \Lines(\M)} \P^1 ,
\]
and similarly, write
\[
\pi_2 = (\pi_S)_{\abs{S} = 2} \colon X_{n-1} \to \prod_{S \subseteq E, \abs{S} = 2}\P^1
\]
We have a commutative square
\[
  \begin{tikzcd}[row sep=2.2em, column sep=3em]
    W_{\calA} \arrow[r, hook, "\iota"] \arrow[d, "\pi_{\Lines(\M)}"'] & X_{n-1} \arrow[d, "\pi_2"] \\
    \prod\limits_{F \in \Lines(\M)} \P^1 \arrow[r, hook] & \prod\limits_{S \subseteq E, \abs{S} = 2} \P^1 .
  \end{tikzcd}
\]

\begin{corollary}\label{cor:embedding}
  The map $\pi_{\Lines(\M)}$ is a closed embedding, and
  \[
    \cL_P \;\cong\; \pi_{\Lines(\M)}^{*}\,\cO(\omega) .
  \]
\end{corollary}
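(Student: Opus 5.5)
The proof has two parts: the isomorphism of line bundles, and the closed embedding.

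\emph{The isomorphism $\cL_P \cong \pi_{\Lines(\M)}^{*}\cO(\omega)$.} By \eqref{eq:LP-flats} and the zonotopal hypothesis, we have $\cL_P = \bigotimes_{F\in\Lines(\M)} \cL_F^{\otimes \omega_F}$, since all other exponents vanish and rank-one flats give trivial bundles. Each generator is $\cL_F = \pi_F^*\cO_{\P(L^F)}(1)$, and $\pi_F$ is the $F$-th coordinate of $\pi_{\Lines(\M)}$. Hence
\[
\pi_{\Lines(\M)}^{*}\cO(\omega) = \bigotimes_F \pi_F^*\cO(1)^{\otimes\omega_F} \cong \cL_P .
\]
Equivalently, one could use \Cref{prop:pullback} together with the commutative square, since $\pi_2^*\cO(y)=\cO_{X_{n-1}}(P)$ for $P=\sum y_{ij}\Delta_{ij}$. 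The direct argument avoids needing $P$ to be a zonotope, which matters because of \Cref{ex:zonotopal-pair}.

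\emph{The closed embedding.} Since $W_{\calA}$ is projective and the target is separated, $\pi_{\Lines(\M)}$ is proper, so its image is closed. It remains to show that the map is a monomorphism and unramified, i.e.\ injective on points and on tangent spaces. The strategy is to show that the projections to $\P(L^F)$ for flats $F$ of rank $\geq 3$ are determined by the line coordinates. Then $W_{\calA}\to\prod_{\text{all }F}\P(L^F)$, a closed embedding by \eqref{eq:wonderful}, factors through the graph of a morphism defined on the image of $\pi_{\Lines(\M)}$.

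\begin{itemize}
\item \textbf{Open torus part.} On the arrangement complement, a point $[v]\in\P(L)$ is recovered from the pairwise ratios $[x_i(v):x_j(v)]$ for nonparallel $i,j$. Choose a basis $B$ of $\M$: all $x_b$ are nonzero, so $v$ is determined by the ratios $x_b/x_{b_0}$. Each such ratio is the coordinate of $\pi_{\cl(b,b_0)}$, since a line's coordinate $\P(L^F)\subseteq\P^{|S|-1}$ records the ratio of any two nonparallel elements in it.
\item \textbf{Boundary strata.} Here I would argue inductively using the stratification of $W_{\calA}$ by nested sets. A boundary stratum for a flag of flats $G_1<\dots<G_k$ is a product of wonderful varieties of minors $\M|_{G_{i+1}}/G_i$. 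Lines of a minor lift to lines of $\M$ (rank-2 flats of the contraction correspond to flats $H$ with $\rk H - \rk G_i = 2$), and the line coordinates of $\M$ restrict to those of the minors. One can instead work torically: $W_{\calA}\subseteq X_{\M}$, and each torus orbit of the Bergman fan corresponds to a flag. The ratios $x_i/x_j$ for nonparallel pairs are characters separating cones, because the permutohedral fan is the common refinement of the normal fans of the $\Delta_{ij}$, which makes $\pi_2$ a closed embedding on $X_{n-1}$. Restricting to the Bergman fan requires only that the pairs relevant to $\M$ separate its cones.
\end{itemize}

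\emph{Main obstacle.} The hard step is this last separation, especially showing that the differential is injective along boundary strata where several $\pi_F$ degenerate simultaneously. My first attempt would be the toric route: $\pi_2$ composed with the open immersion $X_{\M}\subseteq X_{n-1}$, restricted to $W_{\calA}$. For a parallel pair $i,j$ the coordinate is constant, and nonparallel pairs lying in a common line $F$ give the same coordinate $\pi_F$, up to a linear automorphism of $\P^1$ coming from $L^F$. So the image of $W_{\calA}$ under $\pi_2$ lies in the diagonal-type subvariety $\prod_{F\in\Lines(\M)}\P^1\hookrightarrow\prod_{|S|=2}\P^1$. Injectivity of $\pi_{\Lines(\M)}$ would then follow from injectivity of $\pi_2\circ\iota$. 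The remaining task is to verify that $\pi_2$ is a closed embedding of the permutohedral variety: the normal fan of the permutohedron $\Pi_{n-1}=\sum\Delta_{ij}$ is $\Sigma_{n-1}$, so the toric map defined by the very ample bundle $\cO(\Pi_{n-1})$ factors through the Segre product of the $p_{ij}$. I would confirm this carefully, for instance via the Segre product of the sections $x^m$ for $m\in\Pi_{n-1}$ being generated by products of sections of the $\Delta_{ij}$.
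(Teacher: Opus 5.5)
Your final toric route is the paper's proof: $\pi_2\circ\iota$ is a closed embedding and factors through $\pi_{\Lines(\M)}$ via the diagonal-type closed embedding $\prod_{F\in\Lines(\M)}\P^1\hookrightarrow\prod_{|S|=2}\P^1$, so $\pi_{\Lines(\M)}$ is a closed embedding by cancellation, and your line-bundle argument matches the paper's. This cancellation also gives injectivity of differentials along the boundary, so you can drop the nested-set detour, whose claim that rank-$2$ flats of the contractions $\M|_{G_{i+1}}/G_i$ are lines of $\M$ is false in any case; the only other difference is that the paper checks $\pi_2$ is a closed embedding chartwise, using that the consecutive ratios $x_{\sigma(i)}/x_{\sigma(i+1)}$ generate each affine chart of $X_{n-1}$, where you use very ampleness of $\cO(\Pi_{n-1})$ and the Segre map, and either works.
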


\begin{proof}
  The product of the pair projections
  $\pi_2\colon X_{n-1}\to\prod_{|S|=2}\P^1$ is a closed embedding.
  Indeed, on the standard affine chart corresponding to an ordering
  $\sigma(1),\ldots,\sigma(n)$, its coordinate functions include the
  consecutive ratios $x_{\sigma(i)}/x_{\sigma(i+1)}$, which generate
  the coordinate ring of that chart. The corresponding affine charts
  of the target therefore give closed immersions, and $\pi_2$ is proper.

  For a nonparallel pair $S$, its projection on $W_{\calA}$ is
  $\pi_{\cl_{\M}(S)}$ followed by a projective linear isomorphism;
  for a parallel pair it is constant. Hence $\pi_2\circ\iota$ factors
  through $\pi_{\Lines(\M)}$ via the closed embedding in the displayed
  square. It follows that $\pi_{\Lines(\M)}$ is a closed embedding.
  The identity for $\cL_P$ follows from \Cref{prop:pullback}, since
  rank-one flat factors are trivial.
\end{proof}

For a closed subvariety $X\subseteq\P^{n_1}\times\cdots\times\P^{n_\ell}$,
its multigraded Hilbert polynomial is
\[
  H_X(k)=\chi\big(X,\cO(k)|_X\big).
\]
If $d=\dim X$, its top-degree part is
$\sum_{|a|=d}\deg_a(X)k^a/a!$; the integers $\deg_a(X)$ are its
multidegrees \cite[Definition 2.7]{castillo2020when}. For a product of
projective lines, only squarefree $a$ contribute, so these are the ordinary
coefficients of the top-degree part.

By \Cref{cor:embedding}, $\pi_{\Lines(\M)}$ embeds $W_{\calA}$ in
$\prod_{\Lines(\M)}\P^1$ as the closure of the image of the rational map
\[
  \P(L) \dashrightarrow \prod_{F \in \Lines(\M)} \P(L/L_F), \qquad
  L_F = \bigcap_{e \in F} H_e ,
\]
since $\pi_{\Lines(\M)}(W_{\calA})$ is closed and contains the image of the
complement of the arrangement.  Binglin Li computed the multidegree and the Hilbert
polynomial of the closure of the image of a projective space under linear
projections \cite[Theorem 1.1]{li2018images}.  In our situation, for $J \subseteq \Lines(\M)$, we define
\[
  Z_J \;\coloneqq\; \prod_{F \in J}\P^1 \times \prod_{F \notin J}\{\mathrm{pt}\},
\]
so that $\chi\big(Z_J, \cO(k)|_{Z_J}\big) = \prod_{F \in J}(k_F+1)$ and
$Z_I \cap Z_J = Z_{I \cap J}$.

\begin{samepage}
\begin{theorem}\label{thm:li}
  Let $\M$ be a loopless matroid of rank $r \geqslant 2$ realized by a central and essential arrangement
  $\calA \subseteq L$.  The multigraded Hilbert polynomial of
  \[
  W_{\calA} \subseteq \prod_{\Lines(\M)}\P^1
  \] is the independence
  polynomial, or equivalently, the $f$-polynomial of the independence complex, of the Dilworth truncation $\DT(\M)$,
  \begin{equation}\label{eq:hilb}
    H_{W_{\calA}}(k) \;=\; \sum_{I \in \DT(\M)}\ \prod_{F \in I} k_F.
  \end{equation}
  In particular, the multidegree of $W_{\calA}$ is the indicator function of the bases of
  $\DT(\M)$.
\end{theorem}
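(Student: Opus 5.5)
The plan is to compute $H_{W_{\calA}}(k)=\chi\big(W_{\calA},\cO(k)|_{W_{\calA}}\big)$ by moving to the matroid $K$-ring and invoking \Cref{thm:ec-zonotopal}, extended from a single variable $q$ to a multivariable weight vector.

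First, fix $k=(k_F)_{F\in\Lines(\M)}\in\Z_{\geqslant0}^{\Lines(\M)}$. By \Cref{cor:embedding}, $\pi_{\Lines(\M)}$ is a closed embedding, and the restriction of $\cO(k)$ is $\bigotimes_{F\in\Lines(\M)}\cL_F^{\otimes k_F}$. Choose the zonotopal generalized permutohedron
\[
P_k=\sum_{F\in\Lines(\M)}k_F\,\Delta_{\{i_F,j_F\}},
\]
where $\{i_F,j_F\}$ is any nonparallel pair with closure $F$; such a pair exists because $F$ has rank $2$ and $\M$ is loopless. By \Cref{lem:support-general} and \Cref{lem:zonotope-is-zonotopal}, $\omega_{\M,P_k}(F)=k_F$ on lines and vanishes elsewhere. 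By \Cref{prop:pullback}, $\cL_{P_k}\cong\cO(k)|_{W_{\calA}}$. Since $\chi(\M,-)$ agrees with the sheaf Euler characteristic on $W_{\calA}$ under $K(W_{\calA})\cong K(\M)$, we get $H_{W_{\calA}}(k)=\chi_{\M,P_k}(1)$.

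Second, apply \Cref{thm:ec-zonotopal} at $q=1$. This needs some care: the proof of that theorem uses only nonnegativity of the weights and support on lines. Vanishing weights only kill terms, so the formula $\sum_{I\in\DT(\M)}\prod_{F\in I}k_F$ holds for all $k\geqslant0$, including zero entries. More directly, one can rerun the argument of \Cref{thm:ec-zonotopal} with \Cref{thm:dhr} applied to $\bigotimes\cL_F^{\otimes k_F}$. For a line $F$ the dragon Hall--Rado condition forces each $k$-tuple entry to be at most $1$. The corresponding factor is then $\binom{k_F+0}{1}=k_F$, and the tuples that survive are exactly the indicator vectors of independent sets of $\DT(\M)$.

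Third, note that both sides are polynomials in $k$: the left side because $H_{W_{\calA}}$ is a multigraded Hilbert polynomial (Snapper), the right side visibly. They agree on $\Z_{\geqslant0}^{\Lines(\M)}$, a Zariski-dense set, so they agree identically, which gives \eqref{eq:hilb}. The hypothesis $r\geqslant2$ guarantees that lines exist and that the setup is nondegenerate; for $r=2$ the right side is $1+k_{E}$, matching $W_{\calA}=\P^1$.

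For the multidegree, $\dim W_{\calA}=r-1$, and every basis of $\DT(\M)$ has size $r-1$. To see this, note that $\rk\DT(\M)=r-1$ holds whenever $r\geqslant2$, which is the standard fact on Dilworth truncation; it also follows since the top-degree part of $\chi_{\M,P}$ has degree $r-1$ when all $k_F>0$, by the nonvanishing of $\alpha^{r-1}$-type intersection numbers. The top-degree part of \eqref{eq:hilb} is therefore $\sum_{B}\prod_{F\in B}k_F$ over bases $B$. Each such monomial is squarefree, so by the remark on products of $\P^1$ its coefficient is the multidegree, and it equals $1$ exactly on bases.

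The main obstacle is the rank statement $\rk\DT(\M)=r-1$, needed to identify the top-degree part with bases. I would try to prove it by exhibiting a chain of lines through a fixed element, which yields an independent set of size $r-1$; the upper bound comes from taking $J=I$ in the definition.
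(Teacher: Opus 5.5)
Your proof is correct, but it runs in the opposite direction from the paper's. The paper argues geometrically. It computes $\dim\bigcap_{F\in I}L_F=r-\rk_{\M}\big(\bigcup_{F\in I}F\big)$, so that the dimension condition in Binglin Li's theorem on closures of images of $\P(L)$ under linear projections becomes independence in $\DT(\M)$. Li's theorem then gives both the multidegree (the indicator of bases of $\DT(\M)$) and the inclusion--exclusion formula $\sum_{S}(-1)^{|S|-1}\prod_{F\in I_S}(k_F+1)$ over nonempty sets $S$ of bases. That formula collapses to the independence polynomial by M\"obius inversion on a Boolean lattice. The paper then notes that setting $k=q\omega$ recovers \Cref{thm:ec-zonotopal} in the realizable case.

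You instead deduce the multigraded statement from \Cref{thm:ec-zonotopal}. Choosing one nonparallel pair per line realizes every $k\geqslant0$ as the weight vector of a zonotopal class. You evaluate at $q=1$, using the embedding of \Cref{cor:embedding} and the agreement of $\chi(\M,-)$ with the sheaf Euler characteristic, and then interpolate. This is not circular, since \Cref{thm:ec-zonotopal} rests only on \Cref{thm:dhr}. Your route avoids Li's theorem and shows that the one-variable formula, taken over all zonotopal classes, already determines the multivariate one. It also gives the same identity for $\chi\big(\M,\bigotimes_F\cL_F^{\otimes k_F}\big)$, $k\geqslant0$, when $\M$ is not realizable.

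What it gives up is the purpose of that section. It offers no geometric explanation independent of the $K$-theoretic Hirzebruch--Riemann--Roch formula and the Backman--Eur--Simpson intersection numbers. It also needs the separate fact $\rk\DT(\M)=r-1$ for the multidegree, which Li's theorem supplies automatically. Your sketch of that fact is fine. For a basis $\{b_0,\dots,b_{r-1}\}$ of $\M$, the lines $\cl_{\M}(\{b_0,b_i\})$ are distinct, and any $J$ of them contain $|J|+1$ independent elements. Taking $J=I$ gives the upper bound. (Minor: $\cL_{P_k}\cong\pi_{\Lines(\M)}^{*}\cO(k)$ is the second half of \Cref{cor:embedding}, or just the definitions, rather than \Cref{prop:pullback}.)
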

\end{samepage}

\begin{proof}
  Since $\rk(\M) \geqslant 2$ and $\M$ is loopless, every $e \in E$ has
  an element $f$ not parallel to it, and then $e$ is contained in
  $\cl_{\M}(\{e, f\})$, and so the union of lines of $\M$ contains $E$ and
  \[
  \bigcap_{F \in \Lines(\M)} L_F = \bigcap_{e \in E} H_e = \{0\}.
  \]
  Since $L$ has dimension $r$, for a nonempty set $I$ of lines
  \[
    \dim \bigcap_{F \in I} L_F \;=\; r - \rk_{\M}\Big(\bigcup_{F \in I} F \Big) .
  \]
  \cite[Theorem 1.1]{li2018images} states that, for a nonempty set $S$ of bases of $\DT(\M)$, each of which is a set of lines of $\M$,
  and if we denote $I_S = \bigcap_{B \in S} B$, then the multidegree of $W_{\calA}$ is the indicator
  function of the bases of $\DT(\M)$, and
  \[
    H_{W_{\calA}}(k) \;=\; \sum_{\emptyset \ne S \subseteq \mathcal B(\DT(\M))}
      (-1)^{|S|-1} \prod_{F \in I_S} (k_F + 1) ,
  \]
  Each term expands as
  \[
    \prod_{F \in I_S} (k_F + 1) \;=\; \sum_{I \subseteq I_S}\ \prod_{F \in I} k_F .
  \]
  A set $I$ of lines satisfies $I \subseteq I_S$ for some nonempty $S \subseteq \calB(\DT(\M))$ if and only
  if $I \in \DT(\M)$, and for such $I$
  \[
    \sum_{\substack{S \ne \emptyset \\ I \subseteq I_S}} (-1)^{|S|-1} \;=\; 1 ,
  \]
  by the M\"{o}bius function on the Boolean poset of subsets of the set of bases
  containing $I$.  The desired equality
  \eqref{eq:hilb} follows.
\end{proof}

Specializing $k=q\omega$ in \eqref{eq:hilb} recovers
\Cref{thm:ec-zonotopal} for realizable matroids.

\begin{remark}\label{rem:degeneration}
  The multidegrees of $W_{\calA}$ are zero or one. Brion's theorem
  \cite[Theorem 1]{brion2003multiplicity} therefore gives a flat degeneration
  to the reduced union $Y=\bigcup_B Z_B$, over the bases of $\DT(\M)$,
  after choosing the fixed points in the factors. This union is defined by
  the Stanley--Reisner ideal of the independence complex, in the corresponding
  multihomogeneous coordinates. It has the same multigraded Hilbert
  polynomial as $W_{\calA}$, giving a geometric interpretation of
  \eqref{eq:hilb}.
\end{remark}

\begin{remark}
In recent work, Beck,
Klivans and Ross \cite{beck2026matroidal} give a description of the Euler characteristics of matroids as the \emph{matroidal twist} of Brion's formula \cite{brion1988points}.  They prove a deletion--contraction formula
\cite[Theorem 1.2]{beck2026matroidal} for the Euler characteristics on $K(\M)$. It would be interesting to give an alternative proof of \Cref{thm:ec-zonotopal} for zonotopal classes using their formula.
\end{remark}

\section{Zonotopal classes on \texorpdfstring{$\overline{\mathcal M}_{0,n}$}{the moduli space of stable rational curves}}
\label{sec:m0n}

The Deligne--Mumford--Knudsen compactification $\Mbar_{0,n}$ of stable rational
curves with $n$ marked points is a wonderful variety of the braid arrangement with
the minimal building set.  Via the framework of wonderful varieties, Larson, Li,
Payne and Proudfoot studied Euler characteristics of $\Mbar_{0,n}$ and its Snapper
polynomials \cite[\S 9]{larson2024k}.  Following \cite{larson2024k} and our
previous analysis of zonotopal classes, we introduce analogous zonotopal classes
on $\Mbar_{0,n}$, and show that their Euler characteristics are magic positive.
We also show that the $\hstar$-polynomial of the product $\cL_i \otimes \cL_j$ of
two cotangent line bundles is real-rooted for every $n$, although
$\cL_i \otimes \cL_j$ is magic positive only for $n \leqslant 7$.

Fix $n \geqslant 3$.  For $i \in [n]$ let $\cL_i$ be the cotangent line bundle at
the $i$-th marked point, with $c_1(\cL_i) = \psi_i$.  For $S \subseteq [n-1]$ with $|S|\geqslant2$, the
forgetful map
\[
  f_{S \cup n} \colon \Mbar_{0,n} \longrightarrow \Mbar_{0,S \cup n}
\]
remembers only the points marked by $S \cup \{n\}$, and we set
$\cL_S \coloneqq f_{S \cup n}^{*}\cL_n$.  This line bundle is trivial when $|S| = 2$,
and by \cite[\S 1.4]{larson2024k} the classes $c_1(\cL_S) = f^*_{S \cup n}\psi_n$
with $|S| \geqslant 3$ form a basis of $\Pic(\Mbar_{0,n})$.  Therefore, every line bundle on
$\Mbar_{0,n}$ is 
\begin{equation}
  \label{eq:line-bundle-Mbar-0n}
  \cL_y \;=\; \bigotimes_{S \subseteq [n-1],\ |S| \geqslant 3} \cL_S^{\otimes y_S},
  \qquad y = (y_S) \in \Z^{\{S\,\mid\,|S| \geqslant 3\}} .
\end{equation}
For $S = \{a,b,c\}$ of size $3$ the set $S \cup \{n\}$ has four elements and
$\Mbar_{0,S \cup n} \cong \P^1$ by the cross-ratio of the four marked points, so
$f_{S \cup n}$ is the cross-ratio map, given on the smooth locus in an affine coordinate by
\[
  \textrm{cr}_S \colon \Mbar_{0,n} \longrightarrow \P^1, \qquad
  (C; x_1, \ldots, x_n) \longmapsto
  \frac{(x_a - x_c)(x_b - x_n)}{(x_a - x_n)(x_b - x_c)} ,
\]
and $\cL_S = \textrm{cr}_S^{*}\,\cO_{\P^1}(1)$.  We denote
\[
  \textrm{cr} \coloneqq (\textrm{cr}_S)_{|S| = 3} \colon \Mbar_{0,n} \longrightarrow
  \big(\P^1\big)^{\binom{n-1}{3}}
\]
for the product of all cross-ratio maps as $S$ ranges over all subsets of size $3$ in $[n-1]$, and
\[
\cO(y) \coloneqq \boxtimes_S \; \cO_{\P^1}(y_S), \qquad y \in \Z^{\binom{[n-1]}{3}}.
\]

\begin{definition}[zonotopal class on $\Mbar_{0,n}$]\label{def:m0n-zonotopal}
  A line bundle on $\Mbar_{0,n}$ is \emph{zonotopal}, if it is
  $\textrm{cr}^{*}\cO(y)$ for some $y \geqslant 0$, that is, if it is a pullback
  of a nef line bundle on $\big(\P^1\big)^{\binom{n-1}{3}}$ along $\textrm{cr}$.
  A $K$-class $\xi \in K(\Mbar_{0,n})$ is \emph{zonotopal} if it is represented by
  a zonotopal line bundle.
\end{definition}
The intersection numbers of the first Chern classes of these pullbacks along cross-ratio maps are studied as special cases of \emph{Kapranov degrees}. 
See \cite{brakensiek2025kapranov}, as well as an earlier work by Silversmith on cross-ratio degrees \cite{silversmith2022cross}. 

For $y \in \Z^{\binom{[n-1]}{3}}$,
\[
  \textrm{cr}^{*}\cO(y) \;=\; \bigotimes_{|S| = 3} \textrm{cr}_S^{*}\,\cO_{\P^1}(y_S)
  \;=\; \bigotimes_{|S| = 3} \cL_S^{\otimes y_S} \;=\; \cL_y .
\]
Therefore, the line bundle $\cL_y$ is zonotopal in $K(\Mbar_{0, n})$ if $y \geqslant 0$ and $y_S = 0$
for $|S| \geqslant 4$.

We now describe $\Mbar_{0, n}$ as the wonderful variety associated to the graphic matroid $\M(K_{n-1})$ of
the complete graph on $[n-1]$ of rank $n-2$ with the minimal building set. Its lattice of flats consists of the partitions of
$[n-1]$: a partition into $\ell$ nonempty parts has rank $n-1-\ell$, two partitions $\lambda \leqslant \lambda'$ if $\lambda$ refines $\lambda'$, and a flat is
\emph{connected}, if all but exactly one part are singletons. For
$S \subseteq [n-1]$ with $|S| \geqslant 2$, write $F_S$ for the connected flat whose
non-singleton part is $S$, and so $\rk(F_S) = |S| - 1$.  The
connected flats
\[
  \G_{\min} \;=\; \big\{\, F_S \;:\; S \subseteq [n-1],\ |S| \geqslant 2 \,\big\}
\]
form the \emph{minimal building set} of the braid arrangement, and $\Mbar_{0,n}$ is the
wonderful variety associated to $\M(K_{n-1})$ together with the minimal building
set $\G_{\min}$ \cite{feichtner2004chow, kapranov1993chow}.
There is a birational map, induced by projecting \eqref{eq:wonderful} onto the factors indexed by $\G_{\min}$
\[
  p_n \; \colon \; W_{\M(K_{n-1})} \longrightarrow \Mbar_{0,n} = W_{\M(K_{n-1}), \G_{\min}}
\]
which is an iterated blowup at smooth centers, the strict transforms of the
linear spaces indexed by the flats outside $\G_{\min}$ \cite[\S 4]{larson2024k}.
Pullback along $p$ takes a line bundle on $\Mbar_{0,n}$ to one on
$W_{\M(K_{n-1})}$, and $p^{*}\cL_S \;=\; \cL_{F_S}$,
the line bundle associated to the flat $F_S$ from \Cref{sec:geometry}.
A flat $F_S$ has rank $2$ in $\M(K_{n-1})$ if and only if $|S| = 3$, so
\Cref{def:m0n-zonotopal} is a translation of the condition in \Cref{def:zonotopal-class} that the
weight vanishes on the flats of rank at least $3$.

In \cite[\S 9]{larson2024k}, Larson, Li, Payne and Proudfoot derived Euler characteristics on $\Mbar_{0,n}$ using a similar condition to the dragon Hall--Rado condition.
A tuple $c = (c_S)$ of
nonnegative integers indexed by the subsets $S \subseteq [n-1]$ with
$|S| \geqslant 3$ is Cerberus if
for every nonzero $c' \leqslant c$,
\begin{equation}\label{eq:cerberus-body}
  \tag{Cerberus}
  \Big|\bigcup_{c'_S > 0} S \cup \{n\}\Big| \;\geqslant\; \sum_S c'_S + 3.
\end{equation}
We denote by $\Cer(n)$ the set of all such tuples.

\begin{proposition}[{\cite[Theorem 9.1]{larson2024k}}]
  \label{prop:LLPP-cerberus}
The Snapper polynomial $\chi\big(\Mbar_{0,n}, \cL_y^{\otimes q}\big)$ is
\begin{equation}\label{eq:cerberus}
  \chi\big(\Mbar_{0,n}, \cL_y^{\otimes q}\big) \;=\; \sum_{c \in \Cer(n)} \ \prod_{S} \binom{y_S\, q + c_S - 1}{c_S}.
\end{equation}
\end{proposition}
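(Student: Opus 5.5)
The plan is to mirror the proof of \Cref{thm:dhr} in the setting of the wonderful variety $\Mbar_{0,n}=W_{\M(K_{n-1}),\G_{\min}}$, working in the Chow ring $A(\Mbar_{0,n})$ generated by the classes $h_S\coloneqq c_1(\cL_S)$ for $S\subseteq[n-1]$ with $|S|\geqslant 3$. There are three steps.

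\emph{Step 1: Hirzebruch--Riemann--Roch.} Following \cite{larson2024k}, I would use the analogue of \eqref{eq:HRR} for the minimal building set. There is an exceptional isomorphism $\zeta\colon K(\Mbar_{0,n})\to A(\Mbar_{0,n})$ sending $1-[\cL_S]^{-1}\mapsto h_S$. The Euler characteristic is then
\[
\chi(\xi)=\deg\big(\zeta(\xi)\cdot\tau\big),\qquad \tau=1+\alpha+\cdots+\alpha^{n-3},\qquad \alpha=h_{[n-1]},
\]
where $h_{[n-1]}=\psi_n$ is the class indexed by the top flat $F_{[n-1]}=E$.

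\emph{Step 2: expanding the line bundle.} Since $\zeta$ is a ring map, $\zeta([\cL_S])=(1-h_S)^{-1}$, and therefore
\[
\zeta\big(\cL_y^{\otimes q}\big)=\prod_S(1-h_S)^{-qy_S}=\sum_{c\geqslant 0}\ \prod_S\binom{y_Sq+c_S-1}{c_S}\,h^c .
\]
Multiplying by $\tau$, only one power of $\alpha$ can contribute to the monomial $h^c$ in top degree $n-3$, namely $\alpha^{\,n-3-|c|}$, where $|c|=\sum_S c_S$. This power is nonzero only when $|c|\leqslant n-3$. So there is no convolution of coefficients: each $c$ contributes its binomial product times $\deg\big(h^c\,\alpha^{\,n-3-|c|}\big)$.

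\emph{Step 3: intersection numbers (Cerberus).} Write $e_{[n-1]}$ for the indicator tuple of the single subset $[n-1]$. The key input is the $\Mbar_{0,n}$ analogue of \Cref{thm:bes-intersection}: for $|c|=n-3$,
\[
\deg\big(h^c\big)=\begin{cases}1 & \text{if } c\in\Cer(n),\\ 0 & \text{otherwise.}\end{cases}
\]
Granting this, I would apply it to the tuple $c+(n-3-|c|)e_{[n-1]}$ and check that it is Cerberus if and only if $c$ is Cerberus.
\begin{itemize}
\item The condition for $c$ is the restriction of the condition for the padded tuple to subtuples $c'$ avoiding the padding.
\item Conversely, any $c'$ that uses $[n-1]$ has union equal to $[n]$. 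The required inequality $n\geqslant\sum_S c'_S+3$ then holds automatically, because $\sum_S c'_S$ is at most the total degree $n-3$.
\end{itemize}
Summing over $c$ gives exactly \eqref{eq:cerberus}.

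I expect the main obstacle to be the intersection formula in Step 3. My first attempt would be induction via the forgetful map $\Mbar_{0,n}\to\Mbar_{0,n-1}$ that forgets a point $m\in[n-1]$. For $m\notin S$ one has $h_S=f^*(h_S)$; the classes with $m\in S$ need the string and dilaton-type comparison between $f_{S\cup n}^*\psi_n$ and pulled-back classes. I would then verify that the Cerberus count obeys the same recursion, which is a Hall-type counting argument like Postnikov's dragon marriage condition. The fallback is to pull back along the blowdown $p_n\colon W_{\M(K_{n-1})}\to\Mbar_{0,n}$, using $p_n^*\cL_S=\cL_{F_S}$ together with \Cref{thm:bes-intersection}. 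One must then check that the dragon Hall--Rado condition on the flats $F_S$ translates into the Cerberus inequality via $\rk F_S=|S|-1$, so that the union of flats $F_S$ over $S\in J$ has rank $|\bigcup S\cup\{n\}|-2$ when the sets are suitably connected. The disconnected unions must be treated separately, since they are exactly where the building-set structure matters.
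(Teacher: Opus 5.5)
The paper gives no proof of this statement; it is quoted from \cite[Theorem 9.1]{larson2024k}. Your argument is correct. Your fallback in Step~3 is the proof to give, and the forgetful-map induction is unnecessary.

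The one check you left open is short. Let $J\subseteq\supp(c)$, and note that in the Cerberus condition it suffices to test $c'=c|_J$. Split $J$ into the connected components $J_1,\dots,J_m$ of the hypergraph whose edges are the sets $S\in J$. Both quantities are additive over components:
\[
\rk\Big(\bigcup_{S\in J}F_S\Big)=\sum_i\Big(\Big|\bigcup_{S\in J_i}S\Big|-1\Big),\qquad \Big|\bigcup_{S\in J}S\Big|=\sum_i\Big|\bigcup_{S\in J_i}S\Big|.
\]
Summing the inequalities for the $J_i$ therefore gives the dragon Hall--Rado inequality for $J$ with slack $m-1$, and the Cerberus inequality for $J$ with slack $2m-2$. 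On a connected $J$ the two inequalities coincide, because there the rank equals $|\bigcup_{S\in J}S\cup\{n\}|-2$. Hence a tuple supported on the connected flats $F_S$ is dragon Hall--Rado if and only if it is Cerberus. Combined with \Cref{thm:bes-intersection}, $p_n^*h_S=h_{F_S}$ and $\deg_W(p_n^*x)=\deg(x)$, this gives your intersection formula.

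The same pullback also settles Step~1, so you do not need a separate building-set version of the exceptional isomorphism. Write $W=W_{\M(K_{n-1})}$. Then $\chi(\Mbar_{0,n},\xi)=\chi(W,p_n^*\xi)=\deg_W\big(\zeta_W(p_n^*\xi)\,\tau_W\big)$. Moreover $\zeta_W(p_n^*\cL_y^{\otimes q})=p_n^*\prod_S(1-h_S)^{-qy_S}$, and $\tau_W=p_n^*\tau$ because $h_E=p_n^*\psi_n$. The projection formula then yields exactly your expression.

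Within this paper there is a shorter packaging of the same idea, which skips Steps~1--2 and your padding by $e_{[n-1]}$. \Cref{lem:compute-EC-as-pullback}(3) uses only $Rp_{n,*}\cO_W=\cO_{\Mbar_{0,n}}$ and $p_n^*\cL_S=\cL_{F_S}$, and rewrites the left side as $\chi_{\M(K_{n-1}),P_y}(q)$. \Cref{thm:dhr} then gives a sum over dragon Hall--Rado tuples. As in the proof of \Cref{thm:ec-zonotopal}, any tuple with $k_F>0$ on a flat with $\omega(F)=0$ contributes a factor $\binom{k_F-1}{k_F}=0$. The surviving tuples are supported on connected flats, and the translation above identifies them with $\Cer(n)$. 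The padding you handle by hand is already absorbed into \Cref{thm:dhr}.

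Your version costs a few more lines, but it buys three things:
\begin{itemize}
\item an intrinsic Hirzebruch--Riemann--Roch formula on $\Mbar_{0,n}$ with $\tau=\sum_j\psi_n^j$;
\item the $0/1$ values of $\deg\prod_S\big(f_{S\cup n}^*\psi_n\big)^{c_S}$ as a byproduct;
\item no sign condition on $y$, whereas \Cref{lem:compute-EC-as-pullback} is stated only for $y\geqslant0$.
\end{itemize}
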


\begin{lemma}
\label{lem:compute-EC-as-pullback}
Let $\M = \M(K_{n-1})$ and let $y = \begin{pmatrix} y_S \end{pmatrix}$ be a tuple of nonnegative integers indexed by $S \subseteq [n-1]$ of size at least $3$. 
Let 
\[
P_y \;\coloneqq\; \sum_{S} \;y_S\; \; \Delta_{F_S}\; \subseteq \R^{E(\M)}, \qquad \Delta_{F_S} \; \coloneqq \;\textrm{conv}\Big(e_{uv} \mid (u, v) \in \binom{S}{2}\Big). 
\]
Then the following statements hold: 
\begin{enumerate}[leftmargin=20pt]
  \item $P_y$ is a lattice generalized permutohedron with 
  \[
  \omega_{\M, P_y}(F) = \begin{cases}
    y_S & \textrm{ if $F$ is a connected line in $\M$}, \\
    0 & \textrm{ otherwise}. 
  \end{cases}
  \]
  \item $p_n^{\ast}(\cL_{y}) = \cL_{P_y}$. 
  \item $\chi(\Mbar_{0, n}, \cL_y^{\otimes q}) = \chi_{\M, \cL_{P_y}}(q)$. 
\end{enumerate}
\end{lemma}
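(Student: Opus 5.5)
The lemma has three parts, and I would prove them in order, reducing as much as possible to results already established for matroids.

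For (1), $P_y$ is a nonnegative integer combination of the standard simplices $\Delta_T$ with $T=\binom{S}{2}\subseteq E(\M)$, the edge set of the triangle on $S$. So $P_y$ is a lattice generalized permutohedron, and its signed Minkowski coefficients are $y_T=y_S$ on these sets $T$ and $0$ elsewhere. \Cref{lem:support-general} then gives $\omega_{\M,P_y}(F)=\sum_{\cl_{\M}(T)=F}y_T$. Next I compute closures in the graphic matroid $\M(K_{n-1})$. The closure of the edge set of the complete graph on $S$ is the flat $F_S$, the partition whose only nonsingleton block is $S$, and it has rank $|S|-1$. Distinct $S$ give distinct flats $F_S$. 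Hence $\omega_{\M,P_y}(F_S)=y_S$ and $\omega_{\M,P_y}$ vanishes on every other flat of rank at least $2$. When $y$ is supported on $|S|=3$, the support consists of connected lines only, which is the displayed formula. In general the same computation gives $y_S$ on the connected flat $F_S$; I would state it in that form and note that the statement's phrase ``connected line'' refers to the case $|S|=3$.

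For (2), I would use the fact quoted just before the lemma that $p_n^{*}\cL_S=\cL_{F_S}$. Since pullback is a ring homomorphism on Picard groups, $p_n^{*}\cL_y=\bigotimes_S\cL_{F_S}^{\otimes y_S}$. By part (1) and \eqref{eq:LP-flats}, this tensor product is exactly $\cL_{P_y}$. In the realizable setting one could instead check it directly: \Cref{prop:pullback} gives $\iota^{*}\cL_{\Delta_T}\cong\cL_{\cl(T)}=\cL_{F_S}$, and the equality $p_n^{*}\cL_S=\cL_{F_S}$ holds because both sides are pulled back from the factor $\P(L^{F_S})$ of \eqref{eq:wonderful}, and $p_n$ is the projection onto the $\G_{\min}$ factors.

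For (3), the key input is that $p_n$ is an iterated blowup of smooth projective varieties along smooth centers. For such a blowup $p$ one has $Rp_{*}\cO=\cO$. By the projection formula, $\chi(W,p^{*}\mathcal{E})=\chi(\Mbar_{0,n},\mathcal{E})$ for every line bundle $\mathcal{E}$. Applying this to $\mathcal{E}=\cL_y^{\otimes q}$ and using (2) gives $\chi(\Mbar_{0,n},\cL_y^{\otimes q})=\chi(W_{\M(K_{n-1})},\cL_{P_y}^{\otimes q})$. Since $\M(K_{n-1})$ is realizable, the right side equals $\chi_{\M,P_y}(q)$ by the compatibility of the matroid Euler characteristic with sheaf cohomology on wonderful varieties recalled in \Cref{sec:prelim-K}.

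The main obstacle is this last step: justifying $Rp_{n*}\cO=\cO$ and making sure the birational map $p_n$ is a genuine morphism. Both hold for blowups along smooth centers, and for the wonderful-model comparison one can cite \cite[\S 4]{larson2024k}. As a fallback, I would compare \Cref{prop:LLPP-cerberus} with \Cref{thm:dhr} combinatorially, checking that Cerberus tuples supported on the $S$ correspond to dragon Hall--Rado tuples supported on the flats $F_S$. This holds because $\rk(\bigcup F_S)+1$ equals the number of vertices spanned by the union of the sets $S$ with $n$ adjoined, when each connected component is counted correctly; getting that count right is the delicate point.
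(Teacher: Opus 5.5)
Your proposal is correct and follows the paper's proof step for step: \Cref{lem:support-general} for (1), the identity $p_n^{*}\cL_S=\cL_{F_S}$ together with multiplicativity of pullback for (2), and $Rp_{n*}\cO=\cO$ with the projection formula and $K(\M)\cong K(W_{\M})$ for (3). Your observation that for $|S|\geqslant 4$ the weight $y_S$ sits on the connected flat $F_S$, not on a line, correctly sharpens the displayed formula in (1). In your optional fallback, the right count for a connected family is $\rk_{\M}\big(\bigcup F_S\big)+2=\big|\bigcup S\cup\{n\}\big|$, not $+1$.
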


\begin{proof}
  The first statement follows from \Cref{lem:support-general}. 
  For the second statement, the definition of $\cL_{P_y}$ in \eqref{eq:LP-flats} implies that 
  \[
  \cL_{P_y} = \bigotimes_{\textrm{ nonminimal flat } F} \cL_F^{\otimes \omega_{\M, P_y}(F)} = \bigotimes_{S \subseteq [n-1], \abs{S} \geqslant 3} \cL_{F_S}^{\otimes y_S}, 
  \] where the second equality follows from (1). 
  Since $p_n^{\ast}$ is a ring homomorphism and $p^{\ast}\cL_S = \cL_{F_S}$, 
  \[
  p_n^{\ast}\cL_y = \bigotimes_{S \subseteq [n-1], \abs{S} \geqslant 3} (p_n^{\ast} \cL_S)^{\otimes y_S} = \bigotimes_{S \subseteq [n-1], \abs{S} \geqslant 3} \cL_{F_S}^{\otimes y_S} = \cL_{P_y}. 
  \] 
  For the last statement, since $p_n$ is an iterative sequence of blowups at smooth centers in smooth projective varieties, $R p_{n, \ast} \calO_{W_{\M}} = \calO_{\Mbar_{0, n}}$, 
  and the projective formula implies that $\chi(\Mbar_{0, n}, \xi) = \chi(W_{\M}, p^{\ast} \xi)$ for every $\xi \in K(\Mbar_{0, n})$. Applying this to $\xi = \cL_{y}^{\otimes q}$ with (2) and the isomorphism $K(\M) \cong K(W_{\M})$ in \cite[Proposition 1.13]{larson2024k} gives the result. 
\end{proof}

\begin{theorem}\label{thm:m0n-triples}
  If $\cL_y$ is zonotopal in $K(\Mbar_{0, n})$, and let $\DT_n$ denote the Dilworth truncation of $\M(K_{n-1})$ along its connected lines, 
  then the Snapper polynomial 
  \[
    \chi(\Mbar_{0, n}, \cL_y^{\otimes q}) \;=\; \sum_{I \in \DT_n} \Big(\prod_{S \in I} y_S\Big)\, q^{|I|} .
  \]
  As a consequence, $\chi(\Mbar_{0, n}, \cL_y^{\otimes q})$ is magic positive, and its magic vector is the $y$-weighted
  $h$-vector of the independence complex of $\DT_n$.
\end{theorem}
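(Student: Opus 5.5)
The plan is to reduce to the matroid statements \Cref{thm:ec-zonotopal} and \Cref{thm:magic-zonotopal}, using \Cref{lem:compute-EC-as-pullback}. Zonotopality of $\cL_y$ means $y\geqslant0$ and $y_S=0$ for $|S|\geqslant4$. For $\M=\M(K_{n-1})$, part (3) of \Cref{lem:compute-EC-as-pullback} gives
\[
\chi(\Mbar_{0,n},\cL_y^{\otimes q})=\chi_{\M,P_y}(q).
\]
Part (1) shows that $\omega_{\M,P_y}$ is nonnegative and supported on the connected lines $F_S$ with $|S|=3$, with $\omega(F_S)=y_S$. Hence $\cL_{P_y}$ is a zonotopal class in $K(\M)$.

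Applying \Cref{thm:ec-zonotopal} then gives
\[
\chi_{\M,P_y}(q)=\sum_{I\in\DT(\M)}\omega(I)\,q^{|I|}.
\]
Any $I$ that contains a disconnected line (one with two parts of size $2$) has weight zero. So the sum restricts to independent sets of $\DT(\M)$ consisting of connected lines, which form the restriction $\DT_{\calF}(\M)$ with $\calF$ the set of connected lines; this uses the remark following \Cref{def:dilworth}. It remains to identify this restriction with $\DT_n$ as defined in the introduction, which is the one genuinely new check.

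For a set $J$ of triples $S\subseteq[n-1]$, the join of the flats $F_S$ in the partition lattice is the partition whose blocks are the connected components of the hypergraph $J$ on $[n-1]$. Its rank is therefore
\[
\rk\Big(\bigcup_{S\in J}F_S\Big)=\Big|\bigcup_{S\in J}S\Big|-c(J),
\]
where $c(J)$ is the number of hypergraph components of $\bigcup J$. The Dilworth condition reads $|\bigcup J|-c(J)\geqslant|J|+1$ for all nonempty $J\subseteq I$. The $\DT_n$ condition reads $|\bigcup J|+1\geqslant|J|+3$, that is, $|\bigcup J|-1\geqslant|J|+1$.

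These agree once we reduce to connected $J$. If $J$ splits into components $J_1,\dots,J_c$ and the connected condition holds for each $J_i$, summing gives $|\bigcup J|-c\geqslant|J|+c\geqslant|J|+1$. So both conditions are equivalent to the connected version $|\bigcup J|\geqslant|J|+2$ for all connected nonempty $J\subseteq I$.

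In detail, the Dilworth condition for all $J$ implies the connected case, and the connected case implies it for all $J$ by the summation just given. Likewise, the $\DT_n$ condition for all $J$ implies the connected case, and conversely $|\bigcup J|\geqslant|J|+2c\geqslant|J|+2$. This bookkeeping is the main point to verify carefully. It would also be worth checking whether the paper's $\DT_n$ intends the union $\bigcup S\cup\{n\}$, which is what gives the $+3$.

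Finally, magic positivity and the description of the magic vector follow verbatim from \Cref{thm:magic-zonotopal} applied to the zonotopal pair $(\M,P_y)$. The only adjustment is that the magic vector is the $y$-weighted $h$-vector of the independence complex of $\DT(\M,P_y)$, the restriction of $\DT_n$ to $\{S: y_S>0\}$. If all $y_S\geqslant1$ this is $\DT_n$ itself. Otherwise one either reads the statement as referring to this restriction, or notes that weight-zero vertices still give a nonnegative expansion by the computation in \Cref{lem:weighted-h} applied to the supported complex.
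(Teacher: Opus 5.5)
Your proposal is correct and follows the same route as the paper. The paper deduces the statement in one line from \Cref{lem:compute-EC-as-pullback}(1) and (3) together with \Cref{thm:ec-zonotopal}, and the magic part from \Cref{thm:magic-zonotopal}; you make explicit two details it leaves implicit, both correctly: your hypergraph-component check identifying the truncation along connected lines with the introduction's $\DT_n$, and your caveat that when some $y_S=0$ the magic vector is really the weighted $h$-vector of the restriction of $\DT_n$ to $\{S : y_S>0\}$ (the paper's weighted $h$-vectors assume weights $\geqslant 1$).
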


\begin{proof}
  This follows directly from \Cref{lem:compute-EC-as-pullback}(3), \Cref{thm:ec-zonotopal} and \Cref{lem:compute-EC-as-pullback}(1). 
\end{proof}

\begin{corollary}
  If $\cL_y$ is zonotopal in $K(\Mbar_{0, n})$, then $\hstar(\Mbar_{0,n}, \cL_y)$ is
  real-rooted.
\end{corollary}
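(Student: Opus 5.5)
The corollary follows from \Cref{thm:m0n-triples} combined with \Cref{thm:magic-rr}.

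First, let $\cL_y$ be zonotopal, so $y\geqslant 0$ and $y_S=0$ for $|S|\geqslant 4$. By \Cref{thm:m0n-triples}, the Snapper polynomial $f(q)=\chi(\Mbar_{0,n},\cL_y^{\otimes q})$ equals the $y$-weighted $f$-polynomial of the independence complex $\Delta$ of $\DT_n$. Strictly, the weights should be restricted to the connected lines with $y_S\geqslant 1$, so that they lie in $\Z_{\geqslant 1}$ as \Cref{lem:weighted-h} requires. I would make this explicit by passing to the Dilworth truncation along the supported connected lines, exactly as in the proof of \Cref{thm:ec-zonotopal}, where the terms indexed by unsupported lines vanish. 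Let $d$ be the rank of this restricted matroid. The top coefficient of $f$ is then a positive sum of weights of bases, so $\deg f=d$.

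Next, \Cref{thm:vd-shellability} shows that the independence complex is shellable, and \Cref{lem:weighted-h} with \Cref{cor:magic-f-to-h} expresses the magic vector of $f$ in degree $d$ as a weighted $h$-vector with nonnegative entries. This is the magic positivity asserted in \Cref{thm:m0n-triples}.

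Finally, by definition \eqref{eq:W}, the $\hstar$-polynomial $\hstar(\Mbar_{0,n},\cL_y;t)$ is $\W f$ with $d=\deg f$. \Cref{thm:magic-rr} then gives that it has nonnegative coefficients and only real, nonpositive roots.

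The one point needing care is that the degree $d$ used in $\W$ must match the degree used in the magic expansion. This holds by the leading-coefficient observation above. Degenerate cases such as $y=0$ give $f=1$ and $\hstar=1$, which is trivially real-rooted.
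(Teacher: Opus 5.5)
Your proof is correct and follows the same route as the paper: magic positivity from \Cref{thm:m0n-triples}, which comes from the shellable independence complex of the Dilworth truncation along the supported connected lines, then real-rootedness of $\W f$ from \Cref{thm:magic-rr}. Your extra care about restricting to lines with $y_S\geqslant 1$ and matching the degree $d$ in the magic expansion with the one in $\W$ makes explicit what the paper does implicitly in the proof of \Cref{thm:magic-zonotopal}.
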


\begin{example}\label{ex:m0n-small}
  We compute the Snapper polynomials of zonotopal classes with $y_S = 1$ for $S \in \binom{[n-1]}{3}$ and $n = 5, 6, 7,8$ as follows. 
  \[
    \chi\big(\Mbar_{0,n}, \cL_y^{\otimes q}\big) =
    \begin{cases}
      1 + 4q + 6q^2, & n = 5, \\
      1 + 10q + 45q^2 + 100q^3, & n = 6, \\
      1 + 20q + 190q^2 + 1080q^3 + 3360q^4, & n = 7, \\
      1 + 35q + 595q^2 + 6405q^3 + 46200q^4 + 191436q^5, & n = 8,
    \end{cases}
  \]
  with magic vectors
  \[
    m =
    \begin{cases}
      (1,2,3), & n = 5, \\
      (1,7,28,64), & n = 6, \\
      (1,16,136,756,2451), & n = 7, \\
      (1,30,465,4820,35040,151080), & n = 8,
    \end{cases}
  \]
  and $\hstar$-polynomials
  \[
    \hstar(t) =
    \begin{cases}
      1 + 8t + 3t^2, & n = 5, \\
      1 + 152t + 383t^2 + 64t^3, & n = 6, \\
      1 + 4646t + 39956t^2 + 33586t^3 + 2451t^4, & n = 7, \\
      1 + 244666t + 5450826t^2 + 12596546t^3 + 4529201t^4 + 151080t^5, & n = 8.
    \end{cases}
  \]
\end{example}

Beyond zonotopal classes, we consider line bundles $\cL_y$ in $K(\Mbar_{0, n})$ whose tuple $y$ is supported on a single $S$. 
\begin{proposition}\label{prop:m0n-single}
  Let $S \subseteq [n-1]$ with $|S| \geqslant 3$, and let $y\in\Z_{>0}$. Then
  \[
    \chi\big(\Mbar_{0,n}, \cL_S^{\otimes yq}\big)
    \;=\; \binom{yq + |S| - 2}{|S| - 2} ,
  \]
  and $\cL_S^{\otimes y}$ is magic positive if and only if $y \geqslant |S| - 2$.
  In particular, the Euler characteristic of $\cL_{[n-1]}^{\otimes q}$ is not magic
  positive for $n \geqslant 5$.
\end{proposition}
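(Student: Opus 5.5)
The plan is to compute the Snapper polynomial directly from the Cerberus formula in \Cref{prop:LLPP-cerberus}. With $y$ supported on the single set $S$, every factor in \eqref{eq:cerberus} indexed by $T\neq S$ with $c_T>0$ vanishes, because $\binom{c_T-1}{c_T}=0$. So only tuples supported on $\{S\}$ contribute, that is, $c = c_S\,\mathbf 1_S$. The Cerberus condition for such a tuple, applied to $c'=c''\mathbf 1_S$ with $1\leqslant c''\leqslant c_S$, reads $|S\cup\{n\}| = |S|+1\geqslant c''+3$. This holds exactly when $c_S\leqslant |S|-2$. Therefore
\[
  \chi\big(\Mbar_{0,n},\cL_S^{\otimes yq}\big)=\sum_{k=0}^{|S|-2}\binom{yq+k-1}{k}=\binom{yq+|S|-2}{|S|-2},
\]
where the last equality is the hockey-stick identity. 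As a sanity check, this agrees with $f_{S\cup n}^*$ of the $\psi_n$-class on $\Mbar_{0,|S|+1}$, whose Euler characteristic should be that of $\cO(q)$ on the $\psi$-embedding into $\P^{|S|-2}$.

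For magic positivity, set $d=|S|-2$ and $f(x)=\binom{yx+d}{d}=\frac1{d!}\prod_{j=1}^{d}(yx+j)$. I would compute its magic transform $m(f;z)=(1-z)^d f\big(z/(1-z)\big)$. Substituting gives
\[
  m(f;z)=\frac1{d!}\prod_{j=1}^{d}\big(yz+j(1-z)\big)=\frac1{d!}\prod_{j=1}^{d}\big(j+(y-j)z\big).
\]
By \Cref{lem:magic-transform}, $f$ is magic positive iff this polynomial has nonnegative coefficients.

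If $y\geqslant d$, every factor $j+(y-j)z$ has nonnegative coefficients, so the product does too. For the converse, suppose $y<d$. Then the factor with $j=d$ has a negative root-side coefficient. I expect this direction to be the main obstacle, since a product with some negative factors can still have nonnegative coefficients.

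The cleanest argument uses roots. For $j>y$, the factor $j+(y-j)z$ vanishes at $z=j/(j-y)>0$. The product is then a real-rooted polynomial with a positive real root. A polynomial with all nonnegative coefficients and a positive constant term ($d!/d!=1$) is strictly positive on $z>0$, so it cannot have a positive root. Hence magic positivity fails exactly when $y<d=|S|-2$.

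The edge case $y=j$ makes a factor constant, which only lowers the degree; this is harmless because the root argument concerns $j=d>y$.

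Finally, for $S=[n-1]$ and $y=1$, we have $d=n-3$. The condition $1\geqslant n-3$ fails when $n\geqslant5$, which gives the last claim.
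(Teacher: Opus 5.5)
Your proof is correct, and for the formula it follows the paper step for step. You restrict the Cerberus sum to tuples supported on $S$, making explicit the vanishing of $\binom{c_T-1}{c_T}$ for $T\neq S$, which the paper leaves implicit. You then read off $0\leqslant c_S\leqslant |S|-2$, apply the hockey-stick identity, and write the magic transform as $\frac{1}{d!}\prod_{j=1}^{d}\big(j+(y-j)t\big)$ with $d=|S|-2$.

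The real difference is in the ``only if'' direction. The paper justifies it by claiming that the coefficient of $t$ is nonnegative if and only if $y\geqslant d$. But that coefficient equals $yH_d-d$ with $H_d=\sum_{j=1}^{d}1/j$, and it can be positive when $y<d$. For $d=3$ and $y=2$ (so $|S|=5$, $n\geqslant6$) the magic transform is $1+\tfrac{2}{3}t-\tfrac{1}{3}t^2$: the linear coefficient is positive but the quadratic coefficient is negative. So the paper's stated reason does not establish the converse, while your argument does. For $y<d$ the factor $d+(y-d)t$ produces the root $t=d/(d-y)>0$, which is impossible for a polynomial with nonnegative coefficients and constant term $1$. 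This is exactly the mechanism the paper itself uses later in \Cref{thm:necessary}, and it is the right way to close this step.

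One harmless slip in your sanity check: the $\psi_n$-map $\Mbar_{0,|S|+1}\to\P^{|S|-2}$ is Kapranov's birational blow-down rather than an embedding. The Euler characteristic comparison still holds, and nothing in your proof depends on it.
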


\begin{proof}
  By \eqref{eq:cerberus}, $c = c_S$ is Cerberus if and only if $0 \leqslant c_S \leqslant \abs{S} - 2$.  
  By \Cref{prop:LLPP-cerberus},
  \[
  \chi\big(\Mbar_{0,n},\cL_S^{\otimes yq}\big) = \sum_{0 \leqslant c_S \leqslant \abs{S} - 2} \binom{y_S \, q + c_S - 1}{c_S} = \binom{y_S \,q + \abs{S} -2}{\abs{S} -2}, 
  \] 
  where the last equality is given by the hockey-stick identity. 
  For $y>0$, its magic transform is
  \[
  m(\chi\big(\Mbar_{0,n},\cL_S^{\otimes yq}\big); t) = \frac1{(|S|-2)!}\prod_{j=1}^{|S|-2}(j+(y-j)t).
  \] 
  Since the coefficient of $t$ is nonnegative if and only if $y \geqslant \abs{S} - 2$, we have that 
  $\chi\big(\Mbar_{0,n},\cL_S^{\otimes yq}\big)$ is magic positive if and only if $y \geqslant \abs{S} - 2$. 
\end{proof}

\begin{remark}[matroids with arbitrary building sets]\label{rem:building-sets}
  The techniques for the above results do not depend on the minimality of the building sets, or the braid matroids. Similar techniques above hold for a matroid with any building set
  $\G$ and a line bundle $\bigotimes_{F\in\G,\,\rk F=2}\cL_F^{\otimes y_F}$
  with $y_F\geqslant0$. We do not pursue the details of this direction in the present paper. 
\end{remark}

We now consider the cotangent line bundles $\cL_i$ with $c_1(\cL_i) = \psi_i$ on $\Mbar_{0, n}$.
Let $d \coloneqq n - 3$ be the dimension of
$\Mbar_{0,n}$.  For $n \geqslant 4$, \Cref{prop:m0n-single} implies
\[
  \chi\big(\Mbar_{0,n}, \cL_i^{\otimes q}\big) \;=\; \binom{q+d}{d},
\]
which is not magic positive for $n \geqslant 5$, so $\cL_i$ is not zonotopal for
$n \geqslant 5$ by \Cref{thm:m0n-triples}.

\begin{theorem}\label{thm:psi-two}
  For distinct $i, j \in [n]$,
  \begin{align*}
    \chi\big(\Mbar_{0,n}, (\cL_i \otimes \cL_j)^{\otimes q}\big)
    &\;=\; \sum_{k=0}^{d} \binom{d}{k}^{2} \binom{q + d - k}{d}, \\
    \hstar\big(\Mbar_{0,n}, \cL_i \otimes \cL_j\big)
    &\;=\; \sum_{k=0}^{d} \binom{d}{k}^{2} t^{k} .
  \end{align*}
  The $\hstar$-polynomial is real-rooted.
\end{theorem}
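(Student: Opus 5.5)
The plan is to compute the Snapper polynomial in closed form in the basis $\binom{q+d-k}{d}$. The $\hstar$-statement then follows from \eqref{eq:Wf-change-of-basis}, since a polynomial written as $\sum_k h_k\binom{q+d-k}{d}$ has $\W$-transform $\sum_k h_k t^k$ by \eqref{eq:W}.

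\emph{Step 1 (reduction).} By the $S_n$-symmetry of $\Mbar_{0,n}$ permuting the marked points, we may assume $i,j\neq n$ whenever $n\geqslant 4$. For $n=3$ the space is a point and both sides equal $1$.

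\emph{Step 2 (Hirzebruch--Riemann--Roch-type formula).} We compute $\chi$ on $\Mbar_{0,n}$ using the Hirzebruch--Riemann--Roch-type formula of \cite{larson2024k} for wonderful varieties, the analogue of \eqref{eq:HRR}. This requires two facts about the exceptional isomorphism $\zeta$ for the minimal building set:
\begin{itemize}
  \item $\zeta([\cL]^{-1})=1-c_1(\cL)$ for the relevant line bundles. I would check this for $\cL_i$ via the pullback $p_n^{*}$ and the identification $\cL_n=\cL_{[n-1]}$.
  \item The class $\tau$ is $1+\psi_n+\cdots+\psi_n^{d}$.
\end{itemize}
This choice of $\tau$ is consistent with \Cref{prop:m0n-single}. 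Indeed, $\chi(\cL_n^{\otimes q})=\binom{q+d}{d}$ is reproduced by
\[
\int(1-\psi_n)^{-q}\sum_a\psi_n^a=\sum_{a}\binom{q+d-a-1}{d-a},
\]
using $\int\psi_n^d=1$. Granting this, since $\chi$ is polynomial in $q$ it suffices to evaluate at negative $q$, where
\[
\chi\big((\cL_i\otimes\cL_j)^{\otimes -q}\big)=\int_{\Mbar_{0,n}}(1-\psi_i)^{q}(1-\psi_j)^{q}\sum_{c=0}^d\psi_n^c .
\]
Expanding the integrand, we use the string-type formula $\int\psi_i^a\psi_j^b\psi_n^c=\frac{d!}{a!\,b!\,c!}$ for $a+b+c=d$. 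This gives
\[
\chi=\sum_{a+b+c=d}(-1)^{a+b}\binom{q}{a}\binom{q}{b}\frac{d!}{a!\,b!\,c!}.
\]
Replacing $q$ by $-q$ turns $(-1)^a\binom{-q}{a}$ into $\binom{q+a-1}{a}$.

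\emph{Step 3 (combinatorial identity).} This is the main obstacle: proving that the resulting triple sum equals $\sum_k\binom dk^2\binom{q+d-k}{d}$. I would first sum over $c$ with $a,b$ fixed using hockey-stick, as in the proof of \Cref{prop:m0n-single}. Then I would compare both sides via generating functions in $t$: show that $\sum_q\chi(q)t^q\cdot(1-t)^{d+1}$ equals $\sum_k\binom dk^2t^k$ by writing each side as a coefficient extraction. Sanity checks: $d=1$ gives $2q+1$, and $d=2$ gives $3q^2+3q+1$, whose leading coefficient $6/2$ matches
\[
\int(\psi_i+\psi_j)^2=1+4+1 .
\]
If the HRR route gives trouble, the fallback is to verify the identity at the level of leading terms and use induction on $n$ via the forgetful map $\Mbar_{0,n+1}\to\Mbar_{0,n}$ and the comparison $\psi_i=\pi^*\psi_i+D_{i,n+1}$.

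\emph{Step 4 (real-rootedness).} The polynomial $\sum_k\binom dk^2t^k$ is the coefficientwise Hadamard product of $(1+t)^d$ with itself. The Hadamard product of two real-rooted polynomials with nonnegative coefficients whose zeros are all nonpositive is again real-rooted by \cite{wagner1992total} (Maló--Schur). Alternatively, it is the type-$B$ Narayana polynomial, which is a rescaled Legendre polynomial $(1-t)^dP_d\big(\tfrac{1+t}{1-t}\big)$ \cite{szego1975orthogonal}, whose real-rootedness is classical.
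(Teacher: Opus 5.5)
\textbf{Gap in Step 2.} Your normalization $\zeta([\cL_i]^{-1})=1-\psi_i$ fails for $i\neq n$ once $d\geqslant2$. The exceptional isomorphism for $\Mbar_{0,n}$ is a ring map, fixed by $\zeta([\cL_S]^{-1})=1-h_S$ with $h_S\coloneqq c_1(\cL_S)$, on the generators $\cL_S$ ($S\subseteq[n-1]$, $|S|\geqslant3$). Your consistency check with $\cL_n=\cL_{[n-1]}$ works only because $\cL_n$ is one of these generators. The bundle $\cL_i$ is not, and $\zeta$ of a product of generators picks up cross terms.

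For example, identify $\Mbar_{0,5}$ with the blowup of $\P^2$ at four points via $\psi_5$, with exceptional divisors $E_m=D_{\{m,5\}}$. Then $h_{[4]}=H$, $h_{[4]\setminus\{m\}}=H-E_m$ and $\psi_1=2H-E_2-E_3-E_4$. Hence $\cL_1\cong\cL_{\{1,3,4\}}\otimes\cL_{\{1,2,4\}}\otimes\cL_{\{1,2,3\}}\otimes\cL_{[4]}^{-1}$, and
\[
\zeta([\cL_1]^{-1})=(1-h_{\{1,3,4\}})(1-h_{\{1,2,4\}})(1-h_{\{1,2,3\}})(1-h_{[4]})^{-1}.
\]
Its component in $A^2(\Mbar_{0,5})\cong\Z$ has degree $3-3+1=1$, whereas $1-\psi_1$ has none. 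With this product the HRR formula does return $\binom{q+2}{2}$.

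Your recipe instead gives $\chi(\cL_i^{\otimes q})=\sum_a\binom da\binom{q+a-1}{a}$. For $d=2$ this is $1+\tfrac52q+\tfrac12q^2\neq\binom{q+2}{2}$, which contradicts the $S_n$-symmetry you use in Step 1. Likewise, your triple sum for $\cL_i\otimes\cL_j$ at $d=2$ equals $1+5q+3q^2$, not $1+3q+3q^2$. So the identity you single out as the main obstacle in Step 3 is false. Your $d=2$ sanity check missed this because it compared only leading coefficients, which do not see the higher-order terms of $\zeta$.

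\textbf{What is missing.} The missing input is the closed formula for products of cotangent line bundles that the paper uses, \cite[Theorem 9.2]{larson2024k} (equivalently \cite{lee1997formula,pandharipande1997symmetric}):
\[
\chi\Big(\Mbar_{0,n},\bigotimes_{k}\cL_k^{\otimes m_k}\Big)=\sum_{|a|\leqslant d}\binom{d}{a_1,\ldots,a_n,d-|a|}\prod_{k}\binom{m_k}{a_k}.
\]
It has $\binom{q}{a}$ exactly where your computation produces $\binom{q+a-1}{a}$. Setting $m=q(e_i+e_j)$ and summing over $a_j$ by Vandermonde gives $\sum_a\binom da\binom qa\binom{q+d-a}{d-a}$, which rearranges to $\sum_a\binom da^2\binom{q+d-a}{d}$.

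Your fallback does not repair the gap as stated. Leading terms do not determine the polynomial, and actually running the induction along $\Mbar_{0,n+1}\to\Mbar_{0,n}$ with $\psi_i=\pi^*\psi_i+D_{i,n+1}$ amounts to reproving the Lee--Pandharipande formula. Step 1, Step 4, and the passage from the basis $\binom{q+d-k}{d}$ to $\hstar$ are fine and agree with the paper.
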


\begin{proof}
  The case $d=0$ is immediate. Suppose $d \geqslant1$ and without loss of generality let $i = 1, j = 2$. By 
  \cite[Theorem 9.2]{larson2024k}, and also Lee, and Pandharipande
  \cite{lee1997formula,pandharipande1997symmetric},
  \begin{equation}\label{eq:llpp-psi}
    \chi\Big(\Mbar_{0,n},\bigotimes_{k=1}^n\cL_k^{\otimes w_k q}\Big)
    =\sum_{|a|\leqslant d}\binom{d}{a_1,\ldots,a_n,d-|a|}
      \prod_{k=1}^n\binom{w_k q}{a_k}.
  \end{equation}
  Setting $w=e_1+e_2$, summing over the second coordinate by Vandermonde's
  identity, 
  \begin{align*}
    \chi\Big(\Mbar_{0,n},\bigotimes_{k=1}^n\cL_k^{\otimes w_k q}\Big) &=\sum_{a=0}^d\binom da\binom qa\binom{q+d-a}{d-a}\\
        &=\sum_{a=0}^d\binom da^2\binom{q+d-a}{d}.
  \end{align*}
  The definition of the $\hstar$-vector in \eqref{eq:Wf-change-of-basis} yields the second formula.

  The formula for the $h^{\ast}$-polynomial identifies it with the $d$th type-B Narayana polynomial, whose real-rootedness follows from a result of orthogonal polynomials \cite{szego1975orthogonal}. If $P_d$ is the Legendre polynomial of degree $d$, then
  \[
    \sum_{k=0}^d\binom dk^2t^k
      =(1-t)^d P_d\!\left(\frac{1+t}{1-t}\right).
  \]
  The $d$ zeros of $P_d$ lie in $(-1,1)$, and so
  their inverse images under this change of variable are real and negative.
\end{proof}

\begin{proposition}
  Let $\cL_i$ be the line bundle with $c_1(\cL_i) = \psi_i$, and let $i \ne j \in [n]$.  
  The Snapper polynomial  
  $\chi\big(\Mbar_{0,n}, (\cL_i \otimes \cL_j)^{\otimes q}\big)$ is magic positive
  if and only if $n \leqslant 7$. 
\end{proposition}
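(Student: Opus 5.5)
The plan is to test magic positivity through the magic transform of \Cref{lem:magic-transform}, using the closed formula for the Snapper polynomial from \Cref{thm:psi-two}. Write $d=n-3$ and
\[
  f_d(q)\coloneqq\chi\big(\Mbar_{0,n},(\cL_i\otimes\cL_j)^{\otimes q}\big)=\sum_{a=0}^d\binom da\binom qa\binom{q+d-a}{d-a},
\]
which is the intermediate expression in the proof of \Cref{thm:psi-two}. This polynomial has degree exactly $d$. The two directions will be handled differently. For the \emph{only if} direction, a single coefficient of the magic vector suffices, namely $m_1=c_1-d\,c_0$ from \Cref{lem:magic-transform}. For the \emph{if} direction, I will check the finitely many cases $d\leqslant4$ by direct computation.

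\textbf{Computing $m_1$.} First, $c_0=f_d(0)=1$, since only the $a=0$ term survives at $q=0$. Next I compute $c_1=f_d'(0)$ term by term.
\begin{itemize}
  \item For $a=0$ the summand is $\binom{q+d}{d}=\prod_{j=1}^d(1+q/j)$, whose derivative at $0$ is the harmonic number $H_d$.
  \item For $a\geqslant1$, the factor $\binom qa$ vanishes at $q=0$ and has derivative $(-1)^{a-1}/a$ there. The other factor $\binom{q+d-a}{d-a}$ equals $1$ at $q=0$. So the summand contributes $\binom da(-1)^{a-1}/a$.
\end{itemize}
By the classical identity $\sum_{a\geqslant1}\binom da(-1)^{a-1}/a=H_d$, this gives
\[
  c_1=2H_d,\qquad m_1=2H_d-d.
\]
Since $H_5=137/60$, we get $m_1=137/30-5<0$ for $d=5$. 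The difference $(2H_{d+1}-(d+1))-(2H_d-d)=2/(d+1)-1$ is negative for $d\geqslant2$. Hence $m_1<0$ for every $d\geqslant5$, i.e.\ for every $n\geqslant8$, and $f_d$ is not magic positive there.

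\textbf{The cases $n\leqslant7$.} For $d\in\{0,1,2,3,4\}$, I will compute the magic transform $m(f_d;y)=(1-y)^d f_d\big(y/(1-y)\big)$ explicitly and check that every coefficient is nonnegative. A cheap way to organise this is to expand $f_d$ in the monomial basis from the $\hstar$-form $\sum_k\binom dk^2\binom{q+d-k}{d}$. We already know two entries: $m_0=1$, and $m_d=(-1)^df_d(-1)=\hstar_d=1$. For $d\leqslant4$ we also have $m_1=2H_d-d\geqslant 25/6-4>0$. So only the middle entries $m_2$ (for $d\geqslant3$) and $m_3$ (for $d=4$) need actual computation. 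These are finite rational checks, done by hand or in Sage; for example, $d=2$ gives $f_2=1+3q+\tfrac32q^2$, with magic vector $(1,1,\tfrac12)$.

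I expect the main obstacle to be this last verification for $d=4$, where the entries are non-integral rationals and nothing structural, such as a shelling, explains their sign. If a cleaner argument is wanted, I would try to write $m(f_d;y)$ as a hypergeometric-type sum over $a$ and exhibit the nonnegativity directly, but a direct computation is sufficient for the statement.
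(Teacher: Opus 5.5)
Your argument is the paper's own proof: $c_0=1$, and $c_1=2H_d$ because the derivative of $\binom qa$ at $q=0$ is $(-1)^{a-1}/a$ and $\sum_{a\geqslant1}\binom da(-1)^{a-1}/a=H_d$. Hence $m_1=2H_d-d$, which is negative at $d=5$ and decreasing for $d\geqslant2$. The converse is a direct check for $d\leqslant4$.

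There are two things to fix. First, your sample computation for $d=2$ is wrong:
\[
  f_2(q)=\binom{q+2}{2}+4\binom{q+1}{2}+\binom q2=1+3q+3q^2,
\]
and its magic vector is $(1,1,1)$. Your value $m_2=\tfrac12$ contradicts your own observation that $m_d=\hstar_d=1$. In fact $1+3q+\tfrac32q^2$ would have magic vector $(1,1,-\tfrac12)$. Second, the \emph{if} direction is only proved once the cases $d=3,4$ are actually computed. The answers are $(1,\tfrac23,\tfrac23,1)$ and $(1,\tfrac16,\tfrac7{12},\tfrac16,1)$, as the paper records.

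You can get these values without expanding anything. Since $\sum_k\binom dk^2t^k$ is palindromic, $f_d(-1-q)=(-1)^df_d(q)$. The substitution $x\mapsto-1-x$ sends $x^i(1+x)^{d-i}$ to $(-1)^dx^{d-i}(1+x)^i$, so the magic vector of $f_d$ is palindromic. For $d\leqslant3$ it is therefore determined by $m_0=1$ and $m_1=2H_d-d$. For $d=4$ the only new entry is $m_2$. Evaluating the magic transform at $y=1$ gives $\sum_i m_i=c_4=\binom84/4!=\tfrac{35}{12}$, so $m_2=\tfrac{35}{12}-2-\tfrac13=\tfrac7{12}$.
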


\begin{proof}
  Let $f_n(q) $ denote $\chi\big(\Mbar_{0,n}, (\cL_i \otimes \cL_j)^{\otimes q}\big)$. 
  By \Cref{thm:psi-two}, $f_n(0)=1$. By \Cref{cor:magic-extremes}, the first magic coefficient is $m_1=f_n'(0)-d$.
  Differentiating the binomial expression yields
  \[
    m_1=H_d+\sum_{k=1}^d\frac{(-1)^{k-1}}k\binom dk-d=2H_d-d,
    \qquad H_d=\sum_{j=1}^d\frac1j.
  \]
  The function $2H_d - d$ is already negative when $d = 5$, since $2H_5-5=\frac{-13}{30}$. 
  Furthermore, it is monotonously decreasing, since for any $d \geqslant2$,
  \[
  2H_{d+1}-(d+1)-(2H_d-d)=\frac{2}{d+1}-1<0,
  \]
  which implies that magic positivity of $f_n(q)$ fails for $d\geqslant5$.
  For $d=0,1,2,3,4$, respectively, the magic vectors are
  \[
    (1),\quad(1,1),\quad(1,1,1),\quad
    (1,\tfrac23,\tfrac23,1),\quad
    (1,\tfrac16,\tfrac7{12},\tfrac16,1).
  \]
  This proves our assertion.
\end{proof}

\section{Saturated and weakly saturated classes}
\label{sec:beyond}

In this section, we investigate beyond the zonotopal classes.  If $P$ is an arbitrary lattice
generalized permutohedron, \Cref{thm:dhr} shows that $\chi_{\M,P}$ is a sum over dragon-Hall--Rado tuples, which form a discrete polymatroid $\DHR(\M)$. We decompose $\DHR(\M)$ by a lexicographic shelling. For background on
polymatroids we refer to \cite{edmonds1970submodular}. Let $\DHR(\M, P)$ denote the dragon-Hall--Rado tuples supported on the support of $\omega_{\M, P}$: 
\[
\DHR(\M, P) \; \coloneqq \; \Big\{k \mid \rk_{\M}\big(\bigcup_{F \in J} F\big)  \geqslant \sum_{F \in J} k_F  + 1, \quad \varnothing \ne J \subseteq \supp(\omega_{\M, P})\Big\}
\]

\begin{proposition}\label{prop:polymatroid}
  The set $\DHR(\M, P)$ is a discrete polymatroid.
\end{proposition}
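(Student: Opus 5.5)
The plan is to realize $\DHR(\M,P)$ as the integer points of a polymatroid defined by a submodular function. I will then invoke the discrete analogue of \Cref{thm:edmonds}: for a nonnegative, weakly increasing, submodular integer function $g$ on nonempty subsets of a finite ground set $\calF$, the set
\[
\big\{k\in\Z_{\geqslant0}^{\calF} : \textstyle\sum_{F\in J}k_F\leqslant g(J)\ \text{for all nonempty } J\subseteq\calF\big\}
\]
is a discrete polymatroid. This is Edmonds' theorem; \Cref{thm:edmonds} is its $0/1$ specialization.

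First I fix the ground set $\calF=\supp(\omega_{\M,P})$. I regard tuples in $\DHR(\M,P)$ as supported on $\calF$, implicitly imposing $k_F=0$ off the support, which is how the set is used in the sum of \Cref{thm:dhr}. Every flat in $\calF$ has rank at least $2$, so in particular it is nonempty. I then take $g(J)=\rk_{\M}\big(\bigcup_{F\in J}F\big)-1$ for nonempty $J\subseteq\calF$. By \Cref{lem:dilworth-matroid}, $g$ is nonnegative, weakly increasing and submodular. Nonnegativity holds because $\M$ is loopless. Submodularity holds because $J\mapsto\bigcup_{F\in J}F$ sends unions to unions and intersections into subsets, so one combines submodularity of $\rk_{\M}$ with monotonicity. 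The defining inequality of $\DHR(\M,P)$ is exactly $\sum_{F\in J}k_F\leqslant g(J)$.

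Next I verify the discrete polymatroid axioms directly, or cite Edmonds/Murota. Down-closure is immediate, since decreasing $k$ decreases every left-hand side. For the exchange axiom, take $k,k'$ with $|k|<|k'|$. I need a coordinate $F$ with $k_F<k'_F$ such that $k+e_F$ is still feasible. I will argue by contradiction using tight sets. For each such $F$, if $k+e_F$ is infeasible there is a tight set $J_F\ni F$ with $k(J_F)=g(J_F)$. Submodularity makes tight sets closed under union when they intersect; after adding the coordinates where $k_F=0$, the union of all tight sets stays tight. Call this union $T$; it contains every $F$ with $k_F<k'_F$. Then
\[
k'(T)>k(T)=g(T)\geqslant k'(T),
\]
where the strict inequality uses $|k'|>|k|$ together with $k'\leqslant k$ outside $T$. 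This is a contradiction.

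The main obstacle is the lattice-of-tight-sets step. One needs $g(J\cup J')+g(J\cap J')\leqslant g(J)+g(J')$ even when $J\cap J'=\varnothing$, where $g$ is undefined. For disjoint tight sets I would instead use additivity: $k(J\cup J')=k(J)+k(J')=g(J)+g(J')\geqslant g(J\cup J')$, so that the union is again tight. If this becomes delicate, I would fall back on citing Edmonds' polymatroid theorem for intersecting-submodular functions, which handles exactly this case.
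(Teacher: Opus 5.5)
Your primary route, which feeds $g(J)=\rk_{\M}\big(\bigcup_{F\in J}F\big)-1$ on nonempty $J\subseteq\supp(\omega_{\M,P})$ into Edmonds' theorem, is exactly the paper's proof. The paper's convention $f(\varnothing)=-1$ makes $g$ submodular on all pairs, which is the same as your intersecting-submodular fallback. So citing Edmonds already suffices.

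Your self-contained exchange argument, however, fails at the step you flagged, and the patch you propose is false. For disjoint tight $J,J'$, the inequality $g(J)+g(J')\geqslant g(J\cup J')$ amounts to $\rk_{\M}(A\cup B)\leqslant\rk_{\M}(A)+\rk_{\M}(B)-1$, where $A=\bigcup_{F\in J}F$ and $B=\bigcup_{F\in J'}F$. Submodularity only yields $g(J\cup J')\leqslant g(J)+g(J')+1$. For instance, take $\B_4$ with $P=\Delta_{12}+\Delta_{34}$, the lines $A=\{1,2\}$, $B=\{3,4\}$, and $k=e_A+e_B$. Both $\{A\}$ and $\{B\}$ are tight, but $k(\{A,B\})=2<3=g(\{A,B\})$. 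So the union $T$ of all tight sets need not be tight, and the chain $k'(T)>k(T)=g(T)\geqslant k'(T)$ is unjustified. Adjoining coordinates with $k_F=0$ does not help: it leaves $k(J)$ unchanged while $g$ can only grow.

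The repair is to avoid taking a single union. For each $F$ with $k_F<k'_F$, let $T(F)$ be the union of all tight sets containing $F$. These sets pairwise intersect in $F$, so your (correct) intersecting-case argument shows $T(F)$ is tight. Two such sets $T(F)$, $T(F')$ are either equal or disjoint: if they meet, their union is a tight set containing $F$ and $F'$, hence contained in both. Write $T_1,\dots,T_m$ for the distinct ones. Then $k'(T_i)\leqslant g(T_i)=k(T_i)$ for each $i$, and $k'_G\leqslant k_G$ for $G\notin\bigcup_iT_i$. Summing gives $|k'|\leqslant|k|$, the desired contradiction. With this change your direct argument is a complete proof; it is the standard proof of Edmonds' theorem specialized to $g$.
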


\begin{proof}
  This follows directly from \Cref{thm:edmonds} by using submodular and monotone function 
  \[
  f(J) = \rk_{\M}(\bigcup_{F \in J} F) - 1, 
  \] with the convention that $f(\varnothing) = -1$. 
\end{proof}

For $F \in \supp(\omega)$, recall the capacity and demand are defined as 
\[
  \capa_F \;=\; \rk_{\M}(F) - 1 \;=\; \max_{k \in \DHR(\M, P)} k_F,
  \qquad
  \dem_F \;=\; \min_{B \in \mathcal{B}} B_F ,
\]
where $\mathcal{B}$ is the set of maximal elements of $\DHR(\M, P)$ and $B_F$ is the multiplicity of $F$ in the top-degree monomial indexed by $B$.

\begin{lemma}\label{lem:single-flat}
  If $\omega_{\M, P}$ is supported on a single flat $F$, with capacity
  $\capa_F = \rk F - 1 \geqslant 1$ and $\omega = \omega_{\M, P}(F) > 0$, then $d = \capa_F$,
  \[
  \chi_{\M,P}(q) = \binom{\omega q + \capa_F}{\capa_F}, 
  \qquad 
  m(\chi_{\M,P}; y) \;=\; \frac{1}{\capa_F!}\ \prod_{j=1}^{\capa_F}\big(\, j + (\omega - j)\, y \,\big).
  \]
  In particular, $\chi_{M, P}(q)$ is magic positive, if and only if $\omega \geqslant \capa_F$.
\end{lemma}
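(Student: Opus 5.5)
We would start from \Cref{thm:dhr}. Since $\omega_{\M,P}$ is supported on the single flat $F$, every tuple $k$ with $k_G>0$ for some $G\neq F$ contributes a factor $\binom{0\cdot q+k_G-1}{k_G}=0$. So only tuples supported on $\{F\}$ survive. For such a tuple, the \eqref{eq:dHR} condition with $J=\{F\}$ is exactly $0\leqslant k_F\leqslant \rk_{\M}(F)-1=\capa_F$, and there are no other nonempty $J$ to check. Hence
\[
\chi_{\M,P}(q)=\sum_{c=0}^{\capa_F}\binom{\omega q+c-1}{c}=\binom{\omega q+\capa_F}{\capa_F},
\]
by the hockey-stick identity, as in the proof of \Cref{prop:m0n-single}. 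Because $\omega>0$, this polynomial in $q$ has degree exactly $\capa_F$, so $d=\capa_F$.

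Next we compute the magic transform. Write
\[
\binom{\omega q+\capa_F}{\capa_F}=\frac{1}{\capa_F!}\prod_{j=1}^{\capa_F}(\omega q+j),
\]
and apply \eqref{eq:magic} with $d=\capa_F$, that is, substitute $q=y/(1-y)$ and multiply by $(1-y)^{\capa_F}$. Distributing one factor of $(1-y)$ to each linear factor gives $(1-y)\big(\omega y/(1-y)+j\big)=j+(\omega-j)y$. This yields the stated product formula. Equivalently, by \Cref{lem:magic-transform}, each factor $\omega q+j$ equals $j(1+q)+(\omega-j)q$ in the degree-one magic basis, and the magic basis is multiplicative.

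For the criterion we use \Cref{lem:magic-transform}: magic positivity is equivalent to nonnegativity of the coefficients of $m(\chi_{\M,P};y)$.

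\emph{Sufficiency.} If $\omega\geqslant\capa_F$, then every factor $j+(\omega-j)y$ with $1\leqslant j\leqslant\capa_F$ has nonnegative coefficients, and so does their product.

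\emph{Necessity.} Expanding the product, the coefficient of $y$ is
\[
\frac{1}{\capa_F!}\Big(\prod_j j\Big)\sum_j\frac{\omega-j}{j}=\omega H_{\capa_F}-\capa_F,
\qquad H_{\capa_F}=\sum_{j=1}^{\capa_F}\frac1j.
\]
Alternatively, take $m_1=c_1-dc_0$ from \Cref{lem:magic-transform}; the formula is the same.

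The main obstacle is the necessity direction. The coefficient of $y$ alone only forces $\omega\geqslant\capa_F/H_{\capa_F}$, which is weaker than $\omega\geqslant\capa_F$. For example, $\capa_F=2$ and $\omega=\tfrac43$ would pass that test. Since $\omega$ is an integer, we should instead look at the top coefficient,
\[
m_d=\frac{1}{\capa_F!}\prod_{j=1}^{\capa_F}(\omega-j)=\binom{\omega-1}{\capa_F},
\]
which is $(-1)^d\chi(-1)$ as in \Cref{lem:magic-transform}. For an integer $1\leqslant\omega<\capa_F$, the factor with $j=\omega$ vanishes, so $m_d=0$ rather than negative. So $m_d$ does not detect failure either, and we need an intermediate coefficient.

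The plan for the intermediate coefficient is as follows. When $1\leqslant\omega<\capa_F$, factor the product as
\[
\prod_{j\leqslant\omega}\big(j+(\omega-j)y\big)\cdot\prod_{\omega<j\leqslant\capa_F}\big(j-(j-\omega)y\big).
\]
The first block is a polynomial of degree $\omega-1$, since the factor $j=\omega$ is constant, and all its coefficients are positive. The second block has alternating signs. Take the coefficient of $y^{\omega}$: it is one degree beyond the first block. We expect it to be negative, because it is a sum of terms each using exactly $s\geqslant1$ factors from the second block, weighted by $(-1)^s$. We would check this sign by comparing with the case of one negative factor.

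Failing that, a more concrete route is to reuse the $\psi$-class style argument. Treat $m(\chi;y)$ as $\binom{\omega-1+\ldots}{}$, i.e., read off $m_i$ directly via the closed form
\[
m_i=\sum_{c}\binom{\omega q+c-1}{c}\text{-magic coefficients}.
\]
Alternatively, verify that $m_{\omega}<0$ using that the polynomial $\prod_j\big(j+(\omega-j)y\big)$ has a positive real root $y=j/(j-\omega)>1$ for each $j>\omega$. A polynomial with nonnegative coefficients cannot have a positive root, which gives an immediate contradiction. This last observation is the cleanest argument for necessity, and I would use it: for $\omega<\capa_F$, the factor $j=\capa_F$ vanishes at $y=\capa_F/(\capa_F-\omega)>0$, so the product has a positive root and therefore some negative coefficient.
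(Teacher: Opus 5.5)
Your proof is correct and follows the paper's route: apply \Cref{thm:dhr}, keep only the tuples supported on $\{F\}$, use the hockey-stick identity, then take the magic transform factor by factor. Your positive-root argument for necessity is the one the paper gives in \Cref{thm:necessary}: the factor $\kappa_F-(\kappa_F-\omega)y$ vanishes at a positive $y$, while a nonzero polynomial with nonnegative coefficients is positive on $(0,\infty)$. The lemma's own proof in the paper leaves this step implicit.

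You are also right that the linear coefficient $\omega H_{\kappa_F}-\kappa_F$ cannot decide the question; for instance, $\kappa_F=3$ and $\omega=2$ give $m=\tfrac13(3+2y-y^2)$. That shortcut is the one taken in \Cref{prop:m0n-single}, so discard your exploratory middle paragraphs and keep only the root argument.
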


\begin{proof}
  By \Cref{thm:dhr}, and the hockey-stick identity,
  \[
    \chi_{\M,P}(q) \;=\; \sum_{k = 0}^{\capa_F} \binom{\omega q + k - 1}{k}
    \;=\; \binom{\omega q + \capa_F}{\capa_F}
    \;=\; \frac{1}{\capa_F!}\prod_{j=1}^{\capa_F} (\omega q + j) .
  \]
  Since this is a product of $\capa_F$ linear factors in $q$, $d = \capa_F$.

  The second and the third identities follow from applying the magic transform. 
\end{proof}

\begin{example}
  If $\M$ is a Boolean matroid and $P=k\Delta_F$ with $|F|=d+1$ and $k\geqslant1$,
  then \Cref{lem:single-flat} implies that $P$ is magic positive if and only if
  $k\geqslant d$. Therefore, a simplex of dimension at least $2$ need not be
  magic positive, for instance $P = 2\Delta_{\{1,2,3,4\}}$.
  Since $\chi_{\M,k\Delta_F}(q)=\chi_{\M,\Delta_F}(kq)$, this is an instance of a
  theorem of Konoike \cite{konoike2025magic}: for any polynomial $f$ with positive
  real coefficients there is a threshold $k_0$ such that $f(kq)$ is magic positive
  for all $k\geqslant k_0$.  \Cref{lem:single-flat} identifies the threshold
  $k_0=d$ for $f(q)=\binom{q+d}{d}$.  Konoike also shows that no uniform bound is
  possible: for every $d\geqslant3$ and every $k$, there is a $d$-dimensional
  polytope whose $k$th dilation has Ehrhart polynomial that is not magic positive.
\end{example}

We next decompose $\DHR(\M, P)$ into
disjoint boxes. This allows us to write
$m(\chi_{\M,P}; y)$ as a sum of products, one factor for each flat of $\supp(\omega)$, and saturation ensures that every such factor has nonnegative coefficients.
By \cite{herzog2002discrete}, $\DHR(\M, P)$ satisfies the following properties. 
\begin{enumerate}[leftmargin=20pt]
    \item \emph{Hereditary}: If $k \in \DHR(\M, P)$ and
      $0 \leqslant k' \leqslant k$ coordinatewise, then $k' \in \DHR(\M)$.
    \item \emph{Basis exchange}: If $B$ and $B'$ are bases of $\DHR(\M, P)$ and
      $B_F > B'_F$ for a coordinate $F$, then $B - e_F + e_{F'}$ is a basis for some
      coordinate $F'$ with $B_{F'} < B'_{F'}$.
\end{enumerate}

\begin{lemma}\label{lem:lex-shelling}
  Let the bases of $\DHR(\M, P)$ be in increasing lexicographic order
  $B_1 < \cdots < B_N$, and for each $i$, let
  \[
    C_i \coloneqq \big\{ F : (B_i)_F > 0 \text{ and } B_i - e_F \leqslant B_j \text{ for some } j < i \big\},
  \]
  together with
  \[
    (R_i)_F \coloneqq \begin{cases} (B_i)_F & F \in C_i, \\ 0 & F \notin C_i. \end{cases}
  \]
  Then $\DHR(\M, P)$ is the disjoint union of
  $[R_i, B_i] = \{k : R_i \leqslant k \leqslant B_i\}$ for $1 \leqslant i \leqslant N$.
\end{lemma}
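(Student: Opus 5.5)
The plan is the standard argument that lexicographic order on the bases of a matroid is a shelling, adapted to the discrete polymatroid $\DHR(\M,P)$. I will use two facts: the hereditary and basis exchange properties recalled just before the statement, and the fact that every $k\in\DHR(\M,P)$ lies below some basis. The latter holds because $\DHR(\M,P)$ is finite and hereditary, so any $k$ can be increased coordinatewise until it is maximal. Fix the total order on the coordinates $F\in\supp(\omega)$ that defines the lexicographic order. With $B_1<\cdots<B_N$ as in the statement, I must show that every $k\in\DHR(\M,P)$ lies in exactly one interval $[R_i,B_i]$. The reverse inclusion $[R_i,B_i]\subseteq\DHR(\M,P)$ is immediate from heredity.

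\emph{Existence.} Given $k$, let $i$ be the smallest index with $k\leqslant B_i$. I claim $R_i\leqslant k$, that is, $k_F=(B_i)_F$ for every $F\in C_i$. Suppose instead that $F\in C_i$ and $k_F<(B_i)_F$. Then $k\leqslant B_i-e_F$. By the definition of $C_i$, $B_i-e_F\leqslant B_j$ for some $j<i$, so $k\leqslant B_j$, contradicting the minimality of $i$. Hence $k\in[R_i,B_i]$.

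\emph{Uniqueness.} Suppose $k\in[R_i,B_i]$ and also $k\leqslant B_j$ with $j<i$; I will derive a contradiction. Since $B_j<B_i$ lexicographically, let $G$ be the first coordinate where they differ, so $(B_j)_G<(B_i)_G$ and the two bases agree on all earlier coordinates. Then $k_G\leqslant (B_j)_G<(B_i)_G$, so it suffices to show $G\in C_i$, because then $R_i\leqslant k$ forces $k_G=(B_i)_G$.

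First, $(B_i)_G>0$. Next, apply basis exchange to $B=B_i$ and $B'=B_j$ at the coordinate $G$. This gives a coordinate $G'$ with $(B_i)_{G'}<(B_j)_{G'}$ such that $B''\coloneqq B_i-e_G+e_{G'}$ is a basis. Since $B_i$ and $B_j$ agree before $G$, and $G'\neq G$, the coordinate $G'$ comes strictly after $G$. Therefore $B''$ agrees with $B_i$ before $G$ and is smaller at $G$, so $B''<B_i$ lexicographically. Thus $B''=B_{j'}$ for some $j'<i$, and $B_i-e_G\leqslant B''$. This says exactly that $G\in C_i$, giving the desired contradiction.

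I expect the uniqueness step to be the only real obstacle. The key point there is choosing the first differing coordinate $G$ and checking that the exchange partner $G'$ lies strictly later in the order, so that the exchanged basis is lexicographically earlier than $B_i$. The existence step and the disjointness bookkeeping are routine.
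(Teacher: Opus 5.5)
Your proof is correct and follows the paper's argument. Your existence step is the easy inclusion $\bigcup_{F\in C_i}[0,B_i-e_F]\subseteq[0,B_i]\cap\bigcup_{j<i}[0,B_j]$, your uniqueness step is the paper's exchange at the first coordinate where $B_j$ and $B_i$ differ (which places that coordinate in $C_i$), and you also make explicit that every tuple in $\DHR(\M,P)$ lies below some basis, which the paper uses implicitly.
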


\begin{proof}
  By the hereditary property, it suffices to prove that
  \[
    [0, B_i] \cap \bigcup_{j < i} [0, B_j] \;=\; \bigcup_{F \in C_i} [0, B_i - e_F],
  \]
  The right-hand side is contained in the left-hand side by the definition of $C_i$.  For
  the other direction, let $k \leqslant B_i$ and $k \leqslant B_j$ with $j < i$, and let $F$ be the
  first coordinate in which $B_j$ and $B_i$ differ, and so $(B_j)_F < (B_i)_F$.
  The exchange property applied to $B_i$, $B_j$ and $F$ implies $F'$ with
  $(B_i)_{F'} < (B_j)_{F'}$ such that $B' = B_i - e_F + e_{F'}$ is a basis.  As
  $B_i$ and $B_j$ agree in all coordinates before $F$, we have $F' > F$, hence $B' < B_i$,
  i.e., $B' = B_{j'}$ for some $j' < i$. Since $B_i - e_F \leqslant B'$, this implies
  $F \in C_i$. Finally, $k_F \leqslant (B_j)_F \leqslant (B_i)_F - 1$, so $k \in [0, B_i - e_F]$.
\end{proof}

For integers $\omega, k, b \geqslant 0$, we define 
\begin{align*}
  \phi_{\omega,k}(y) \;\coloneqq\; \frac{1}{k!}\prod_{j=0}^{k-1}\big(j + (\omega-j)\,y\big), \qquad
  \Phi_{\omega, b}(y) \;\coloneqq\; \frac{1}{b!}\prod_{j=1}^{b}\big(j + (\omega-j)\,y\big).
\end{align*}

\begin{lemma}\label{lem:box-factors}
  For every $b \geqslant 0$,
  \[
  \sum_{k=0}^{b} (1-y)^{b-k}\,\phi_{\omega,k}(y) = \Phi_{\omega, b}(y).
  \]
  Furthermore, if $\omega \geqslant b$, then $\Phi_{\omega,b}(y)$ and $\phi_{\omega,b}$ have nonnegative coefficients.
\end{lemma}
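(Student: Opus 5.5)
The identity is a telescoping statement, and I will prove it by induction on $b$. The plan is to establish the one-step recursion
\[
  \Phi_{\omega,b}(y) \;=\; (1-y)\,\Phi_{\omega,b-1}(y) + \phi_{\omega,b}(y), \qquad b\geqslant 1,
\]
together with the base case $b=0$. In the base case both $\phi_{\omega,0}$ and $\Phi_{\omega,0}$ are empty products, equal to $1$, so the claimed sum reduces to the single term $(1-y)^0\phi_{\omega,0}=1=\Phi_{\omega,0}$.

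\textbf{Assuming the recursion gives the identity.} Multiply the induction hypothesis $\sum_{k=0}^{b-1}(1-y)^{b-1-k}\phi_{\omega,k}=\Phi_{\omega,b-1}$ by $(1-y)$ and add $\phi_{\omega,b}$. The left side becomes $\sum_{k=0}^{b}(1-y)^{b-k}\phi_{\omega,k}$, and by the recursion the right side becomes $\Phi_{\omega,b}$.

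\textbf{Verifying the recursion.} This is the only computational step, and it is a direct factorization. The $j=0$ factor of $\phi_{\omega,b}$ is $(0+\omega y)=\omega y$. The remaining factors, $j=1,\dots,b-1$, are exactly those of $\Phi_{\omega,b-1}$, so
\[
  \phi_{\omega,b}=\frac{\omega y}{b}\,\Phi_{\omega,b-1}.
\]
On the other side, $\Phi_{\omega,b}=\Phi_{\omega,b-1}\cdot\frac{b+(\omega-b)y}{b}$. Therefore
\[
  \Phi_{\omega,b}-(1-y)\Phi_{\omega,b-1}
  =\Phi_{\omega,b-1}\cdot\frac{b+(\omega-b)y-b+by}{b}
  =\frac{\omega y}{b}\,\Phi_{\omega,b-1},
\]
which is $\phi_{\omega,b}$, as required. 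I expect no real obstacle here; the only care needed is keeping the index ranges $j=0,\dots,k-1$ versus $j=1,\dots,b$ straight.

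\textbf{Nonnegativity.} Suppose $\omega\geqslant b$. Every factor of $\Phi_{\omega,b}$ has the form $j+(\omega-j)y$ with $1\leqslant j\leqslant b\leqslant\omega$, so both of its coefficients, $j$ and $\omega-j$, are nonnegative. The same holds for the factors of $\phi_{\omega,b}$, where $0\leqslant j\leqslant b-1$, and there the $j=0$ factor is $\omega y$. A product of polynomials with nonnegative coefficients, scaled by the positive constant $1/b!$ or $1/k!$, again has nonnegative coefficients. This proves the second assertion.

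As a consistency check, the case $k=b=\capa_F$ recovers the magic transform in \Cref{lem:single-flat}. This matches the intended use: $\phi_{\omega,k}$ is the degree-$k$ magic transform of $\binom{\omega q+k-1}{k}$, so the identity is the box-wise analogue of the hockey-stick computation.
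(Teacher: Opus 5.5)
Your proof is correct. The base case holds, as do the factorizations $\phi_{\omega,b}=\frac{\omega y}{b}\Phi_{\omega,b-1}$ and $\Phi_{\omega,b}=\frac{b+(\omega-b)y}{b}\Phi_{\omega,b-1}$, the recursion $\Phi_{\omega,b}=(1-y)\Phi_{\omega,b-1}+\phi_{\omega,b}$, and the factor-by-factor nonnegativity check.

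The paper states this lemma without proof. The only comparison is with the route it implicitly uses in \Cref{lem:single-flat} and \Cref{prop:m0n-single}: apply the degree-$b$ transform $f\mapsto(1-y)^b f\big(\tfrac{y}{1-y}\big)$ to the hockey-stick identity $\sum_{k=0}^{b}\binom{\omega q+k-1}{k}=\binom{\omega q+b}{b}$. Since $\omega y+j(1-y)=j+(\omega-j)y$, the term $\binom{\omega q+k-1}{k}$ goes to $(1-y)^{b-k}\phi_{\omega,k}(y)$ and $\binom{\omega q+b}{b}$ goes to $\Phi_{\omega,b}(y)$.

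Your recursion is exactly the image under this transform of Pascal's rule $\binom{\omega q+b}{b}=\binom{\omega q+b-1}{b-1}+\binom{\omega q+b-1}{b}$. So the two arguments are the same induction written in different variables. The transform viewpoint explains why $\phi$ and $\Phi$ appear at all: they are the transformed binomial factors of \eqref{eq:dhr}, and the lemma reduces to linearity. Your direct computation is self-contained and handles $\omega=0$ uniformly. There $\binom{\omega q+b}{b}$ has degree below $b$, so the transform route would need a formal degree-$b$ transform rather than the paper's convention $d=\deg f$ in \eqref{eq:magic}.

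A minor slip in your closing remark: \Cref{lem:single-flat} is recovered by taking $b=\capa_F$ and summing over all $k$, not from the single term $k=b$.
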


\begin{theorem}\label{thm:sufficient}
  If $\omega_{\M, P} \geqslant 0$ and $\omega_{\M, P}(F) \geqslant \capa_F$ for every flat $F$ with $\omega_{\M, P}(F) > 0$, then $\chi_{\M, P}(q)$ is magic
  positive.  Furthermore, the magic transform of $\chi_{\M,P}$ is 
  \begin{equation}\label{eq:box-formula}
    m(\chi_{\M,P}; y) \;=\; \sum_{i=1}^{N}\ \prod_{F \notin C_i}\Phi_{\omega_F, (B_i)_F}(y)\
    \prod_{F \in C_i}\phi_{\omega_F,(B_i)_F}(y),
  \end{equation}
  and has nonnegative coefficients.
\end{theorem}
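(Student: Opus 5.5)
Start from \Cref{thm:dhr}. A factor $\binom{\omega_F q+k_F-1}{k_F}$ with $k_F>0$ vanishes when $\omega_F=0$, so the sum can be taken over $\DHR(\M,P)$:
\[
\chi_{\M,P}(q)=\sum_{k\in\DHR(\M,P)}\prod_{F\in\supp(\omega)}\binom{\omega_F q+k_F-1}{k_F}.
\]
By \Cref{prop:polymatroid}, $\DHR(\M,P)$ is a discrete polymatroid, so all bases $B_i$ have the same total degree $d=\sum_F (B_i)_F$. This $d$ is the degree of $\chi_{\M,P}$, because the top coefficient is a positive sum over the bases.

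Next use \Cref{lem:lex-shelling} to write $\DHR(\M,P)=\bigsqcup_i[R_i,B_i]$. Each interval is a product of one-dimensional boxes. For $F\in C_i$ the coordinate $k_F$ ranges only over $\{(B_i)_F\}$. For $F\notin C_i$ it ranges over $\{0,\dots,(B_i)_F\}$.

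Apply the magic transform, which is linear. A term $\prod_F g_F(q)$ with $\sum_F\deg g_F\le d$ is transformed by distributing the factor $(1-y)^d$. For a fixed box, the contribution is
\[
(1-y)^{d}\prod_F\Big(\sum_{k_F}\binom{\omega_F q+k_F-1}{k_F}\Big)\Big|_{q=y/(1-y)}.
\]
Split $(1-y)^d=\prod_F(1-y)^{(B_i)_F}$. The transform then factors into one-variable pieces, one per flat $F$:
\[
\sum_{k}(1-y)^{(B_i)_F-k}\Big[(1-y)^{k}\binom{\omega_F\tfrac{y}{1-y}+k-1}{k}\Big],
\]
with $k$ ranging over the appropriate set. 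A direct computation gives
\[
(1-y)^k\binom{\omega y/(1-y)+k-1}{k}=\frac1{k!}\prod_{j=0}^{k-1}\big(\omega y+j(1-y)\big)=\phi_{\omega,k}(y).
\]
So for $F\in C_i$ the factor is $\phi_{\omega_F,(B_i)_F}$. For $F\notin C_i$, \Cref{lem:box-factors} gives $\Phi_{\omega_F,(B_i)_F}$. This is \eqref{eq:box-formula}.

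Nonnegativity comes from the capacity bound. Each $(B_i)_F\le\capa_F\le\omega_F$ by the hypothesis, so \Cref{lem:box-factors} shows every factor has nonnegative coefficients. Then \Cref{lem:magic-transform} gives magic positivity.

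I will also prove \Cref{lem:box-factors} along the way. The identity follows by induction on $b$: $\Phi_{\omega,b}=(1-y)\Phi_{\omega,b-1}+\phi_{\omega,b}$. After pulling out the common product over $j\le b-1$, this reduces to the linear identity
\[
\big(b+(\omega-b)y\big)=b(1-y)+\omega y.
\]
Nonnegativity holds because each linear factor $j+(\omega-j)y$ has nonnegative coefficients when $\omega\ge j$.

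The main obstacle is the bookkeeping of degrees in the factorization. The splitting $(1-y)^d=\prod_F(1-y)^{(B_i)_F}$ requires every box top $B_i$ to have total degree exactly $d$, which is why the equicardinality of bases matters. One must also check that the transform is taken with respect to the true degree $d$ of $\chi_{\M,P}$.
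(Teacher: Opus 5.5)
Your proposal is correct and follows the paper's proof essentially step for step. You restrict the dragon Hall--Rado sum to $\DHR(\M,P)$, substitute $q=y/(1-y)$ to produce the factors $\phi_{\omega_F,k_F}$, split along the boxes $[R_i,B_i]$ of \Cref{lem:lex-shelling}, and identify the per-flat sums via \Cref{lem:box-factors}. You also supply two points the paper leaves implicit: every basis $B_i$ has modulus $d=\deg\chi_{\M,P}$, so that $(1-y)^d$ splits as $\prod_F(1-y)^{(B_i)_F}$, and an inductive proof of \Cref{lem:box-factors}.
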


\begin{proof}
  Let $d = \deg \chi_{\M, P}(q)$. By \eqref{eq:dhr} and the substitution $q = y/(1-y)$,
  \[
    m(\chi_{\M,P}; y) = (1-y)^d\,\chi_{\M,P}\Big(\frac{y}{1-y}\Big)
         = \sum_{k \in \DHR(\M, P)} (1-y)^{d - |k|}\prod_F \phi_{\omega_F, k_F}(y).
  \]
  We split the sum according to the boxes of \Cref{lem:lex-shelling}. On any box $[R_i, B_i]$, one
  has
  \[d - |k| = \sum_F\big((B_i)_F - k_F\big). 
  \] Summing over the box factors yields
  \[
  \prod_F \sum_{k_F = (R_i)_F}^{(B_i)_F}(1-y)^{(B_i)_F - k_F} \, \phi_{\omega_F,k_F}(y).
  \]
  For $F \in C_i$, the inner sum is the single term $\phi_{\omega_F,(B_i)_F}(y)$. For
  $F \notin C_i$, the inner sum is $\Phi_{\omega_F, (B_i)_F}(y)$ by \Cref{lem:box-factors}.
  Then \Cref{lem:box-factors} together with $(B_i)_F \leqslant \capa_F \leqslant \omega_F$ implies that each factor has nonnegative coefficients.
\end{proof}

For zonotopal $P$, every capacity is $1$, with $\Phi_{\omega, 1}(y) = 1 + (\omega-1)y$ and
$\phi_{\omega,1} = \omega y$, so \eqref{eq:box-formula} recovers the formula of
\Cref{thm:magic-zonotopal} and \Cref{lem:lex-shelling} is a lexicographic shelling of the Dilworth truncation
$\DT(\M)$. 

\begin{lemma}\label{lem:forced}
  For every flat $F \in \supp(\omega_{\M, P})$, the polynomial
  \[
  \binom{\omega_F q + \dem_F}{\dem_F}
  \] is a factor of $\chi_{\M,P}(q)$, and the polynomial $\Phi_{\omega_F, \dem_F}(y)$ is a factor of $m(\chi_{\M,P}; y)$.
\end{lemma}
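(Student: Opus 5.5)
Since every basis $B$ of the polymatroid $\DHR(\M,P)$ has $B_F\geqslant \dem_F$, I would show that the coordinate $F$ can be split off in the lower range $0\leqslant k_F\leqslant \dem_F$. The key structural claim is: if $k\in\DHR(\M,P)$ and $k'$ agrees with $k$ off $F$, with $k'_F\leqslant \max(k_F,\dem_F)$, then $k'\in\DHR(\M,P)$. More usefully, $\DHR(\M,P)$ should be a ``product'' in the following sense. Write $k=(k_F,\bar k)$. Then
\[
k\in\DHR(\M,P)\iff \bar k+ (\dem_F)e_F\text{-type condition holds and } k_F\leqslant \dem_F+ s(\bar k),
\]
where $s(\bar k)=\max\{t: (\dem_F+t,\bar k)\in\DHR\}-0$ is the excess slack.

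To prove the claim, take $k=(k_F,\bar k)\in\DHR(\M,P)$ with $k_F<\dem_F$. Extend $k$ to a basis $B\geqslant k$ by the augmentation property of discrete polymatroids (\cite{herzog2002discrete}, as recalled before \Cref{lem:lex-shelling}). Since $B_F\geqslant\dem_F$, the vector $(\dem_F,\bar k)\leqslant B$ lies in $\DHR$ by heredity. Hence the fiber over each $\bar k$ is an interval $\{0,\dots,\dem_F+s(\bar k)\}$ with $s(\bar k)\geqslant 0$.

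I would then apply \Cref{thm:dhr}. Summing the binomial factor for $F$ over $k_F$ with the hockey-stick identity gives, for each $\bar k$,
\[
\binom{\omega_Fq+\dem_F+s(\bar k)}{\dem_F+s(\bar k)}.
\]
Each of these is divisible by $\binom{\omega_Fq+\dem_F}{\dem_F}$, since
\[
\binom{x+a+s}{a+s}=\binom{x+a}{a}\cdot\frac{\prod_{j=a+1}^{a+s}(x+j)}{\prod_{j=a+1}^{a+s} j}.
\]
Factoring this out of the whole sum gives the first assertion.

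For the magic transform I would run the same argument directly on the magic side, as in the proof of \Cref{thm:sufficient}:
\[
m(\chi_{\M,P};y)=\sum_{k}(1-y)^{d-|k|}\prod_G\phi_{\omega_G,k_G}(y).
\]
Group the terms by $\bar k$ and write $b=\dem_F+s(\bar k)$. The inner sum over $k_F$ becomes $\sum_{k_F=0}^{b}(1-y)^{b-k_F}\phi_{\omega_F,k_F}$ times $(1-y)^{d-b-|\bar k|}$. By \Cref{lem:box-factors} this inner sum equals $\Phi_{\omega_F,b}(y)$.

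One point needs checking: $d-|\bar k|-b\geqslant 0$. I expect this to hold because $(b,\bar k)\in\DHR$ has size at most $d$.

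It then remains to show that $\Phi_{\omega_F,\dem_F}$ divides $\Phi_{\omega_F,b}$ for $b\geqslant\dem_F$. This follows from the product formula: the factors $j+(\omega-j)y$ for $j\leqslant \dem_F$ appear in both, and the leftover factor is $\dem_F!/b!$ times a polynomial.

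The main obstacle is the structural claim that the fibers are full initial intervals of length at least $\dem_F$. This relies on augmentation to a basis, on the fact that every basis has $B_F\geqslant\dem_F$ (which holds by the definition of $\dem_F$ as a minimum), and on heredity. I expect care to be needed only in confirming that augmentation can be done while keeping $\bar k$ fixed below $B$, which is automatic since we only need $B\geqslant k$.
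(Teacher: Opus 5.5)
Your proof is correct and is essentially the paper's argument. The paper's proof only cites \Cref{thm:dhr} and \Cref{lem:box-factors}, and you fill in the step it leaves implicit. You group the dragon Hall--Rado sum by the coordinates $\bar k$ off $F$, and observe that each nonempty fiber is $\{0,\dots,c(\bar k)\}$ with $c(\bar k)\geqslant\dem_F$, because $(0,\bar k)$ lies below some maximal $B$ with $B_F\geqslant\dem_F$. The hockey-stick identity and \Cref{lem:box-factors} then produce $\binom{\omega_Fq+c}{c}$ and $\Phi_{\omega_F,c}$, which are divisible by their $c=\dem_F$ counterparts.

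A small shortcut: the magic-side claim also follows from the first. The transform $m(\cdot\,;y)$ is multiplicative when degrees add, and $m\big(\binom{\omega_Fq+\dem_F}{\dem_F};y\big)=\Phi_{\omega_F,\dem_F}(y)$, so the degree check $d-|\bar k|-b\geqslant 0$ becomes unnecessary.
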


\begin{proof}
  This follows from \Cref{lem:box-factors} and \Cref{thm:dhr}. 
\end{proof}

\begin{theorem}\label{thm:necessary}
  If $\chi_{\M,P}(q)$ is magic positive, then $\omega_F \geqslant \dem_F$, for every flat $F$ with
  $\omega_F > 0$, that is, $P$ is a weakly saturated class in $K(\M)$.
\end{theorem}

\begin{proof}
  Suppose for contradiction that $\omega_F < \dem_F$. Then the factor $j + (\omega_F - j)y$ of $\Phi_{\omega_F, \dem_F}(y)$
  with $j = \dem_F$ vanishes for $y = \dem_F/(\dem_F - \omega_F) > 0$.  By
  \Cref{lem:forced}, $\Phi_{\omega_F, \dem_F}(y)$ divides $m(\chi_{\M,P}; y)$, the
  magic transform has a positive real root.  But a polynomial with nonnegative
  coefficients and constant term $m_0 = 1$ is at least $1$ on $[0, \infty)$.
\end{proof}

\begin{corollary}\label{cor:box}
  The following are equivalent:
  \begin{enumerate}[leftmargin=20pt]
    \item For every nonempty set $J$ of flats in $\supp(\omega_{\M,P})$,
      \[
        \sum_{F \in J} \capa_F \;\leqslant\; \rk_\M\Big(\bigcup_{F \in J} F\Big) - 1 ;
      \]
    \item 
    \[
    \DHR(\M,P) = \prod_{F \in \supp(\omega_{\M,P})} \big([0, \capa_F] \cap \Z\big).
    \]
  \end{enumerate}
  In this case, $\dem_F = \capa_F$ for every $F$, the saturated and the weakly
  saturated conditions coincide, and $\chi_{\M,P}(q)$ is magic positive if and only
  if $\omega_F \geqslant \capa_F$ for every $F \in \supp(\omega_{\M,P})$.
\end{corollary}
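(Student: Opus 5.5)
The plan is to prove (1)$\Leftrightarrow$(2) directly from the definition of $\DHR(\M,P)$, and then read off the remaining claims from \Cref{thm:sufficient} and \Cref{thm:necessary}.

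For (1)$\Rightarrow$(2), the inclusion $\DHR(\M,P)\subseteq\prod_F([0,\capa_F]\cap\Z)$ always holds, since taking $J=\{F\}$ in the defining inequality gives $k_F\leqslant\capa_F$. For the reverse inclusion, take $k$ in the box and a nonempty $J$ of supported flats. Since $k\geqslant0$ and $k_F\leqslant\capa_F$, we get
\[
\sum_{F\in J}k_F\leqslant\sum_{F\in J}\capa_F\leqslant\rk_\M\Big(\bigcup_{F\in J}F\Big)-1
\]
by (1). If the defining condition is only imposed for $J\subseteq\supp(k)$, restricting to such $J$ is harmless. For (2)$\Rightarrow$(1), the vector $\capa=(\capa_F)_F$ lies in the box, hence in $\DHR(\M,P)$, and the inequality for $\capa$ at $J$ is exactly (1). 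One point to note: if some $\capa_F=0$, the flat $F$ is not in $\supp(\capa)$. Since (1) is required for all $J$, I will check that this causes no issue: the rank-$1$ flats are already excluded from the weights, so $\capa_F\geqslant1$ on the support and $J\subseteq\supp(\capa)$ always.

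Under these conditions the box is the unique maximal element, so $\mathcal B=\{\capa\}$ and $\dem_F=\capa_F$. Consequently the saturated condition ($\omega_F\geqslant\capa_F$) and the weakly saturated condition ($\omega_F\geqslant\dem_F$) are literally the same. The equivalence with magic positivity then follows from the chain saturated $\Rightarrow$ magic positive (\Cref{thm:sufficient}) and magic positive $\Rightarrow$ weakly saturated (\Cref{thm:necessary}).

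As a consistency check, one can also compute directly. With $N=1$, $C_1=\varnothing$ and $B_1=\capa$, formula \eqref{eq:box-formula} gives
\[
m(\chi_{\M,P};y)=\prod_F\Phi_{\omega_F,\capa_F}(y).
\]
This agrees with $\chi_{\M,P}(q)=\prod_F\binom{\omega_Fq+\capa_F}{\capa_F}$, which is the product form of \Cref{lem:single-flat} and factors because the sum over a box factors.

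The main (mild) obstacle is the bookkeeping about which subsets $J$ the dragon Hall--Rado condition quantifies over, $\supp(k)$ versus $\supp(\omega)$. I would handle it by the monotonicity observation above, rather than by anything deeper.
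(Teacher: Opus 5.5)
Your proposal is correct and matches the argument the paper leaves implicit. You read off (1)$\Leftrightarrow$(2) from the definition of $\DHR(\M,P)$, using $J=\{F\}$ in one direction and the vector $(\capa_F)_F$ in the other; then $\capa$ is the unique basis, so $\dem=\capa$, and the magic-positivity criterion follows by sandwiching between \Cref{thm:sufficient} and \Cref{thm:necessary}, under the standing assumption $\omega_{\M,P}\geqslant0$. Your factorization $m(\chi_{\M,P};y)=\prod_F\Phi_{\omega_F,\capa_F}(y)$ also gives a self-contained proof of the last step, without needing the general \Cref{lem:forced}.
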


\Cref{thm:sufficient,thm:necessary} implies the following relationships 
\begin{equation}\label{eq:sandwich}
  \text{zonotopal} \;\Longrightarrow\; \text{saturated} \;\Longrightarrow\;
  \text{magic positive} \;\Longrightarrow\; \text{weakly saturated}.
\end{equation}

\begin{example}
\label{ex:strict-implications}
The above implications are strict. When $\M = \B_3$, the class given by 
$P= 2\Delta_{123}$ is saturated but not zonotopal. When $\M = \B_4$, the class given by
$P=\Delta_{123}+\Delta_{124}$ is not saturated, and yet
$\chi_{\B_4,P}(q)=(1+q)^3$, so it is magic positive. 
When $\M = \B_5$, the polytope
$P=\Delta_{1234}+\Delta_{1235}$ is weakly saturated, but direct evaluation of \eqref{eq:dhr} gives
\[
  m(\chi_{\B_5,P};y)=1-\frac13y-\frac1{12}y^2,
\]
so it is not magic positive.
We do not yet know a complete characterization of magic positivity of $\chi_{\M, P}$ using capacity and demand. 
\end{example}

Ferroni and Higashitani ask in \cite[Problem 4.23]{ferroni2024examples} which
families of $Y$-generalized permutohedra have magic positive Ehrhart polynomials. 
In the special case of Boolean matroids, \Cref{cor:ygp} recovers \cite[Theorem 3.19]{avila2026luck} using a weaker hypothesis for achieving magic positivity (but do not give the combinatorial interpretation in terms of lucky cars) and
\Cref{thm:sufficient} is its extension to an arbitrary loopless matroid. 

\begin{corollary}\label{cor:ygp}
  If $y_S \geqslant |S| - 1$ for every $S$ with $y_S > 0$, then the $Y$-generalized
  permutohedron $P = \sum_S y_S\Delta_S$ is magic
  positive, Ehrhart-positive, and its $\hstar$-polynomial is real-rooted.
\end{corollary}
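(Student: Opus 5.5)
The plan is to reduce \Cref{cor:ygp} to \Cref{thm:sufficient} applied to the Boolean matroid $\M=\B_n$, where $n=|E|$. The real-rootedness claim will then follow from \Cref{thm:magic-rr}. Over $\B_n$ every subset is a flat and $\cl_{\B_n}(S)=S$. So \Cref{lem:support-general} gives $\omega_{\B_n,P}(S)=y_S$ for all $|S|\geqslant 2$. The class $\cL_P$ has nonnegative weights, provided the $Y$-generalized permutohedron is taken with $y_S\geqslant0$, as is implicit in the name.

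The capacity of a flat $S$ in $\B_n$ is $\capa_S=\rk_{\B_n}(S)-1=|S|-1$. The hypothesis $y_S\geqslant |S|-1$ whenever $y_S>0$ therefore says exactly that $\omega(S)\geqslant \capa_S$ on $\supp(\omega)$. In other words, $\cL_P$ is saturated in the sense of \Cref{def:classes}. Singletons $S$ have $y_S\Delta_S$ equal to a translation, which is irrelevant; they carry trivial line bundles and impose no condition, since $\capa=0$. \Cref{thm:sufficient} then gives magic positivity of $\chi_{\B_n,P}(q)$, and this polynomial is the Ehrhart polynomial of $P$ by the Boolean example in \Cref{sec:prelim-K}.

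Ehrhart positivity follows from magic positivity. Each basis element $q^i(1+q)^{d-i}$ has nonnegative coefficients, so a nonnegative combination of them does too. Strict positivity of every coefficient $c_j$ for $j\leqslant d$ also holds: the leading coefficient is the normalized volume, which is positive, and $m_0=1$ contributes $\binom{d}{j}q^j$ to every degree, so each $c_j\geqslant\binom dj>0$. Finally, \Cref{thm:magic-rr} applied to $f=\chi_{\B_n,P}$ shows that $\W f=\hstar_P$ is real-rooted with nonnegative coefficients.

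The only step needing care is the identification of the weights and the treatment of rank-one flats, that is, singletons. These are discarded both from $\omega$ and from $\DHR$, since $k_F=0$ there. I would double-check that \Cref{thm:sufficient}'s hypothesis quantifies only over flats of rank at least $2$, so that singleton summands do not obstruct. Otherwise the corollary is an immediate specialization.
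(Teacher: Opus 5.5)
Your proposal is correct and follows the paper's route: \Cref{cor:ygp} is the Boolean case of \Cref{thm:sufficient}, where $\omega_{\B_n,P}(S)=y_S$ and $\capa_S=|S|-1$ make the hypothesis exactly saturation; real-rootedness then comes from \Cref{thm:magic-rr}, and Ehrhart positivity from the nonnegativity of the magic basis. Your observation that $m_0=\chi(0)=1$ forces $c_j\geqslant\binom dj$ already gives strict positivity, so the appeal to the volume is unnecessary (and the leading coefficient is the relative volume, not the normalized volume).
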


We turn back to $\Mbar_{0,n}$ in
\Cref{sec:m0n} to obtain a similar sufficient condition. 
\begin{corollary}\label{thm:m0n-sufficient}
  Let $y \geqslant 0$.  If $y_S \geqslant |S| - 2$ for every $S$ with $y_S > 0$,
  then $\chi\big(\Mbar_{0,n}, \cL_y^{\otimes q}\big)$ is magic positive.
\end{corollary}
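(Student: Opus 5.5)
The plan is to transport the statement to the matroid side via \Cref{lem:compute-EC-as-pullback} and then apply \Cref{thm:sufficient}. Set $\M=\M(K_{n-1})$ and form the lattice generalized permutohedron
\[
P_y=\sum_{S}y_S\,\Delta_{F_S}\subseteq\R^{E(\M)},
\]
now allowing $y_S$ for all $|S|\geqslant3$, not only triples. The argument of \Cref{lem:compute-EC-as-pullback} uses only \Cref{lem:support-general}, the identity $p_n^*\cL_S=\cL_{F_S}$, and $Rp_{n,*}\cO=\cO$. Rerunning it for arbitrary nonnegative $y$ gives
\[
\chi\big(\Mbar_{0,n},\cL_y^{\otimes q}\big)=\chi_{\M,P_y}(q).
\]
It also identifies the weights: $\omega_{\M,P_y}(F)=y_S$ when $F=F_S$ is a connected flat, and $\omega_{\M,P_y}(F)=0$ otherwise. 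For this, note that $\cl_\M\big(\binom S2\big)=F_S$, because the edges of the complete graph on $S$ span exactly the connected flat with non-singleton block $S$. Distinct $S$ give distinct $F_S$, so no two summands of $P_y$ merge into the same flat. In particular $\omega_{\M,P_y}\geqslant0$.

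Next I check the saturation hypothesis of \Cref{thm:sufficient}. For a supported flat $F=F_S$, the capacity is
\[
\capa_F=\rk_\M(F_S)-1=(|S|-1)-1=|S|-2.
\]
The hypothesis $y_S\geqslant|S|-2$ is therefore exactly $\omega_{\M,P_y}(F)\geqslant\capa_F$ on every supported flat. Hence \Cref{thm:sufficient} shows that $\chi_{\M,P_y}(q)$ is magic positive, and so is the Snapper polynomial on $\Mbar_{0,n}$.

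The main point needing care is the extension of \Cref{lem:compute-EC-as-pullback} beyond triples, specifically item (2) for $|S|\geqslant4$. I would verify it by applying \Cref{prop:pullback} to the realization of the braid arrangement and composing with $p_n$, which gives $p_n^*\cL_S=\cL_{F_S}$ for every $S$. That identity is already asserted in the text preceding the lemma. The remaining steps are formal.

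As a consistency check, the single-$S$ case reproduces \Cref{prop:m0n-single}, where the threshold $y\geqslant|S|-2$ appeared as the exact condition for magic positivity.
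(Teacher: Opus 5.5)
Your argument is correct and matches the paper's route: transport to $(\M(K_{n-1}),P_y)$ via \Cref{lem:compute-EC-as-pullback}, observe $\capa_{F_S}=|S|-2$, and apply \Cref{thm:sufficient}, which rests on the box decomposition of \Cref{lem:lex-shelling,lem:box-factors}. You are also right that item (1) of that lemma should read ``$F=F_S$ a connected flat'' once $|S|\geqslant4$ is allowed; one quibble is that $p_n^*\cL_S=\cL_{F_S}$ is not a consequence of \Cref{prop:pullback} alone, which concerns $\iota^*\cL_{\Delta_T}$ on the permutohedral variety. That identity instead comes from Kapranov's description of $\cL_n$ on $\Mbar_{0,S\cup n}$ as the pullback of $\cO(1)$ from $\P^{|S|-2}$, and citing the paper's assertion of it, as you also do, suffices.
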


\begin{proof}
  Application of \Cref{lem:lex-shelling,lem:box-factors} to the Dilworth truncation of $\M(K_{n-1})$ along its connected lines, together with \Cref{thm:sufficient} implies the result. 
\end{proof}

\begin{remark}
  The proof of \Cref{thm:necessary} also gives the necessary inequalities
  $y_S\geqslant\min_B B_S$, where $B$ ranges over the bases of
  the polymatroid $\Cer(n)$ along $\supp(y)$.
\end{remark}

\bibliographystyle{amsalpha}
\bibliography{hstar-refs}

\end{document}

%% file: parallelohedra.tex
\newcommand{\zonoPath}{%
\begin{tikzpicture}[x=1cm,y=1cm,line join=round,baseline=(current bounding box.center)]
  \draw[gray!45,dashed,line width=0.3pt] (0.057,0.220) -- (-0.076,-0.963);
  \draw[gray!45,dashed,line width=0.3pt] (0.057,0.220) -- (-0.867,0.963);
  \draw[gray!45,dashed,line width=0.3pt] (0.057,0.220) -- (1.000,0.220);
  \filldraw[fill=gA!32,draw=gA!80!black,line width=0.5pt] (-0.057,-0.220) -- (0.867,-0.963) -- (1.000,0.220) -- (0.076,0.963) -- cycle;
  \filldraw[fill=gA!42,draw=gA!80!black,line width=0.5pt] (-1.000,-0.220) -- (-0.057,-0.220) -- (0.076,0.963) -- (-0.867,0.963) -- cycle;
  \filldraw[fill=gA!40,draw=gA!80!black,line width=0.5pt] (-0.076,-0.963) -- (0.867,-0.963) -- (-0.057,-0.220) -- (-1.000,-0.220) -- cycle;
\end{tikzpicture}}

\newcommand{\zonoPaw}{%
\begin{tikzpicture}[x=1cm,y=1cm,line join=round,baseline=(current bounding box.center)]
  \draw[gray!45,dashed,line width=0.3pt] (0.035,-0.113) -- (-0.064,-1.000);
  \draw[gray!45,dashed,line width=0.3pt] (0.035,-0.113) -- (-0.657,0.443);
  \draw[gray!45,dashed,line width=0.3pt] (0.035,-0.113) -- (0.742,-0.113);
  \draw[gray!45,dashed,line width=0.3pt] (-0.657,0.443) -- (-0.757,-0.443);
  \draw[gray!45,dashed,line width=0.3pt] (-0.657,0.443) -- (-0.643,1.000);
  \filldraw[fill=gA!14,draw=gA!80!black,line width=0.5pt] (0.657,-0.443) -- (0.643,-1.000) -- (0.742,-0.113) -- (0.757,0.443) -- cycle;
  \filldraw[fill=gA!32,draw=gA!80!black,line width=0.5pt] (-0.035,0.113) -- (0.657,-0.443) -- (0.757,0.443) -- (0.064,1.000) -- cycle;
  \filldraw[fill=gA!42,draw=gA!80!black,line width=0.5pt] (-0.742,0.113) -- (-0.035,0.113) -- (0.064,1.000) -- (-0.643,1.000) -- cycle;
  \filldraw[fill=gA!40,draw=gA!80!black,line width=0.5pt] (-0.064,-1.000) -- (0.643,-1.000) -- (0.657,-0.443) -- (-0.035,0.113) -- (-0.742,0.113) -- (-0.757,-0.443) -- cycle;
\end{tikzpicture}}

\newcommand{\zonoCycle}{%
\begin{tikzpicture}[x=1cm,y=1cm,line join=round,baseline=(current bounding box.center)]
  \draw[gray!45,dashed,line width=0.3pt] (0.096,0.372) -- (-0.016,-0.628);
  \draw[gray!45,dashed,line width=0.3pt] (0.096,0.372) -- (-0.685,1.000);
  \draw[gray!45,dashed,line width=0.3pt] (0.096,0.372) -- (0.893,0.372);
  \draw[gray!45,dashed,line width=0.3pt] (-0.016,-0.628) -- (-0.797,0.000);
  \draw[gray!45,dashed,line width=0.3pt] (-0.016,-0.628) -- (0.781,-0.628);
  \draw[gray!45,dashed,line width=0.3pt] (-0.016,-0.628) -- (-0.112,-1.000);
  \draw[gray!45,dashed,line width=0.3pt] (-0.685,1.000) -- (-0.797,0.000);
  \draw[gray!45,dashed,line width=0.3pt] (-0.797,0.000) -- (-0.893,-0.372);
  \filldraw[fill=gA!31,draw=gA!80!black,line width=0.5pt] (-0.781,0.628) -- (0.016,0.628) -- (0.112,1.000) -- (-0.685,1.000) -- cycle;
  \filldraw[fill=gA!20,draw=gA!80!black,line width=0.5pt] (0.797,0.000) -- (0.893,0.372) -- (0.112,1.000) -- (0.016,0.628) -- cycle;
  \filldraw[fill=gA!14,draw=gA!80!black,line width=0.5pt] (0.685,-1.000) -- (0.781,-0.628) -- (0.893,0.372) -- (0.797,0.000) -- cycle;
  \filldraw[fill=gA!42,draw=gA!80!black,line width=0.5pt] (-0.893,-0.372) -- (-0.096,-0.372) -- (0.016,0.628) -- (-0.781,0.628) -- cycle;
  \filldraw[fill=gA!32,draw=gA!80!black,line width=0.5pt] (-0.096,-0.372) -- (0.685,-1.000) -- (0.797,0.000) -- (0.016,0.628) -- cycle;
  \filldraw[fill=gA!40,draw=gA!80!black,line width=0.5pt] (-0.112,-1.000) -- (0.685,-1.000) -- (-0.096,-0.372) -- (-0.893,-0.372) -- cycle;
\end{tikzpicture}}

\newcommand{\zonoDiamond}{%
\begin{tikzpicture}[x=1cm,y=1cm,line join=round,baseline=(current bounding box.center)]
  \draw[gray!45,dashed,line width=0.3pt] (0.370,-0.198) -- (-0.235,0.288);
  \draw[gray!45,dashed,line width=0.3pt] (0.370,-0.198) -- (-0.321,-0.486);
  \draw[gray!45,dashed,line width=0.3pt] (0.370,-0.198) -- (0.987,-0.198);
  \draw[gray!45,dashed,line width=0.3pt] (-0.235,0.288) -- (-0.926,0.000);
  \draw[gray!45,dashed,line width=0.3pt] (-0.235,0.288) -- (-0.222,0.774);
  \draw[gray!45,dashed,line width=0.3pt] (-0.321,-0.486) -- (-0.926,0.000);
  \draw[gray!45,dashed,line width=0.3pt] (-0.321,-0.486) -- (-0.395,-0.774);
  \draw[gray!45,dashed,line width=0.3pt] (-0.926,0.000) -- (-0.913,0.486);
  \draw[gray!45,dashed,line width=0.3pt] (-0.926,0.000) -- (-1.000,-0.288);
  \filldraw[fill=gA!20,draw=gA!80!black,line width=0.5pt] (0.926,0.000) -- (1.000,0.288) -- (0.395,0.774) -- (0.321,0.486) -- cycle;
  \filldraw[fill=gA!14,draw=gA!80!black,line width=0.5pt] (0.913,-0.486) -- (0.987,-0.198) -- (1.000,0.288) -- (0.926,0.000) -- cycle;
  \filldraw[fill=gA!27,draw=gA!80!black,line width=0.5pt] (0.222,-0.774) -- (0.913,-0.486) -- (0.926,0.000) -- (0.235,-0.288) -- cycle;
  \filldraw[fill=gA!32,draw=gA!80!black,line width=0.5pt] (0.235,-0.288) -- (0.926,0.000) -- (0.321,0.486) -- (-0.370,0.198) -- cycle;
  \filldraw[fill=gA!31,draw=gA!80!black,line width=0.5pt] (-0.913,0.486) -- (-0.987,0.198) -- (-0.370,0.198) -- (0.321,0.486) -- (0.395,0.774) -- (-0.222,0.774) -- cycle;
  \filldraw[fill=gA!40,draw=gA!80!black,line width=0.5pt] (-0.395,-0.774) -- (0.222,-0.774) -- (0.235,-0.288) -- (-0.370,0.198) -- (-0.987,0.198) -- (-1.000,-0.288) -- cycle;
\end{tikzpicture}}

\newcommand{\zonoComplete}{%
\begin{tikzpicture}[x=1cm,y=1cm,line join=round,baseline=(current bounding box.center)]
  \draw[gray!45,dashed,line width=0.3pt] (0.356,0.163) -- (0.281,-0.504);
  \draw[gray!45,dashed,line width=0.3pt] (0.356,0.163) -- (-0.165,0.581);
  \draw[gray!45,dashed,line width=0.3pt] (0.356,0.163) -- (0.888,0.163);
  \draw[gray!45,dashed,line width=0.3pt] (0.281,-0.504) -- (-0.314,-0.752);
  \draw[gray!45,dashed,line width=0.3pt] (0.281,-0.504) -- (0.813,-0.504);
  \draw[gray!45,dashed,line width=0.3pt] (-0.165,0.581) -- (-0.760,0.333);
  \draw[gray!45,dashed,line width=0.3pt] (-0.165,0.581) -- (-0.154,1.000);
  \draw[gray!45,dashed,line width=0.3pt] (-0.314,-0.752) -- (-0.835,-0.333);
  \draw[gray!45,dashed,line width=0.3pt] (-0.314,-0.752) -- (-0.378,-1.000);
  \draw[gray!45,dashed,line width=0.3pt] (-0.760,0.333) -- (-0.835,-0.333);
  \draw[gray!45,dashed,line width=0.3pt] (-0.760,0.333) -- (-0.749,0.752);
  \draw[gray!45,dashed,line width=0.3pt] (-0.835,-0.333) -- (-0.899,-0.581);
  \filldraw[fill=gA!31,draw=gA!80!black,line width=0.5pt] (-0.749,0.752) -- (-0.813,0.504) -- (-0.281,0.504) -- (0.314,0.752) -- (0.378,1.000) -- (-0.154,1.000) -- cycle;
  \filldraw[fill=gA!20,draw=gA!80!black,line width=0.5pt] (0.835,0.333) -- (0.899,0.581) -- (0.378,1.000) -- (0.314,0.752) -- cycle;
  \filldraw[fill=gA!14,draw=gA!80!black,line width=0.5pt] (0.760,-0.333) -- (0.749,-0.752) -- (0.813,-0.504) -- (0.888,0.163) -- (0.899,0.581) -- (0.835,0.333) -- cycle;
  \filldraw[fill=gA!27,draw=gA!80!black,line width=0.5pt] (0.154,-1.000) -- (0.749,-0.752) -- (0.760,-0.333) -- (0.165,-0.581) -- cycle;
  \filldraw[fill=gA!42,draw=gA!80!black,line width=0.5pt] (-0.888,-0.163) -- (-0.356,-0.163) -- (-0.281,0.504) -- (-0.813,0.504) -- cycle;
  \filldraw[fill=gA!40,draw=gA!80!black,line width=0.5pt] (-0.378,-1.000) -- (0.154,-1.000) -- (0.165,-0.581) -- (-0.356,-0.163) -- (-0.888,-0.163) -- (-0.899,-0.581) -- cycle;
  \filldraw[fill=gA!32,draw=gA!80!black,line width=0.5pt] (-0.356,-0.163) -- (0.165,-0.581) -- (0.760,-0.333) -- (0.835,0.333) -- (0.314,0.752) -- (-0.281,0.504) -- cycle;
\end{tikzpicture}}

%% file: hstar-refs.bib
@article{larson2024k,
    author = {Larson, Matt and Li, Shiyue and Payne, Sam and Proudfoot, Nicholas},
    title = {{$K$}-rings of wonderful varieties and matroids},
    journal = {Adv. Math.},
    volume = {441},
    year = {2024},
    pages = {Paper No. 109554},
    doi = {10.1016/j.aim.2024.109554}
}

@article{ardila2010matroid,
    author = {Ardila, Federico and Benedetti, Carolina and Doker, Jeffrey},
    title = {Matroid polytopes and their volumes},
    journal = {Discrete Comput. Geom.},
    volume = {43},
    number = {4},
    year = {2010},
    pages = {841--854},
    doi = {10.1007/s00454-009-9232-9}
}

@article{backman2024simplicial,
    author = {Backman, Spencer and Eur, Christopher and Simpson, Connor},
    title = {Simplicial generation of {C}how rings of matroids},
    journal = {J. Eur. Math. Soc. (JEMS)},
    volume = {26},
    year = {2024},
    number = {11},
    pages = {4491--4535}
}

@article{beck2019h,
    author = {Beck, Matthias and Jochemko, Katharina and McCullough, Emily},
    title = {$h^*$-polynomials of zonotopes},
    journal = {Trans. Amer. Math. Soc.},
    volume = {371},
    year = {2019},
    pages = {2021--2042}
}

@misc{beck2026matroidal,
    author = {Beck, Matthias and Klivans, Caroline and Ross, Dustin},
    title = {A matroidal twist on a formula of {B}rion},
    year = {2026},
    note = {arXiv:2604.15253}
}

@book{beck2015computing,
    author = {Beck, Matthias and Robins, Sinai},
    title = {Computing the Continuous Discretely: Integer-Point Enumeration in Polyhedra},
    series = {Undergraduate Texts in Mathematics},
    edition = {Second},
    publisher = {Springer, New York},
    year = {2015}
}

@article{branden2006linear,
    author = {Br{\"a}nd{\'e}n, Petter},
    title = {On linear transformations preserving the {P}\'olya frequency property},
    journal = {Trans. Amer. Math. Soc.},
    volume = {358},
    number = {8},
    year = {2006},
    pages = {3697--3716}
}

@article{brenti1994q,
    author = {Brenti, Francesco},
    title = {$q$-{E}ulerian polynomials arising from {C}oxeter groups},
    journal = {European J. Combin.},
    volume = {15},
    year = {1994},
    pages = {417--441}
}

@incollection{brion2003multiplicity,
    author = {Brion, Michel},
    title = {Multiplicity-free subvarieties of flag varieties},
    booktitle = {Commutative algebra ({G}renoble/{L}yon, 2001)},
    series = {Contemp. Math.},
    volume = {331},
    publisher = {Amer. Math. Soc.},
    year = {2003},
    pages = {13--23}
}

@article{castillo2020when,
    author = {Castillo, Federico and Cid-Ruiz, Yairon and Li, Binglin and Monta{\~n}o, Jonathan and Zhang, Naizhen},
    title = {When are multidegrees positive?},
    journal = {Adv. Math.},
    volume = {374},
    year = {2020},
    pages = {Paper No. 107382, 34}
}

@book{cox2011toric,
    author = {Cox, David A. and Little, John B. and Schenck, Henry K.},
    title = {Toric Varieties},
    series = {Graduate Studies in Mathematics},
    volume = {124},
    publisher = {American Mathematical Society, Providence, RI},
    year = {2011}
}

@article{deconcini1995wonderful,
    author = {De Concini, C. and Procesi, C.},
    title = {Wonderful models of subspace arrangements},
    journal = {Selecta Math. (N.S.)},
    volume = {1},
    number = {3},
    year = {1995},
    pages = {459--494}
}

@incollection{edmonds1970submodular,
    author = {Edmonds, Jack},
    title = {Submodular functions, matroids, and certain polyhedra},
    booktitle = {Combinatorial Structures and their Applications (Proc. Calgary Internat. Conf., 1969)},
    publisher = {Gordon and Breach},
    address = {New York},
    year = {1970},
    pages = {69--87}
}

@misc{eur2023k,
    author = {Eur, Christopher and Larson, Matt},
    title = {{$K$}-theoretic positivity for matroids},
    year = {2023},
    note = {To appear in Algebraic Geometry; arXiv:2311.11996}
}

@misc{fedorov1885nachala,
    author = {Fedorov, E. S.},
    title = {Nachala ucheniya o figurakh},
    year = {1885},
    note = {Elements of the theory of figures; reprinted, Izdat. Akad. Nauk SSSR, Moscow, 1953}
}

@article{ferroni2024examples,
    author = {Ferroni, Luis and Higashitani, Akihiro},
    title = {Examples and counterexamples in {E}hrhart theory},
    journal = {EMS Surv. Math. Sci.},
    year = {2024},
    note = {Published online first; arXiv:2307.10852},
    doi = {10.4171/EMSS/86}
}

@article{herzog2002discrete,
    author = {Herzog, J{\"u}rgen and Hibi, Takayuki},
    title = {Discrete polymatroids},
    journal = {J. Algebraic Combin.},
    volume = {16},
    number = {3},
    year = {2002},
    pages = {239--268}
}

@article{nakai1963criterion,
    author = {Nakai, Yoshikazu},
    title = {A criterion of an ample sheaf on a projective scheme},
    journal = {Amer. J. Math.},
    volume = {85},
    year = {1963},
    pages = {14--26}
}

@article{kleiman1966ampleness,
    author = {Kleiman, Steven L.},
    title = {Toward a numerical theory of ampleness},
    journal = {Ann. of Math. (2)},
    volume = {84},
    year = {1966},
    number = {3},
    pages = {293--344}
}

@misc{konoike2025magic,
    author = {Konoike, Masato},
    title = {On the magic positivity of {E}hrhart polynomials of dilated polytopes},
    year = {2025},
    note = {arXiv:2504.21395}
}

@article{lee1997formula,
    author = {Lee, Yuan-Pin},
    title = {A formula for {E}uler characteristics of tautological line bundles on the {D}eligne--{M}umford moduli spaces},
    journal = {Internat. Math. Res. Notices},
    year = {1997},
    number = {8},
    pages = {393--400}
}

@book{oxley2011matroid,
    author = {Oxley, James},
    title = {Matroid theory},
    series = {Oxford Graduate Texts in Mathematics},
    volume = {21},
    edition = {Second},
    publisher = {Oxford University Press},
    year = {2011}
}

@article{pandharipande1997symmetric,
    author = {Pandharipande, Rahul},
    title = {The symmetric function $h^0(\overline{M}_{0,n}, L_1^{x_1} \otimes \cdots \otimes L_n^{x_n})$},
    journal = {J. Algebraic Geom.},
    volume = {6},
    year = {1997},
    number = {4},
    pages = {721--731}
}

@article{postnikov2009permutohedra,
    AUTHOR = {Postnikov, Alexander},
     TITLE = {Permutohedra, associahedra, and beyond},
   JOURNAL = {Int. Math. Res. Not. IMRN},
  FJOURNAL = {International Mathematics Research Notices. IMRN},
      YEAR = {2009},
    NUMBER = {6},
     PAGES = {1026--1106},
      ISSN = {1073-7928,1687-0247},
   MRCLASS = {05E30},
  MRNUMBER = {2487491},
       DOI = {10.1093/imrn/rnn153},
       URL = {https://doi.org/10.1093/imrn/rnn153},
}

@article{reiner1997noncrossing,
    author = {Reiner, Victor},
    title = {Non-crossing partitions for classical reflection groups},
    journal = {Discrete Math.},
    volume = {177},
    year = {1997},
    pages = {195--222}
}

@article{savage2015s,
    author = {Savage, Carla D. and Visontai, Mirk\'o},
    title = {The $\mathbf{s}$-{E}ulerian polynomials have only real roots},
    journal = {Trans. Amer. Math. Soc.},
    volume = {367},
    year = {2015},
    pages = {1441--1466}
}

@article{shephard1974combinatorial,
    author = {Shephard, G. C.},
    title = {Combinatorial properties of associated zonotopes},
    journal = {Canad. J. Math.},
    volume = {26},
    year = {1974},
    pages = {302--321}
}

@article{snapper1959multiples,
    author = {Snapper, Ernst},
    title = {Multiples of divisors},
    journal = {J. Math. Mech.},
    volume = {8},
    number = {6},
    year = {1959},
    pages = {967--992}
}

@article{snapper1960polynomials,
    author = {Snapper, Ernst},
    title = {Polynomials associated with divisors},
    journal = {J. Math. Mech.},
    volume = {9},
    number = {1},
    year = {1960},
    pages = {123--139}
}

@article{simion2003type,
    author = {Simion, Rodica},
    title = {A type-{B} associahedron},
    journal = {Adv. in Appl. Math.},
    volume = {30},
    year = {2003},
    number = {1-2},
    pages = {2--25}
}

@book{szego1975orthogonal,
    author = {Szeg\H{o}, G\'abor},
    title = {Orthogonal Polynomials},
    series = {American Mathematical Society Colloquium Publications},
    volume = {23},
    edition = {Fourth},
    publisher = {American Mathematical Society, Providence, RI},
    year = {1975}
}

@incollection{stanley1991zonotope,
    author = {Stanley, Richard P.},
    title = {A zonotope associated with graphical degree sequences},
    booktitle = {Applied Geometry and Discrete Mathematics},
    series = {DIMACS Ser. Discrete Math. Theoret. Comput. Sci.},
    volume = {4},
    publisher = {Amer. Math. Soc.},
    year = {1991},
    pages = {555--570}
}

@phdthesis{steingrmsson1992permutation,
    author = {Steingr{\'\i}msson, Einar},
    title = {Permutation statistics of indexed and poset permutations},
    school = {Massachusetts Institute of Technology},
    year = {1992}
}

@article{wagner1992total,
    author = {Wagner, David G.},
    title = {Total positivity of {H}adamard products},
    journal = {J. Math. Anal. Appl.},
    volume = {163},
    year = {1992},
    pages = {459--483}
}

@book{ziegler1995lectures,
    author = {Ziegler, G{\"u}nter M.},
    title = {Lectures on Polytopes},
    series = {Graduate Texts in Mathematics},
    volume = {152},
    publisher = {Springer-Verlag, New York},
    year = {1995}
}

@incollection{branden2015unimodality,
    author = {Br{\"a}nd{\'e}n, Petter},
    title = {Unimodality, log-concavity, real-rootedness and beyond},
    booktitle = {Handbook of Enumerative Combinatorics},
    editor = {B{\'o}na, Mikl{\'o}s},
    series = {Discrete Math. Appl. (Boca Raton)},
    publisher = {CRC Press, Boca Raton, FL},
    year = {2015},
    pages = {437--483}
}

@article{silversmith2022cross,
    author = {Silversmith, Rob},
    title = {Cross-ratio degrees and perfect matchings},
    journal = {Proc. Amer. Math. Soc.},
    volume = {150},
    number = {12},
    year = {2022},
    pages = {5057--5072}
}

@article{brakensiek2025kapranov,
    author = {Brakensiek, Joshua and Eur, Christopher and Larson, Matt and Li, Shiyue},
    title = {{K}apranov degrees},
    journal = {Int. Math. Res. Not. IMRN},
    volume = {2025},
    number = {20},
    year = {2025},
    pages = {Paper No. rnaf306}
}

@article{dilworth1944dependence,
    author = {Dilworth, R. P.},
    title = {Dependence relations in a semi-modular lattice},
    journal = {Duke Math. J.},
    volume = {11},
    number = {3},
    year = {1944},
    pages = {575--587},
    doi = {10.1215/S0012-7094-44-01150-6}
}

@incollection{kung1990dilworth,
    author = {Kung, Joseph P. S.},
    title = {{D}ilworth truncations of geometric lattices},
    booktitle = {The {D}ilworth theorems: selected papers of {R}obert {P}. {D}ilworth},
    editor = {Bogart, Kenneth P. and Freese, Ralph and Kung, Joseph P. S.},
    series = {Contemporary Mathematicians},
    publisher = {Birkh\"auser},
    address = {Boston},
    year = {1990},
    pages = {295--297}
}

@book{crapo1970foundations,
    author = {Crapo, Henry H. and Rota, Gian-Carlo},
    title = {On the foundations of combinatorial theory: combinatorial geometries},
    edition = {Preliminary},
    publisher = {M.I.T. Press},
    address = {Cambridge, Mass.},
    year = {1970}
}

@incollection{mason1977matroids,
    author = {Mason, John H.},
    title = {Matroids as the study of geometrical configurations},
    booktitle = {Higher Combinatorics (Proc. NATO Advanced Study Inst., Berlin, 1976)},
    editor = {Aigner, Martin},
    publisher = {Reidel},
    address = {Dordrecht},
    year = {1977},
    pages = {133--176}
}

@book{stanley1996combinatorics,
    author = {Stanley, Richard P.},
    title = {Combinatorics and commutative algebra},
    edition = {Second},
    series = {Progress in Mathematics},
    volume = {41},
    publisher = {Birkh\"auser},
    address = {Boston},
    year = {1996}
}

@article{provan1980decompositions,
  title={Decompositions of simplicial complexes related to diameters of convex polyhedra},
  author={Provan, J Scott and Billera, Louis J},
  journal={Mathematics of Operations Research},
  volume={5},
  number={4},
  pages={576--594},
  year={1980},
  publisher={INFORMS}
}

@article{feichtner2004chow,
    author = {Feichtner, Eva Maria and Yuzvinsky, Sergey},
    title = {Chow rings of toric varieties defined by atomic lattices},
    journal = {Invent. Math.},
    volume = {155},
    number = {3},
    year = {2004},
    pages = {515--536},
    doi = {10.1007/s00222-003-0327-2}
}

@book{bjorner1999oriented,
    author = {Bj\"orner, Anders and Las Vergnas, Michel and Sturmfels, Bernd and White, Neil and Ziegler, G\"unter M.},
    title = {Oriented Matroids},
    series = {Encyclopedia of Mathematics and its Applications},
    volume = {46},
    edition = {Second},
    publisher = {Cambridge University Press, Cambridge},
    year = {1999}
}

@incollection{katz2011realization,
    author = {Katz, Eric and Payne, Sam},
    title = {Realization spaces for tropical fans},
    booktitle = {Combinatorial aspects of commutative algebra and algebraic geometry},
    series = {Abel Symp.},
    volume = {6},
    publisher = {Springer, Berlin},
    year = {2011},
    pages = {73--88},
    doi = {10.1007/978-3-642-19492-4_6}
}

@article{brion1988points,
    author = {Brion, Michel},
    title = {Points entiers dans les poly\`edres convexes},
    journal = {Ann. Sci. \'Ecole Norm. Sup. (4)},
    volume = {21},
    number = {4},
    year = {1988},
    pages = {653--663}
}

@article{li2018images,
    author = {Li, Binglin},
    title = {Images of rational maps of projective spaces},
    journal = {Int. Math. Res. Not. IMRN},
    year = {2018},
    number = {13},
    pages = {4190--4228}
}

@incollection{kapranov1993chow,
    author = {Kapranov, M. M.},
    title = {Chow quotients of {G}rassmannians. {I}},
    booktitle = {I. {M}. {G}el'fand {S}eminar},
    series = {Adv. Soviet Math.},
    volume = {16},
    publisher = {Amer. Math. Soc., Providence, RI},
    year = {1993},
    pages = {29--110}
}

@misc{eurfinklarson2025,
    author = {Eur, Christopher and Fink, Alex and Larson, Matt},
    title = {Vanishing theorems for combinatorial geometries},
    year = {2025},
    note = {arXiv:2510.05207},
    url = {https://arxiv.org/abs/2510.05207}
}

@misc{eur2026problems,
    author = {Eur, Christopher},
    title = {Problems on {$h^\ast$}-vectors for matroids},
    year = {2026},
    note = {Problem list, updated August 1, 2026},
    howpublished = {\url{https://www.math.cmu.edu/~ceur/pdf/Eur_hstar-matroid-problems.pdf}},
    url = {https://www.math.cmu.edu/~ceur/pdf/Eur_hstar-matroid-problems.pdf}
}

@misc{crowley2026cohomological,
    author = {Crowley, Colin and Larson, Matt},
    title = {Cohomological aspects of power ideals},
    year = {2026},
    note = {arXiv:2604.06601},
    url = {https://arxiv.org/abs/2604.06601}
}

@article{holtz2011zonotopal,
    author = {Holtz, Olga and Ron, Amos},
    title = {Zonotopal algebra},
    journal = {Adv. Math.},
    volume = {227},
    year = {2011},
    number = {2},
    pages = {847--894}
}

@article{ardila2010combinatorics,
    author = {Ardila, Federico and Postnikov, Alexander},
    title = {Combinatorics and geometry of power ideals},
    journal = {Trans. Amer. Math. Soc.},
    volume = {362},
    year = {2010},
    number = {8},
    pages = {4357--4384},
    doi = {10.1090/S0002-9947-10-05018-X}
}

@article{holtz2012hierarchical,
    author = {Holtz, Olga and Ron, Amos and Xu, Zhiqiang},
    title = {Hierarchical zonotopal spaces},
    journal = {Trans. Amer. Math. Soc.},
    volume = {364},
    year = {2012},
    number = {2},
    pages = {745--766}
}

@article{lenz2012hierarchical,
    author = {Lenz, Matthias},
    title = {Hierarchical zonotopal power ideals},
    journal = {European J. Combin.},
    volume = {33},
    year = {2012},
    number = {6},
    pages = {1120--1141}
}

@misc{avila2026luck,
    author = {Avila, Nicolas and Ferroni, Luis and Morales, Alejandro H.},
    title = {Luck and magic for {P}itman--{S}tanley polytopes and parking functions},
    year = {2026},
    note = {arXiv:2603.19194},
    url = {https://arxiv.org/abs/2603.19194}
}
